\newif\ifJ
\titleformat*{\section}{\large\bfseries}
\titleformat*{\subsection}{\bfseries}
\numberwithin{equation}{section}
\newtheorem{theorem}{Theorem}[section]{\bfseries}{\it}
\newtheorem{proposition}[theorem]{Proposition}{\bfseries}{\it}
\newtheorem{lemma}[theorem]{Lemma}{\bfseries}{\it}
\newtheorem{corollary}[theorem]{Corollary}{\bfseries}{\it}
{\bfseries}{\it}
\newtheorem{example}{Example}[section]{\bfseries}{\rmfamily}
\newtheorem{assumption}[theorem]{Assumption}{\bfseries}{\it}
\theoremstyle{definition}
\newtheorem{remark}{Remark}[section]{\bfseries}{\rmfamily}
\newcommand{\R}{\mathbb{R}}
\newcommand{\N}{\mathbb{N}}
\DeclareMathOperator{\supp}{supp}
\DeclareMathOperator{\diam}{diam}
\DeclareMathOperator{\argmax}{arg\,max}
\DeclareMathOperator{\Trace}{Tr}
\DeclareMathOperator{\Conv}{conv}
\newcommand{\abs}[1]{\lvert#1\rvert}
\newcommand{\norm}[1]{\left\lVert#1\right\rVert}
\newcommand{\pair}[2]{\langle#1,#2\rangle}
\newcommand{\Rdd}{\R^{d\times d}}
\newcommand{\Rddsym}{\R^{d\times d}_{\mathrm{sym}}}
\newcommand{\Rddsymp}{\R^{d\times d}_{\mathrm{sym},+}}
\newcommand{\dx}{\mathrm{d}x}
\newcommand{\Cclass}{\mathcal{C}(\underline{\nu},\overline{\nu},\varepsilon)}
\newcommand{\mbb}[1]{\mathbb{#1}}
\newcommand{\angled}[1]{\left \langle #1 \right\rangle}
\newcommand{\TITLESTRING}{Fully nonlinear second-order mean field games with nondifferentiable Hamiltonians}
\newcommand{\ABSTRACTSTRING}{We analyse fully nonlinear second-order mean field games (MFG) with nondifferentiable Hamiltonians, which take the form of a coupled system of a fully nonlinear Hamilton--Jacobi--Bellman equation and a Kolmogorov--Fokker--Planck partial differential inclusion (PDI) featuring the set-valued subdifferential of the Hamiltonian.
We show the existence of solutions of some stationary MFG systems with quite general coupling operators and nonnegative distributional source terms, on general bounded convex domains, under the primary assumptions of uniform ellipticity and the Cordes condition on the diffusion coefficient.
The existence proof is founded on an original, and equivalent, reformulation of the PDI as a nonstandard variational inequality (VI), that offers significant flexibility in passages to limits.
Furthermore, the uniqueness of the solution of the PDI/VI system is proved in the case of strictly monotone couplings.
We then show how the MFG PDI/VI system in the fully nonlinear setting can be obtained as the limit of a sequence of PDE systems with differentiable Hamiltonians, and we give further results on the continuous dependence of the solution.\medskip 

\noindent\textbf{Mathematics Subject Classification.} 35J57, 35Q89, 49N80, 49J53.
}
\begin{document}

\ifJ

\else

\title{\TITLESTRING}
\author{Thomas Sales\footnotemark[1] ~and Iain Smears\footnotemark[2]}

\maketitle

\renewcommand{\thefootnote}{\fnsymbol{footnote}}

\footnotetext[1]{Department of Mathematics, University of Sussex, BN1 9RF Brighton, United Kingdom, (\texttt{t.p.sales@sussex.ac.uk})}

\footnotetext[2]{Department of Mathematics, University College London, Gower
Street, WC1E 6BT London, United Kingdom (\texttt{i.smears@ucl.ac.uk}).}

\begin{abstract}
\ABSTRACTSTRING
\end{abstract}

\fi

\section{Introduction}\label{section: introduction}
 
\subsection{Fully nonlinear mean field games}
Mean field games (MFG), introduced by Lasry \& Lions~\cite{LasryLions2006i,LasryLions2006ii,LasryLions2007} and, independently, by Huang, Malham\'e \& Caines \cite{HuangMalhameCaines2006}, are models of the Nash equilibria of dynamic differential games for large populations of players.
These can be described by a system of coupled nonlinear partial differential equations (PDE), including a Hamilton--Jacobi--Bellman equation (HJB), originating from the underlying optimal control problem of a typical player, and a Kolmogorov--Fokker--Planck (KFP) equation, describing the evolution of the density of players under the players' controlled stochastic dynamics.
In models where the players' controls only enter the deterministic part of the stochastic dynamics, with nondegenerate noise, the resulting MFG system consists of second-order quasilinear PDE.
We refer the reader to~\cite{AchdouCardaliaguetDelaruePorettaSantambrogio2020} for an introduction to the quasilinear setting.
However, in the more general case where the controls can also enter the diffusion terms, then the resulting PDE becomes fully nonlinear.

Consider, as model problems, fully nonlinear second-order elliptic MFG systems of the form
\begin{subequations}\label{eqn:mfg_pde_system}
\begin{alignat}{2}
H(x,D^2 u) &= F[m](x) &\quad&\text{in }\Omega, \label{eqn: stationary MFG1}
 \\ D^2:\left(m\frac{\partial H}{\partial M}(x,D^2 u)\right)&= G(x) &\quad&\text{in }\Omega, \label{eqn: stationary MFG2}
\\
u=0, \quad m&=0 &\quad&\text{on }\partial \Omega,
\end{alignat}
\end{subequations}
where $\Omega\subset \R^d$, $d\geq 1$, denotes the state space of the game, where the function $u\colon \Omega\to \R$ denotes the value function, and where $m\colon \Omega\to \R$  denotes the density of players.
In the analysis below, we will assume that the domain $\Omega$ is open, bounded and convex.
This will imply that the boundary $\partial\Omega$ will be at least Lipschitz regular~\cite[Corollary~1.2.2.3]{Grisvard2011}, but we will not make any further smoothness assumptions on $\partial\Omega$.
The Hamiltonian~$H\colon \Omega\times \Rdd\to \R$, which appears above, is defined in terms of the underlying stochastic optimal control problem by
\begin{equation}\label{eqn:H_def}
\begin{aligned}
    H(x,M)\coloneqq \sup_{\alpha\in\mathcal{A}}\left\{-a(x,\alpha):M - f(x,\alpha) \right\}
    &&& \forall (x,M)\in \Omega\times\Rddsym,
\end{aligned}
\end{equation}
where $\mathcal{A}$ denotes the set of player controls, where $a\colon \overline{\Omega}\times \mathcal{A}\to \Rddsymp$ is the  controlled diffusion tensor, and where the function $f\colon \overline{\Omega}\times \mathcal{A}\to \R$ is the control-dependent component of the players cost. 
Here, $\Rddsym$ denotes the space of real $d\times d$ symmetric matrices and $\Rddsymp$ is the subset of matrices in $\Rddsym$ that are additionally positive semidefinite.
To explain the notation in~\eqref{eqn: stationary MFG2}, the partial derivative $\frac{\partial H}{\partial M}$ refers to the matrix-valued partial derivative of $H$ with respect to its second argument, with values in $\Rddsym$, and $D^2:$ denotes the double divergence operator, i.e.\ $D^2:W\coloneqq\sum_{i,j=1}^d\partial_{x_i x_j}W_{ij}$ for all sufficiently regular matrix-valued functions $W\colon \Omega\to\Rddsym$. 
Note that in order to write~\eqref{eqn: stationary MFG2}, it is very often assumed that $H$ is differentiable in $M$.
However, we shall avoid differentiability assumptions and instead allow $H$ to be nondifferentiable in $M$, with the only assumptions on the data in~\eqref{eqn:H_def} being that $\mathcal{A}$ is a given compact metric space, and that the functions $a$ and $f$ are uniformly continuous over $\overline{\Omega}\times\mathcal{A}$.
We will explain in much more detail below how the system~\eqref{eqn:mfg_pde_system} should be understood when $H$ is not required to be differentiable in $M$.
The coupling term~$F$ is the part of the players' cost functional that depends on the overall density of players; in the analysis we will allow it to be a possibly nonlinear, nonlocal operator mapping~$L^2(\Omega)$ into itself.
Regarding the boundary condition,  we consider for simplicity homogeneous Dirichlet boundary conditions for $u$ and $m$ in the system~\eqref{eqn:mfg_pde_system} above. Dirichlet boundary conditions arise when the underlying stochastic optimal control problem of the players involves stopping at the first-exit time from $\Omega$, i.e.\ players exit the game upon reaching the boundary~$\partial \Omega$. 
The source term $G$ appearing on the right-hand side of~\eqref{eqn: stationary MFG2} represents the source of new players entering the game.
Naturally, the problem~\eqref{eqn:mfg_pde_system} can be seen as a simplified steady-state system motivated by a corresponding time-dependent problem.

At present, there are only a handful of papers on fully nonlinear MFG.
Andrade and Pimentel~\cite{AndradePimentel2021} have studied some particular fully nonlinear MFG systems of potential type, where the system can be derived from the minimization of a global functional for the Hessian of the value function.
In~\cite{ChowdhuryJakobsenKrupski2024,ChowdhuryJakobsenKrupski2025}, Chowdhury, Jakobsen \& Krupski have analysed time-dependent problems where the Hamiltonian takes the form $H(\mathcal L u)$, where $H\in C^1(\R)$ and $\mathcal{L}$ is a possibly nonlocal diffusion operator tied to a L\'{e}vy process appearing in the stochastic dynamics; in the case of Brownian motion without deterministic drift, this reduces to a Hamiltonian of the form $H(\Delta u)$, cf.~\cite[Footnote~3, p.~6306]{ChowdhuryJakobsenKrupski2024}.
They have shown the existence of a classical--very-weak solution for the time-dependent system, for problems without boundary conditions, under suitable assumptions on the data.
In another recent work, Ignazio \& Ricciardi~\cite{IgnazioRicciardi2025} also consider problems on the whole space $\R^d$, also in the setting of classical--very-weak solutions where the fully nonlinear part of the Hamiltonian is also depending only on the Laplacian~$\Delta u$, rather than the full Hessian~$D^2 u$.
Ferreira and Gomes have also outlined in~\cite[Section~7.1]{FerreiraGomes2018} some possible approaches to showing existence of Minty weak solutions of fully nonlinear MFG if some global monotonicity structure is available, e.g. if the coupling operator $F$ above is assumed to be monotone.
 
The analysis of the system~\eqref{eqn:mfg_pde_system} presents several challenges that motivate a different approach to prior works.
First, the system is not generally of potential type.
The coupling operator is also not required to be monotone for the existence theory below.
Second, the Hamiltonian $H$, defined in~\eqref{eqn:H_def}, is allowed to depend on the full Hessian $D^2 u$, rather than only on its Laplacian $\Delta u$.
Third, the lack of smoothness of the boundary $\partial \Omega$ generally limits the possible regularity (up to the boundary) of the solution.
In addition to the above, the most significant challenge that we address here is that, in many applications, the Hamiltonian $H$ is generally not differentiable with respect to the Hessian variable~$M$.
As a very simple example, consider $\mathcal{A}=[0,1]$ and $a(x,\alpha)=(1+\alpha)I_d$, with $I_d$ the $d\times d$ identity matrix, and $f\equiv 0$, in which case $H(x,M)=\max\{-\Trace M,-2\Trace M\}$, which is clearly not differentiable for all $M$.
If $H$ is nondifferentiable, then it is clear that the term involving $\frac{\partial H}{\partial M}$ in the KFP equation~\eqref{eqn: stationary MFG2} might not exist as a well-defined function for general $u$, since the Hessian $D^2 u$ might take values at points of nondifferentiability of $H$ on a set of positive measure in $\Omega$. 
We stress that Lipschitz continuity of $H$ is not sufficient to avoid this difficulty.
Therefore, when $H$ is nondifferentiable, the system~\eqref{eqn:mfg_pde_system} must be understood in a suitably generalized sense, as we detail further below.

\subsection{MFG with nondifferentiable Hamiltonians and main results}

The analysis of MFG systems with nondifferentiable Hamiltonians has recently been developed in the sequence of papers~\cite{OsborneSmears2024i,OsborneSmears2024ii,OsborneSmears2025i,OsborneSmears2025ii,OsborneSmears2025iv,OsborneSmears2025iii} in the context of second-order quasilinear MFG systems.
It is well-known in optimal control that nondifferentiability of the Hamiltonian is related to the nonuniqueness of the optimal feedback controls for the underlying stochastic optimal control problem of the players.
The insight is that, in general, players occupying the same state may have to use different optimal feedback controls in order to sustain a Nash equilibrium. 
The question of determining the players' choices of controls then becomes implicitly part of the problem to be solved.
Mathematically, this is expressed by the generalization of the usual KFP equation as a \emph{partial differential inclusion} (PDI) where the partial derivative of the Hamiltonian is replaced by its set-valued subdifferential.
We refer the reader in particular to~\cite{OsborneSmears2025iii} for both a heuristic derivation in terms of the underlying stochastic optimal control problems, and also a more rigorous analysis in the quasilinear setting showing that the resulting MFG PDI system is, in some sense, the limit of classical MFG PDE systems under regularization of the Hamiltonian.

In the context of the present problem~\eqref{eqn:mfg_pde_system}, the appropriate generalization of the problem for nondifferentiable $H$ is in terms of the PDI system: find a pair of functions $(u,m)$, with appropriate regularity, such that
\begin{subequations}\label{eqn:mfg_pdi_system}
\begin{alignat}{2}
H(x,D^2 u) &= F[m](x) &\quad&\text{in }\Omega, \label{eqn:mfg_pdi_1}
\\ D^2:\left(-\overline{a} m\right) &= G(x) &\quad&\text{in }\Omega, \label{eqn:mfg_pdi_2}
\\
u=0, \quad m&=0 &\quad&\text{on }\partial \Omega,
\end{alignat}
for some measurable function $\overline{a}\colon \Omega\to \Rddsymp$ satisfying 
\begin{equation}\label{eq:mfg_pdi_system_inclusion}
\begin{aligned}
-\overline{a}(x)\in \partial_M H(x,D^2 u (x)) &&& \text{for a.e. } x\in\Omega,
\end{aligned}
\end{equation}
\end{subequations}
where $\partial_M H$ denotes the subdifferential of $H$.
We will make the functional setting and problem statement more precise in Section~\ref{sec:notation} below, in particular see~\eqref{eq:MFG_PDI} for the precise formulation of~\eqref{eqn:mfg_pdi_system}.
The equation~\eqref{eqn:mfg_pdi_2} and inclusion~\eqref{eq:mfg_pdi_system_inclusion} above can be jointly expressed, at least formally, as 
\begin{equation*}
\begin{aligned}
G \in D^2:\left( m \partial_M H(x,D^2 u) \right) &&&\text{in }\Omega.
\end{aligned}
\end{equation*}
Observe that the matter of determining a suitable $\overline{a}$ satisfying the inclusion condition~\eqref{eq:mfg_pdi_system_inclusion} is implicitly part of the problem to be solved, and such an $\overline{a}$ is not necessarily unique; cf.~\cite{OsborneSmears2025iii} for some examples in the quasilinear setting.
It is also known from examples in the quasilinear setting that discontinuities in the coefficients appearing in the inclusion can lead to loss of interior regularity of the solution, see e.g.~\cite[Example~2]{OsborneSmears2024i}.
Note also that the PDI problem~\eqref{eqn:mfg_pdi_system} simplifies to the usual PDE problem~\eqref{eqn:mfg_pde_system} if $H$ is differentiable everywhere in~$M$.

Our first main result, in Theorem~\ref{thm: MFG existence} below, shows the existence of solutions of the PDI system~\eqref{eqn:mfg_pdi_system}, under appropriate assumptions on the problem data.
More precisely, we will work with strong--very-weak solutions with value function $u\in H^2(\Omega)\cap H^1_0(\Omega)$ and density $m\in L^2(\Omega)$.
The precise assumptions on the problem data are given in Section~\ref{sec:notation} below.
In Theorem~\ref{thm:MFG_uniqueness} below, we also show the uniqueness of the solution if the coupling~$F$ is additionally strictly monotone, thus extending the well-known uniqueness result of Lasry \& Lions. 
We also show in Theorem~\ref{thm: regularization convergence} that the MFG PDI problem can be obtained as the limit of MFG PDE with regularized Hamiltonians, thus extending results of~\cite{OsborneSmears2025iii} to the fully nonlinear setting.

\subsection{Approach to the analysis and further contributions}
Let us now explain the motivation for several key features of our approach, and how these address the main challenges appearing in the analysis.
First, it is clear that the nondifferentiability of~$H$, and the resulting lack of good continuity properties of the set-valued subdifferential maps, are major obstacles in the analysis.
For example, in a typical situation encountered below, we may have a sequence of functions $u_j \in H^2(\Omega)\cap H^1_0(\Omega)$ converging to some limit $u$ in the norm of $H^2(\Omega)$, and also a sequence of functions $m_j\in L^2(\Omega)$ that converges only weakly to some limit $m$ in $L^2(\Omega)$, and we would like to pass to limits in the various terms appearing in~\eqref{eqn:mfg_pdi_system}.
A critical difficulty appears when considering some sequence of selections $\overline{a}_j\in L^\infty(\Omega;\Rddsymp)$ such that $-\overline{a}_j(x)\in\partial_M H(x,D^2u_j(x))$ for a.e.\ $x\in \Omega$, since it is easy to find examples where the lack of differentiability of $H$ causes the sequence $\{\overline{a}_j\}_j$ to be weakly-$*$ converging along some subsequences, but have no subsequence that is converging strongly.
Given only weak convergence, it is difficult to establish any convergence properties of the sequence of products~$\overline{a}_jm_j$ that appear in the principal term of the KFP equation. 
Note that this problem is more critical in the fully nonlinear case than in the quasilinear one treated in~\cite{OsborneSmears2024i,OsborneSmears2025i,OsborneSmears2025iv,OsborneSmears2025iii}, as in the latter case there is some additional regularity for the densities allowing for some strong compactness by the Rellich--Kondrachov theorem.

In order to address this difficulty, the first main ingredient of our approach is a reformulation of the PDI system, in particular the KFP inclusion, as a nonstandard variational inequality~(VI) problem, see Section~\ref{sec:notation} below, in particular \eqref{eq:MFG_VI}. 
The VI problem takes a rather original form, as the subdifferential $\partial_M H$ no longer makes an explicit appearance.
We emphasize that the VI formulation of the problem considered here differs substantially from the variational inequality problem in~\cite{FerreiraGomes2018}, which requires differentiability of the Hamiltonian; see Remark~\ref{rem:other_vi} below for a more detailed explanation of the essential differences.
One of the central contributions of this work, stated in Theorem~\ref{thm:PDI_VI_equivalence}, and its Corollary~\ref{cor:PDI_VI_equivalence_complete}, is that the PDI problem~\eqref{eq:MFG_PDI} and the VI problem~\eqref{eq:MFG_VI} are equivalent under quite weak conditions on the problem data. 
The proof of Theorem~\ref{thm:PDI_VI_equivalence} is essentially founded on the verification of a minimax principle for some functional defined on the test functions and the measurable selections of the subdifferential, see Section~\ref{section: equivalence proof} below.
The principal benefit of working with the VI formulation of the problem is that it greatly simplifies passing to limits in the various terms of the problem, thereby avoiding the difficulties described in the previous paragraph.
Note, however, that we do not work exclusively with the VI formulation, as certain key steps will make use also of the PDI form, so both points of view are complementary.
This forms the cornerstone of the proof of Theorem~\ref{thm: MFG existence} on the existence of solutions, as it enables the verification of the hypotheses of Kakutani's fixed point theorem for some suitably chosen set-valued map.
The VI formulation of the problem is also useful for handling various limiting arguments in Section~\ref{section: regularization} where we consider the effects of regularization and perturbation of the data.
More generally, it will become clear in the analysis below that the equivalence of PDI and VI formulations can be generalized to many other MFG systems, and not just the one considered here, cf.~Remark~\ref{remark: minimax theorems} below, so we expect that it may be a useful tool for a much wider range of problems.

The second key ingredient of our approach is motivated by another difficulty that appears for fully nonlinear MFG systems with nondifferentiable Hamiltonians.
The problem is that, in the KFP equation~\eqref{eqn:mfg_pdi_2}, the coefficient~$\overline{a}\in L^\infty(\Omega;\Rddsymp)$ will generally be discontinuous, regardless of the regularity of the value function~$u$.
Thus the KFP equation involves the formal adjoint of a second-order elliptic operator in nondivergence form with a discontinuous coefficient. 
It is well-known that the well-posedness of such equations is quite delicate: see~\cite[p.~18]{MaugeriPalagachevSoftova2000} for an example of nonuniqueness of strong solutions, and see also~\cite{Nadirashvili1997,Safonov1999}  for nonuniqueness of extended notions of solution, including viscosity solutions (with measurable ingredients).
We emphasize that these issues arise from the combination of discontinuity of the coefficient and the nondivergence structure of the second-order terms, in marked contrast to second-order elliptic equations in divergence form.
However, well-posedness of linear nondivergence form elliptic PDE with discontinuous coefficients (and their adjoints) can be recovered if the leading coefficients satisfy a Cordes condition~\cite{Cordes1956,MaugeriPalagachevSoftova2000}.
This motivates our assumption that the diffusion coefficients appearing in the Hamiltonian satisfy the Cordes condition, see~\eqref{eq:Cordes_condition} below for a precise statement. 
Note that in the case of problems in two space dimensions, i.e.\ $d=2$, uniform ellipticity implies the Cordes condition~\eqref{eq:Cordes_condition}, so the condition is only more restrictive than uniform ellipticity for $d\geq 3$.
As further motivation, it was shown by S\"uli and the second author in~\cite{SmearsSuli2013,SmearsSuli2014,SmearsSuli2016} that the Cordes condition also allows for an effective analysis of strong solutions of fully nonlinear HJB equations on nonsmooth convex domains, with good quantitative stability bounds in $H^2(\Omega)\cap H^1_0(\Omega)$.
Problems with Cordes coefficients have since been studied in other contexts, e.g.\ fully nonlinear Isaacs equations~\cite{KaweckiSmears2021,KaweckiSmears2022}, Monge--Amp\`ere equations~\cite{GallistlTran2023,GallistlTran2024}, and homogenization theory~\cite{SprekelerSuliCapdeboscq2020,GallistlSprekelerSuli2021,SprekelerSuliZhang2025}.
We briefly detail several key well-posedness results for the HJB and KFP equations with Cordes coefficients, when considered separately, in Section~\ref{sec:HJB_KFP_cordes} below.
We will also show below, in Theorem~\ref{thm:comparison_nondivergence} and its Corollary~\ref{cor:kfp_comparison}, that a comparison principle is available for operators with Cordes coefficients, which requires less integrability on the second-order derivatives of functions than the usual Aleksandrov--Bakelman--Pucci maximum principle when $d\geq 3$; this original comparison principle extends some ideas in~\cite{SprekelerSuliZhang2025}, which treated periodic boundary conditions on a torus, to Dirichlet boundary-value problems on nonsmooth convex domains.
The comparison principle shown here is, to the best of our knowledge, also another original contribution.
These results provide fundamental tools for the analysis of the MFG system.

The outline for this paper is as follows. Section~\ref{sec:notation} below sets out the notation and assumptions used for the analysis, and states the PDI and VI formulations of the problem.
The main results are gathered in Section~\ref{sec:main_results}, including the theorems on the equivalence of the PDI and VI problems, as well as the existence and uniqueness of solutions.
Section~\ref{section: equivalence proof} contains the proof of the equivalence of the PDI and VI formulations.
Then, in Section~\ref{sec:HJB_KFP_cordes}, we summarize some key results on the HJB and KFP equations with Cordes coefficients on convex domains, alongside the original comparison principles for problems with Cordes coefficients.
The proofs of the existence and uniqueness results are given in Section~\ref{section: well-posed}.
In Section~\ref{section: regularization}, we consider the properties of the MFG system under regularization of the Hamiltonian and perturbations of the data.
Appendix~\ref{appendix: comparison principle} contains the proof of the comparison principle.

\section{Notation and formulation of the problem}\label{sec:notation}

Recall that $\Omega$ is assumed to be an open, bounded, convex subset of $\R^d$, $d\geq 1$.
Let $L^2(\Omega)$ denote the usual space of Lebesgue measurable square-integrable extended real-valued functions on $\Omega$, and let $(\cdot,\cdot)_\Omega$ denote the inner-product on $L^2(\Omega)$.
Let $L^2_+(\Omega)$ denote the subset of functions in $L^2(\Omega)$ that are nonnegative a.e.\ in $\Omega$.
For each integer $k\in\N$, let $H^k(\Omega)\coloneqq W^{k,2}(\Omega)$ and $H^k_0(\Omega)\coloneqq W^{k,2}_0(\Omega)$ denote the usual Sobolev spaces, cf.~\cite{AdamsFournier03}, equipped with their usual norms and inner products. 
As we will be interested in strong solutions of the HJB equation~\eqref{eqn: stationary MFG1} satisfying a homogeneous Dirichlet boundary condition on $\partial \Omega$, we will work with the space $V$ defined by
\begin{equation}\label{eq:V_def}
V \coloneqq  H^2(\Omega) \cap H^1_0(\Omega).
\end{equation}
The space~$V$ is a Hilbert space equipped with the same norm and inner-product as $H^2(\Omega)$.
The source term $G$ appearing in~\eqref{eqn:mfg_pdi_2} will allowed to be distributional.
More precisely, we suppose that~$G\in V^*$, where $V^*$ denotes the dual space of~$V$.
We will also frequently assume that~$G$ is nonnegative in the sense of distributions, by which we mean that $\angled{G, v}_{V^*\times V} \geq 0$ for all nonnegative a.e.\ functions $v \in V$, where $\angled{\cdot ,\cdot}_{V^*\times V}$ denotes the duality pairing between $V$ and $V^*$.
We assume that $F\colon L^2(\Omega)\to L^2(\Omega)$ is a given operator.
To highlight the fact that $F$ is possibly nonlinear and nonlocal, we write $F\colon m\mapsto F[m]$ for each $m\in L^2(\Omega)$.
In the analysis of the existence of solutions developed below, we will also assume that $F$ is \emph{completely continuous}, i.e.\ if $\{m_j\}_{j\in\N}$ is a weakly converging sequence in $L^2(\Omega)$ with weak  limit $m$, then $F[m_j] \rightarrow F[m]$ strongly in $L^2(\Omega)$ as $j\to \infty$.
Note that completely continuous operators are bounded, i.e.\ they map bounded sets to bounded sets.
We do not require any growth condition on~$F$.
As a remark, it is well-known that if the operator~$F$ is linear, then~$F$ is completely continuous if, and only if, it is compact, cf.~\cite[Chapter VI]{Conway1990}.

\begin{example}\label{example: hilbert schmidt kernel}
Simple examples of operators $F\colon L^2(\Omega)\to L^2(\Omega)$ that are completely continuous are provided by Hilbert--Schmidt operators of the form $F[m](x)= \int_\Omega K(x,y)m(y)\,\mathrm{d}y$ for all $x\in \Omega$, where $K\in L^2(\Omega\times\Omega)$.
\end{example}

\subsection{The Hamiltonian and its subdifferential.}
Recall that the Hamiltonian $H$ is defined in~\eqref{eqn:H_def} above, and recall that the control set $\mathcal{A}$ is assumed to be a compact metric space, and the functions $a\colon \overline{\Omega}\times \mathcal{A}\to \Rddsymp$ and $f\colon \overline{\Omega}\times\mathcal{A}\to \R$ are assumed to be continuous.
It follows that $H$ is real-valued, and also convex with respect to its second argument.
Furthermore, the hypotheses above also imply that $H$ is Lipschitz continuous with respect to $M$, in particular
\begin{equation}\label{eq:H_Lipschitz}
\begin{aligned}
\abs{H(x,M_1)-H(x,M_2)}\leq L_H|M_1-M_2| \quad\forall M_1,\,M_2 \in \Rddsym, 
\end{aligned}
\end{equation}
where, in a slight abuse of notation, $\abs{\cdot}$ denotes the Frobenius norm on $\Rdd$, and where the constant $L_H$ is defined by
\begin{equation}\label{eq:L_H_def}
L_H \coloneqq \sup_{(x,\alpha)\in \overline{\Omega}\times\mathcal{A}}|a(x,\alpha)| <\infty.
\end{equation}
The continuity of the functions $a$ and $f$ on $\overline{\Omega}\times\mathcal{A}$ and the Lipschitz continuity of $H$, cf.~\eqref{eq:H_Lipschitz} above, imply that the operator $H[\cdot]\colon V\to L^2(\Omega)$ defined by
\begin{equation}\label{eq:H_op_def}
\begin{aligned}
H[v](x) \coloneqq H(x,D^2 v (x)) &&& \text{a.e. }x\in \Omega, \quad\forall v\in V,
\end{aligned}
\end{equation}
is well-defined, and is moreover also Lipschitz continuous.

Since $H$ is real-valued, convex, and continuous in its second argument, its partial subdifferential $\partial_M H(x,M)$, defined by
\begin{equation}
    \partial_M H(x,M) \coloneqq \{ A\in \R^{d\times d}_{\mathrm{sym}} \colon H(x,M)+A:(\widetilde{M}-M) \leq H(x,\widetilde{M}) \quad \forall \widetilde{M}\in \R^{d\times d}_{\mathrm{sym}}
      \},
\end{equation}
is a nonempty, closed and convex subset of $\Rddsym$ for each $(x,M)\in \overline{\Omega}\times \Rddsym$. Moreover, by Lipschitz continuity of $H$, the subdifferential $\partial_M H(x,M)\subset \overline{B_{\Rddsym}(0,L_H)}$ the closed ball of radius $L_H$ in $\Rddsym$ w.r.t.\ the Frobenius norm. 
The following characterization of the subdifferential sets is useful in order to deduce their properties in terms of the properties of the diffusion coefficient function $a$.
\begin{proposition}\label{prop:subdifferential_characterization}
Let $\mathcal{A}$ be a compact metric space, and let $a\in C(\overline{\Omega}\times\mathcal{A};\Rddsymp)$ and $f\in C(\overline{\Omega}\times\mathcal{A})$. Let $H$ be defined by~\eqref{eqn:H_def}.
For each $M\in \Rddsym$ and $x\in\overline{\Omega}$, let $\Lambda(x,M)\subset \mathcal{A}$ be defined by
\begin{equation}\label{eq:Lambda_def}
\Lambda(x,M) \coloneqq \argmax_{\alpha \in \mathcal{A}}\left\{ -a(x,\alpha):M - f(x,\alpha)  \right\}.
\end{equation} 
The set $\Lambda(x,M)$ is nonempty and compact in $\mathcal{A}$.
For each $(x,M)\in \overline{\Omega}\times \Rddsym$, we have
\begin{equation}\label{eq:subdifferential_characterization}
\partial_M H(x,M) = \Conv \left\{ - a(x,\alpha): \alpha \in \Lambda(x,M)  \right\}.
\end{equation}
\end{proposition}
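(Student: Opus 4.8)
The plan is to prove the two assertions—nonemptiness and compactness of $\Lambda(x,M)$, and the identity \eqref{eq:subdifferential_characterization}—by combining the Danskin-type differentiation principle for suprema of smooth functions with the reduction to a finite max via Carathéodory's theorem. Throughout, $x$ and $M$ are fixed, so I would abbreviate $\phi(\alpha) \coloneqq -a(x,\alpha):M - f(x,\alpha)$, which is a continuous real-valued function on the compact metric space $\mathcal{A}$; hence $\phi$ attains its maximum and $\Lambda(x,M) = \argmax_{\alpha\in\mathcal A}\phi(\alpha)$ is nonempty. Compactness follows because $\Lambda(x,M) = \phi^{-1}(\{\max\phi\})$ is a closed subset of the compact set $\mathcal{A}$. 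This disposes of the first claim quickly.

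For the inclusion ``$\supseteq$'' in \eqref{eq:subdifferential_characterization}: if $\alpha\in\Lambda(x,M)$, then for any $\widetilde M\in\Rddsym$ we have $H(x,\widetilde M)\geq -a(x,\alpha):\widetilde M - f(x,\alpha) = \phi(\alpha) + (-a(x,\alpha)):(\widetilde M - M) = H(x,M) + (-a(x,\alpha)):(\widetilde M - M)$, so $-a(x,\alpha)\in\partial_M H(x,M)$. Since $\partial_M H(x,M)$ is convex and closed, it contains the closed convex hull of $\{-a(x,\alpha):\alpha\in\Lambda(x,M)\}$; note this hull is already closed because $\Lambda(x,M)$ is compact and $\alpha\mapsto -a(x,\alpha)$ is continuous, so $\{-a(x,\alpha):\alpha\in\Lambda(x,M)\}$ is compact and its convex hull in the finite-dimensional space $\Rddsym$ is compact by Carathéodory.

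For the reverse inclusion ``$\subseteq$'', which is the main obstacle, I would argue by contradiction using a separation argument. Suppose $A\in\partial_M H(x,M)$ but $A\notin K \coloneqq \Conv\{-a(x,\alpha):\alpha\in\Lambda(x,M)\}$. Since $K$ is a nonempty compact convex set in the finite-dimensional inner-product space $(\Rddsym,{:})$, the Hahn--Banach separation theorem yields a direction $N\in\Rddsym$ and $\varepsilon>0$ with $A:N \geq \sup_{B\in K} B:N + \varepsilon$, i.e.\ $A:N \geq -a(x,\alpha):N + \varepsilon$ for all $\alpha\in\Lambda(x,M)$. The key quantitative step is to upgrade this to a strict inequality on all of $\mathcal{A}$ for small parameter, i.e.\ to show there is $t_0>0$ such that $\phi(\alpha) + t\,(-a(x,\alpha)):N < H(x,M) + t(A:N) - t\varepsilon/2$ fails to be achievable, contradicting the subgradient inequality $H(x,M+tN)\geq H(x,M) + tA:N$. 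Concretely, by upper semicontinuity of the argmax correspondence (continuity of $\phi$ on the compact $\mathcal{A}$), for every neighbourhood $U$ of $\Lambda(x,M)$ there is $\delta>0$ with $\phi(\alpha)\leq \max\phi - \delta$ for $\alpha\notin U$; choosing $U$ small enough that $-a(x,\cdot):N$ is within $\varepsilon/4$ of its values on $\Lambda(x,M)$ there, one estimates $H(x,M+tN) = \max_\alpha [\phi(\alpha) + t(-a(x,\alpha):N)]$ by splitting over $U$ and its complement: on $U$ one gets at most $H(x,M) + t(\sup_{\alpha\in\Lambda}(-a(x,\alpha):N) + \varepsilon/4) \leq H(x,M) + t(A:N - 3\varepsilon/4)$ for small $t$, and on $\mathcal A\setminus U$ one gets at most $H(x,M) - \delta + tC$ which is $\leq H(x,M) + t(A:N)$ for $t$ small since the $-\delta$ dominates. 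Hence $H(x,M+tN) \leq H(x,M) + t(A:N - 3\varepsilon/4) < H(x,M) + tA:N$ for all sufficiently small $t>0$, contradicting $A\in\partial_M H(x,M)$. This forces $A\in K$ and completes the proof. I expect the bookkeeping in this last neighbourhood/splitting estimate to be the only genuinely delicate part; everything else is standard convex analysis plus compactness.
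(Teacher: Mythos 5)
Your proposal is correct, and its backbone --- direct verification of $\supseteq$, then proving $\subseteq$ by strict separation and exhibiting a direction in which the one-sided directional derivative of $H$ falls below what the subgradient inequality demands --- is the same as the paper's. The point of divergence is the device used to complete the contradiction. The paper appeals to upper semicontinuity of the argmax correspondence $\widetilde M\mapsto\Lambda(x,\widetilde M)$, cited from \cite[Lemma~9]{SmearsSuli2014}, which yields a $\delta>0$ with $\Lambda(x,M+\delta W)\subset U$; it then evaluates $H(x,M+\delta W)$ at a maximizer, which necessarily lies in $U$, to obtain the decrease. You avoid the set-valued semicontinuity altogether: you observe that $\phi$ is bounded strictly below $\max\phi$ on $\mathcal{A}\setminus U$, and you split the supremum defining $H(x,M+tN)$ over $U$ and $\mathcal{A}\setminus U$, estimating each piece for small $t>0$. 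This is more self-contained and elementary, at the price of a bit more bookkeeping, which, as you anticipated, is the delicate part. One cosmetic slip at the end: the displayed combined bound $H(x,M+tN)\le H(x,M)+t(A{:}N-3\varepsilon/4)$ is not quite what your two case estimates give (the $\mathcal{A}\setminus U$ piece gives only $H(x,M)-\delta+tC$), but since both pieces are strictly below $H(x,M)+tA{:}N$ for all sufficiently small $t>0$, the contradiction $H(x,M+tN)<H(x,M)+tA{:}N$ you need still holds, so the proof survives.
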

Note that the set $\Lambda(x,M)$ denotes the set of optimal feedback controls associated to $(x,M)$.
Thus, Proposition~\ref{prop:subdifferential_characterization} shows that the subdifferential of the Hamiltonian is the convex hull of the set of opposite diffusion tensors corresponding to the optimal feedback controls.
\begin{proof}
The result is perhaps well-known, so we include an elementary proof only for the sake of completeness.
It can be shown that the compactness of~$\mathcal{A}$ and the continuity of the functions $a$ and $f$ imply that the set-valued map $\Lambda\colon(x,M)\rightrightarrows \Lambda(x,M)$ is upper semicontinuous, and that $\Lambda(x,M)$ is nonempty and closed, hence compact, in $\mathcal{A}$ for each $(x,M)\in \overline{\Omega}\times \Rddsym$, see for instance~\cite[Lemma~9]{SmearsSuli2014}.
We now show~\eqref{eq:subdifferential_characterization}. Let $(x,M)\in \overline{\Omega}\times\Rddsym$ be fixed but arbitrary.
By continuity of the data and compactness of $\Lambda(x,M)$, the set $\left\{ - a(x,\alpha): \alpha \in \Lambda(x,M)  \right\}$ is also compact, and hence the convex hull $\Conv\left\{ - a(x,\alpha): \alpha \in \Lambda(x,M)  \right\}$ is compact in $\Rddsym$.
It is straightforward to check that $\Conv \left\{ - a(x,\alpha): \alpha \in \Lambda(x,M)  \right\}$ is a subset of $\partial_M H(x,M)$.
To show the converse and complete the proof, it is enough to show that the complement of $\Conv \left\{ - a(x,\alpha): \alpha \in \Lambda(x,M)  \right\}$ is a subset of the complement of $\partial_M H(x,M)$.
To this end, let $A\in \Rddsym\setminus \Conv \left\{ - a(x,\alpha): \alpha \in \Lambda(x,M)  \right\}$ be arbitrary. 
Then, by the hyperplane separation theorem, there exists a $W\in\Rddsym$, with $|W|=1$, and a $\rho>0$ such that $-a(x,\alpha):W \leq A:W-\rho $ for all $\alpha\in\Lambda(x,M)$. 
Since $a$ is continuous, there exists a neighbourhood $U$ of $\Lambda(x,M)$ such that $-a(x,\alpha):W \leq A:W-\rho/2$ for all $\alpha\in U$. 
Then, by upper semicontinuity of $\Lambda$, there is a $\delta>0$ such that $\Lambda(x,\widetilde{M})\subset U$ for all $\widetilde{M}\in \Rddsym$ such that $\abs{\widetilde{M}-M}\leq\delta$. 
It is then easy to check that $H(x,M+\delta W)\leq H(x,M)+A:(\delta W)-\delta\rho/2$, which shows that $A\not\in\partial_M H(x,M)$. 
This shows~\eqref{eq:subdifferential_characterization} and completes the proof.
\end{proof}

We now define the set-valued map $\mathcal{D}H\colon V \rightrightarrows L^\infty(\Omega;\Rddsym) $ by
\begin{equation}
\mathcal{D}H[v] \coloneqq \left\{ A \in L^\infty(\Omega;\R^{d\times d}_{\mathrm{sym}}) \colon A(x) \in \partial_M H(x,D^2 v(x)) \quad \text{for a.e. } x\in \Omega \right\},
\end{equation}
for each $v\in V$.
In other words, $\mathcal{D}H[v]$ is the set of all measurable selections of the set-valued map $x\mapsto \partial_M H(x,D^2 v(x))$.
It is known that $\mathcal{D}H[v]$ is nonempty for each $v\in V$.
Indeed, for any $v\in V$, by~\cite[Theorem~10]{SmearsSuli2014}, there exists a Lebesgue measurable $\alpha_*\colon \Omega\to\mathcal{A}$ such that $\alpha_*(x)\in \Lambda(x,D^2 v(x))$ for a.e.\ $x\in \Omega$, where the set-valued map $\Lambda$ was defined in~\eqref{eq:Lambda_def} above. 
Then, by Proposition~\ref{prop:subdifferential_characterization}, we see that $x\mapsto -a(x,\alpha_*(x)) \in \mathcal{D}H[v]$.
The fact that $\mathcal{D}H[v]$ is necessarily a subset of $L^\infty(\Omega;\Rddsym)$ for all $v\in V$ follows from the boundedness of the subdifferential sets as shown above. 
In fact, it follows from~\eqref{eq:H_Lipschitz} that the set-valued map $\mathcal{D}H[\cdot]$ satisfies the stronger uniform boundedness property
\begin{equation}\label{eq:DH_uniform_boundedness}
\begin{aligned}
\mathcal{D}H[v] \subset \{ A\in L^\infty(\Omega;\Rddsym) : \norm{A}_{L^\infty(\Omega;\Rdd)}\leq L_H \}  &&& \forall v\in V.
\end{aligned}
\end{equation}
It is also immediately clear that $\mathcal{D}H[v]$ is a convex set for any $v\in V$.
The following result shows that the graph of the set-valued map $\mathcal{D}H[\cdot]$ is sequentially closed in a suitable sense.
\begin{lemma}\label{lem:closure_measurable_selections}
Let $\{v_j\}_{j\in\N}$ be a sequence in $V$ and let $A_j\in \mathcal{D}H[v_j]$ for each $j\in \N$.
If $v_j\to v$ in $V$ and if $A_j\overset{*}{\rightharpoonup} A$ in $L^\infty(\Omega;\Rddsym)$ as $j\to \infty$ for some $v\in V$ and $A\in L^\infty(\Omega;\Rddsym)$, then $A\in \mathcal{D}H[v]$.
\end{lemma}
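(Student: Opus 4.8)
The plan is to reduce the set-valued membership $A(x)\in\partial_M H(x,D^2v(x))$ to a countable family of scalar pointwise inequalities that are affine in the selection and continuous in the Hessian argument, hence stable under the passage to the weak-$*$ limit. The key observation is the elementary \emph{incremental} characterization of the subdifferential of a finite convex function: for $M\in\Rddsym$ and $B\in\Rddsym$ one has $B\in\partial_M H(x,M)$ if and only if $B:W\le H(x,M+W)-H(x,M)$ for every $W\in\Rddsym$ (the ``if'' direction follows by replacing $W$ with $tW$, dividing by $t>0$ and letting $t\to 0^+$). Applied with $M=D^2v_j(x)$ and $B=A_j(x)$, the hypothesis $A_j\in\mathcal{D}H[v_j]$ becomes
\[
A_j(x):W \le H(x,D^2v_j(x)+W)-H(x,D^2v_j(x)) \quad\text{for a.e. }x\in\Omega,\ \forall W\in\Rddsym.
\]

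First I would fix $W\in\Rddsym$ and a nonnegative $\phi\in L^\infty(\Omega)$, multiply the displayed inequality by $\phi$ and integrate over $\Omega$; by~\eqref{eq:H_Lipschitz} the right-hand side is bounded in modulus by $L_H|W|$ pointwise, so every integral is finite. Then I would let $j\to\infty$. On the left, since $A_j\overset{*}{\rightharpoonup}A$ in $L^\infty(\Omega;\Rddsym)$ and $\phi W\in L^1(\Omega;\Rddsym)$, we get $\int_\Omega (A_j:W)\phi\,\dx\to\int_\Omega(A:W)\phi\,\dx$. On the right, \eqref{eq:H_Lipschitz} gives the pointwise bound $\bigl|[H(x,D^2v_j+W)-H(x,D^2v_j)]-[H(x,D^2v+W)-H(x,D^2v)]\bigr|\le 2L_H|D^2v_j(x)-D^2v(x)|$, and since $v_j\to v$ in $V$ implies $D^2v_j\to D^2v$ in $L^2(\Omega;\Rddsym)$, hence in $L^1(\Omega;\Rddsym)$, the right-hand integrals converge too. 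This yields
\[
\int_\Omega \bigl( A(x):W - H(x,D^2v(x)+W) + H(x,D^2v(x)) \bigr)\phi(x)\,\dx \le 0
\]
for every nonnegative $\phi\in L^\infty(\Omega)$; since the integrand lies in $L^1(\Omega)$, taking $\phi$ to be the indicator of the set where it is positive forces it to be nonpositive a.e., i.e. $A(x):W\le H(x,D^2v(x)+W)-H(x,D^2v(x))$ for a.e.\ $x\in\Omega$.

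To conclude, I would remove the dependence of the null set on $W$: choosing a countable dense set $\mathcal{W}\subset\Rddsym$ and a common full-measure set on which the last inequality holds for all $W\in\mathcal{W}$, the Lipschitz continuity of $H(x,\cdot)$ (so that $W\mapsto H(x,D^2v(x)+W)-H(x,D^2v(x))$ is continuous) together with the linearity of $W\mapsto A(x):W$ extends the inequality to every $W\in\Rddsym$ at each such $x$. By the incremental characterization recalled above, this is precisely $A(x)\in\partial_M H(x,D^2v(x))$ for a.e.\ $x\in\Omega$, that is, $A\in\mathcal{D}H[v]$.

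I do not expect a genuine obstacle here: the only points needing mild care are the reduction to a countable dense set of directions $W$ (dispatched by continuity) and the integrability bookkeeping (immediate, since $H(x,D^2v(x)+W)-H(x,D^2v(x))$ is bounded by $L_H|W|$ and $A,A_j\in L^\infty$). The conceptual step on which everything rests is rewriting the subdifferential condition in the incremental form above, which is linear in the selection $A_j$ and stable under $L^1$-convergence of the Hessians, so that mere weak-$*$ convergence of $\{A_j\}_{j\in\N}$ suffices to pass to the limit.
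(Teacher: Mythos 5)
Your proof is correct. The paper itself does not present an argument for this lemma but only remarks that it is a straightforward extension of \cite[Lemma~4.4]{OsborneSmears2024i}, so a literal line-by-line comparison is not possible from the text here. That said, the route you take — rewriting $A(x)\in\partial_M H(x,D^2v(x))$ as the family of scalar inequalities $A(x):W\le H(x,D^2 v(x)+W)-H(x,D^2 v(x))$, which is affine in the selection and Lipschitz in the Hessian argument; integrating against nonnegative $\phi\in L^\infty(\Omega)$; passing to the limit using weak-$*$ convergence of $A_j$ on the left and the bound $\bigl|[H(\cdot,D^2v_j+W)-H(\cdot,D^2v_j)]-[H(\cdot,D^2v+W)-H(\cdot,D^2v)]\bigr|\le 2L_H|D^2v_j-D^2v|$ together with $D^2v_j\to D^2v$ in $L^1(\Omega;\Rddsym)$ on the right; localizing via the indicator of the positivity set; and finally collapsing the $W$-dependent null sets through a countable dense set $\mathcal{W}\subset\Rddsym$ and the Lipschitz continuity of $H(x,\cdot)$ — is the standard and essentially forced argument for this kind of closed-graph statement for measurable selections of subdifferentials, and is almost certainly what the cited lemma does. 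Two minor stylistic remarks: the ``incremental characterization'' you invoke is literally the paper's definition of $\partial_M H$ after the change of variable $\widetilde{M}=M+W$, so the aside about recovering the ``if'' direction via scaling by $t>0$ is superfluous; and you could dispense with the $\phi$-testing step entirely by passing to the limit directly against arbitrary indicator functions of measurable sets, though what you wrote is perfectly fine.
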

\begin{proof}
The proof is a straightforward extension of \cite[Lemma~4.4]{OsborneSmears2024i}.
\end{proof}
Observe that Lemma~\ref{lem:closure_measurable_selections} together with the boundedness property~\eqref{eq:DH_uniform_boundedness} imply that $\mathcal{D}H[v]$ is weak-$*$ sequentially compact in $L^\infty(\Omega;\Rddsym)$ for any $v\in V$.

\begin{remark}\label{rem:no_strong_continuity}
We emphasize the fact that, beyond Lemma~\ref{lem:closure_measurable_selections}, the set-valued map $\mathcal{D}H$ does not generally have good continuity properties, especially with regards to convergence in norm.
There are known examples of sequences of functions $\{v_j\}_{j\in \N}$, with corresponding selections $A_j\in \mathcal{D}H[v_j]$ for all $j\in \N$, such that $A_j$ has no Cauchy subsequence in any Lebesgue space $L^p$ for any $p\geq 1$.
This creates significant difficulties when one wishes to pass to limits in various terms of the equations.
\end{remark}

\subsection{Formulations of the problem}

We now introduce the two problem formulations of the MFG system.
The first formulation, which we name the PDI problem, is the precise problem formulation for the PDI system~\eqref{eqn:mfg_pdi_system} discussed above. 
This formulation is the natural extension from~\cite{OsborneSmears2024i} to the fully nonlinear setting.
In the second formulation that we consider, the KFP equation is replaced by a variational inequality~(VI). 
We therefore refer to it as the VI problem.
As explained above, the motivation for considering the VI problem is flexibility with respect to passage to the limits, as will become clearer in the analysis below.
The theorem on equivalence of the two formulations is given in the next section.

\paragraph{Formulation of the PDI problem.}
First, we consider the MFG PDI problem: find $(u,m)\in V\times L^2(\Omega)$ such that there exists an $-\overline{a}\in \mathcal{D}H[u]$ for which
\begin{subequations}\label{eq:MFG_PDI}
\begin{alignat}{2}
H[u]&= F[m] &\quad& \text{in } \Omega, \label{eq:HJB_strong_form}\\
\left( m , -\overline{a}:D^2 v\right)_\Omega &= \pair{G}{v}_{V^*\times V} &\quad& \forall v\in V.
\label{eq:KFP_inclusion}
\end{alignat}
\end{subequations}
We shall refer to such a pair $(u,m) \in V \times L^2(\Omega)$ as a solution of the PDI problem, or, more informally, a~\emph{PDI solution}.
The HJB equation in~\eqref{eq:HJB_strong_form} is understood in the sense of strong solutions, i.e.\ $u\in V$ solves $H[u](x)=F[m](x)$ a.e.\ in $\Omega$, where we recall that $H[u](x)=H(x,D^2u(x))$ for $x\in\Omega$.
The KFP equation~\eqref{eq:KFP_inclusion}, which is given in very weak form, is obtained from~\eqref{eqn:mfg_pdi_2} after multiplication by a test function $v\in V$ and integrating-by-parts twice the second-order derivative terms, and using the homogeneous Dirichlet boundary conditions for both the test function $v$ and for $m$ to cancel the boundary terms.

\paragraph{Formulation of the VI problem.}\label{section: notions of solution}
Second, we define the VI problem: find $(u,m)\in V\times L^2_+(\Omega)$ such that
\begin{subequations}\label{eq:MFG_VI}
\begin{alignat}{2}
H[u]&= F[m] &\quad& \text{in } \Omega, \label{eq:HJB_strong_form_2}
\\ \pair{G}{v-u}_{V^*\times V} &\leq \left(m, H[v]-H[u]\right)_\Omega &\quad& \forall v \in V. \label{eq:KFP_VI} 
\end{alignat}
\end{subequations}
We say that a pair $(u,m)\in V\times L^2_+(\Omega)$ that satisfies~\eqref{eq:MFG_VI} is a solution of the VI problem, or, more informally, a \emph{VI solution}.
As before, the HJB equation $H[u]=F[m]$ in $\Omega$ is understood in the sense of strong solutions.
It is straightforward to derive the VI problem~\eqref{eq:MFG_VI} above from the PDI problem~\eqref{eq:MFG_PDI} when assuming the existence of a PDI solution $(u,m)\in V\times L^2_+(\Omega)$; the interested reader may skip ahead to Lemma~\ref{lem:PDI_solution_are_VI_solution} below.
The VI in~\eqref{eq:KFP_VI} can also be stated in abstract form as requiring that $G\in V^*$ must be contained in the subdifferential, evaluated at $u$, of the $m$-dependent convex functional $V\ni v\mapsto (m,H[v])_\Omega\in \R$.

We emphasize that the definition of a VI solution above includes the requirement that $m\in L^2_+(\Omega)$, i.e.\ $m\in L^2(\Omega)$ must also be nonnegative a.e.\ in $\Omega$.
This condition is natural since the density~$m$ should be nonnegative to be physically meaningful, and since the nonnegativity of~$m$ is used in the derivation of the VI problem~\eqref{eq:MFG_VI} from the PDI problem~\eqref{eq:MFG_PDI} that will be shown in Lemma~\ref{lem:PDI_solution_are_VI_solution} below.
This is in contrast to the definition of a solution of the PDI problem~\eqref{eq:MFG_PDI}, where the nonnegativity of the density~$m$ is not a requirement of the definition. It is instead usually deduced as a consequence of a comparison principle for KFP equations, under suitable assumptions on the problem data, e.g.\ if $G$ is assumed to be nonnegative, cf.\ Corollary~\ref{cor:kfp_comparison} below.

\begin{remark}[Relation to other variational inequalities]\label{rem:other_vi}
Variational inequalities for MFG systems have been analysed previously by Ferreira and Gomes in~\cite{FerreiraGomes2018}, and also with Ucer in~\cite{FerreiraGomesUcer2025}, through the lens of monotone operator theory. They have obtained existence results for broad classes of MFG systems that have some global monotonicity structure.
Although their approach provides much insight and inspiration, for the avoidance of any confusion, we wish to clarify that the VI problem~\eqref{eq:MFG_VI} above, along with our approach to the analysis, is substantially different.
First, the operator defining the variational inequality in~\cite[Eq.~(1.7)]{FerreiraGomes2018} requires significantly more differentiability of the Hamiltonian, in particular that $H$ must be differentiable in $M$ with $\frac{\partial H}{\partial M}$ additionally at least $C^2$ regular with respect to the Hessian variable~\cite[Section~7.2, p.~6003]{FerreiraGomes2018}.
By contrast, the VI problem~\eqref{eq:MFG_VI} is still meaningful for nondifferentiable Hamiltonians.
Second, the HJB equation in~\eqref{eq:MFG_VI} is understood in the sense of strong solutions, rather than in the form of a variational inequality as is the case in~\cite[Eq.~(1.7)]{FerreiraGomes2018}.
Finally, the analysis of existence of solutions that follows below does not require monotonicity of the coupling operator $F$.
On the other hand, we only focus here on the fully nonlinear second-order setting, whereas the references above also consider first-order problems.
\end{remark}

\subsection{Cordes coefficients}

The analysis of existence and uniqueness of solutions of the PDI and VI problems stated above will focus on the class of problems with Cordes coefficients, as we now detail.
Recall that $\Rddsymp$ denotes the set of symmetric positive semidefinite matrices in $\R^{d\times d}$.
For positive constants $\overline{\nu} \geq \underline{\nu}>0$, and $\varepsilon\in (0,1]$, we define~$\Cclass$ to be the set of all matrices $\overline{a}\in \Rddsymp$ that satisfy the uniform ellipticity condition
\begin{equation}\label{eq:uniform_ellipticity}
\begin{aligned}
\underline{\nu} |\xi|^2 \leq \xi^T\, \overline{a} \,\xi  \leq \overline{\nu} |\xi|^2 &&& \forall \xi \in \R^d,
\end{aligned}
\end{equation}
as well as the Cordes condition
\begin{equation}\label{eq:Cordes_condition}
\frac{\abs{\overline{a}}}{\Trace \overline{a}} \leq \frac{1}{\sqrt{d-1+\varepsilon}},
\end{equation}
where it is recalled that $\abs{\cdot}$ denotes the Frobenius norm of matrices in $\Rdd$.
Note that $\Cclass$ is a closed, convex subset of $\Rddsymp$. 
Indeed, it is clear that $\Cclass$ closed.
To check that $\Cclass$ is convex, note that, for any $a_1$, $a_2 \in \Cclass$, and any $\lambda\in [0,1]$, it is clear that $\lambda a_1+ (1-\lambda) a_2\in \Rddsymp$ also satisfies the uniform ellipticity condition~\eqref{eq:uniform_ellipticity}; furthermore, the triangle inequality implies that
\begin{equation}\label{eq:Cordes_1}
\abs{ \lambda a_1 + (1-\lambda) a_2}\leq \lambda \abs{a_1} + (1-\lambda)\abs{a_2} \leq  
\frac{\Trace \left(\lambda a_1 + (1-\lambda) a_2\right)}{\sqrt{d-1+\varepsilon}}.
\end{equation}
The uniform ellipticity condition~\eqref{eq:uniform_ellipticity} implies that $\Trace \left(\lambda a_1 + (1-\lambda) a_2\right)>0$, and thus we conclude from~\eqref{eq:Cordes_1} that $\lambda a_1 + (1-\lambda) a_2$ also satisfies~\eqref{eq:Cordes_condition}, and thus also belongs to $\Cclass$. 
This shows that $\Cclass$ is convex.

The principal assumption that we will use in the analysis of existence and uniqueness of solutions of the MFG PDI and VI problems is that the problem has Cordes coefficients. More precisely, we make the following assumption.
\begin{assumption}\label{assumption: Cordes}
There exists constants $\overline{\nu}\geq\underline{\nu}>0$, and $\varepsilon \in (0,1]$ such that $a(x,\alpha)\in \Cclass$ for all $x\in\overline{\Omega}$ and all $\alpha\in\mathcal{A}$.
\end{assumption}

\begin{remark}
For problems in two space dimensions, i.e.\ $d = 2$, the uniform ellipticity condition~\eqref{eq:uniform_ellipticity} implies the Cordes condition~\eqref{eq:Cordes_condition} for some $\varepsilon=\varepsilon(\underline{\nu},\overline{\nu})\in (0,1]$, c.f~\cite[Example 2]{SmearsSuli2014}.
The condition~\eqref{eq:Cordes_condition} only becomes more restrictive when $d\geq 3$.
Geometrically, the Cordes condition can be seen as defining a particular cone in the space~$\Rddsym$ of symmetric matrices with vertex at the origin and principal axis passing through the identity matrix.
\end{remark}

\begin{remark}
Following on from the previous remark, notice that uniformly elliptic Hamiltonians that depend only on the Laplacian $\Delta u$, rather than the full Hessian $D^2 u$, are special cases of the class of Hamiltonians considered here.
\end{remark}

\begin{remark}\label{rem:Cordes_condition_original}
In~\cite{Cordes1956}, Cordes formulated his condition in terms of the eigenvalues $\{\lambda_i\}_{i=1}^d$ of the diffusion coefficient of the elliptic operator. 
In particular, after adapting slightly the notation, the condition in \cite[Definition~1]{Cordes1956} is written as $(d-1)\sum_{i<k}(\lambda_i-\lambda_k)^2\leq (1-\widetilde{\epsilon})\left(\sum_{i} \lambda_i\right)^2$ for some positive $\widetilde{\epsilon}$, where $\sum_{i<k}$ denotes a double sum over all indices $(i,k)\in\{1,\ldots,d\}^2$ with $i<k$. For $\overline{a}\in\Rddsymp$ with eigenvalues $\{\lambda_i\}_{i=1}^d$, we have the elementary identities $\Trace \overline{a} = \sum_{i} \lambda_i$ and $\sum_{i<k}(\lambda_i-\lambda_k)^2=d \abs{\overline{a}}^2-\left(\Trace \overline{a}\right)^2$, thus the condition there is equivalent to $\frac{\abs{\overline{a}}^2}{(\Trace\overline{a})^2}\leq \frac{d-\widetilde{\epsilon}}{d(d-1)}$, and since $d-\widetilde{\epsilon}<d$, we see that the above condition is equivalent to~\eqref{eq:Cordes_condition} for some~$\varepsilon$ depending on~$\widetilde{\epsilon}$.
\end{remark}

An important, although elementary, consequence of the convexity of the set $\Cclass$ is that the uniform ellipticity and the Cordes condition are preserved upon taking measurable selections from the subdifferential sets of the Hamiltonian.

\begin{lemma}\label{lem:Cordes_subdifferential}
Suppose that Assumption~\ref{assumption: Cordes} holds.
Then, for any $v\in V$ and any $-\overline{a}\in\mathcal{D}H[v]$, we have $\overline{a}(x)\in \Cclass$ for a.e.\ $x\in\Omega$.
\end{lemma}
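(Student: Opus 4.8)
The plan is to combine the characterization of the subdifferential in Proposition~\ref{prop:subdifferential_characterization} with the convexity and closedness of the set $\Cclass$, which were established just above the statement. Fix $v\in V$ and a selection $-\overline{a}\in\mathcal{D}H[v]$. By definition of $\mathcal{D}H[v]$, we have $-\overline{a}(x)\in \partial_M H(x,D^2 v(x))$ for a.e.\ $x\in\Omega$, so it suffices to work pointwise: I would show that for every $x\in\overline{\Omega}$ and every $M\in\Rddsym$, the inclusion $-A\in\partial_M H(x,M)$ implies $A\in\Cclass$.

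The key step is to invoke Proposition~\ref{prop:subdifferential_characterization}, which gives
\[
\partial_M H(x,M) = \Conv\left\{ -a(x,\alpha) : \alpha\in\Lambda(x,M) \right\}.
\]
Hence $-A$ is a finite convex combination $\sum_{k} \lambda_k\bigl(-a(x,\alpha_k)\bigr)$ for some $\alpha_k\in\Lambda(x,M)\subset\mathcal{A}$ and weights $\lambda_k\geq 0$ with $\sum_k\lambda_k=1$ (one may use Carath\'eodory's theorem to take the combination finite, though even this is not strictly needed). Equivalently, $A=\sum_k\lambda_k\, a(x,\alpha_k)$. Now Assumption~\ref{assumption: Cordes} guarantees that each $a(x,\alpha_k)\in\Cclass$, and since $\Cclass$ is convex, the convex combination $A$ also lies in $\Cclass$. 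Applying this at a.e.\ $x\in\Omega$ with $M=D^2 v(x)$ yields $\overline{a}(x)\in\Cclass$ for a.e.\ $x\in\Omega$, as required.

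There is no serious obstacle here; the lemma is essentially a direct corollary of Proposition~\ref{prop:subdifferential_characterization} and the convexity of $\Cclass$. The only minor point to be careful about is measurability, but this is already subsumed in the definition of $\mathcal{D}H[v]$ as a set of measurable selections, so the pointwise argument combined with the a.e.\ inclusion $-\overline{a}(x)\in\partial_M H(x,D^2 v(x))$ suffices, and no additional measurable-selection machinery is needed for this particular statement.
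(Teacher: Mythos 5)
Your proof is correct and follows essentially the same route as the paper: invoke the characterization $\partial_M H(x,M)=\Conv\{-a(x,\alpha):\alpha\in\Lambda(x,M)\}$ from Proposition~\ref{prop:subdifferential_characterization}, note that each $a(x,\alpha)\in\Cclass$ by Assumption~\ref{assumption: Cordes}, and conclude by convexity of $\Cclass$; you have merely unpacked the paper's terse ``it is then clear that'' by writing out the finite convex combination explicitly. Your observation that closedness of $\Cclass$ plays no role here (only convexity) and that no extra measurable-selection work is needed beyond the definition of $\mathcal{D}H[v]$ are both correct.
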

\begin{proof}
Recall that $\Cclass$ is a closed convex set in $\Rddsym$, and recall also Proposition~\ref{prop:subdifferential_characterization}, in particular~\eqref{eq:subdifferential_characterization}.
It is then clear that, for any $M\in\Rddsym$, any $\overline{a}\in\Rddsym$ such that $-\overline{a}\in\partial_M H(x,M)$ also satisfies $\overline{a}\in\Cclass$.
This completes the proof, upon recalling that, by definition, any $-\overline{a}\in\mathcal{D}H[v]$ must satisfy $-\overline{a}(x)\in \partial_M H(x,D^2v(x))$ for almost every $x\in \Omega$.
\end{proof}

\section{Main results}\label{sec:main_results}

\subsection{Equivalence between the PDI and VI problems}

The first main result of this work is that the concepts of PDI solution and VI solution coincide for solution pairs where the density is nonnegative.

\begin{theorem}[Equivalence of PDI and VI solutions]\label{thm:PDI_VI_equivalence}
Let $\Omega$ be a bounded open subset of $\R^d$, $d\geq 1$.
Let $\mathcal{A}$ be a compact metric space, let $a\in C(\overline{\Omega}\times \mathcal{A};\Rddsym)$, and let $f\in C(\overline{\Omega}\times \mathcal{A})$.
If $(u,m)\in V\times L^2(\Omega) $ is a solution of the PDI problem~\eqref{eq:MFG_PDI} and if $m\geq 0$ a.e.\ in $\Omega$, then $(u,m)$ is a solution of the VI problem~\eqref{eq:MFG_VI}.
Conversely, if $(u,m)\in V\times L^2_+(\Omega)$ is a solution of the VI problem~\eqref{eq:MFG_VI}, then $(u,m)$ is a solution of the PDI problem~\eqref{eq:MFG_PDI}.
\end{theorem}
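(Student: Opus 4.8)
\emph{Setup and the easy direction.} Since the HJB equation $H[u]=F[m]$ appears verbatim in both~\eqref{eq:MFG_PDI} and~\eqref{eq:MFG_VI}, the whole statement is about passing between the KFP inclusion~\eqref{eq:KFP_inclusion} and the variational inequality~\eqref{eq:KFP_VI}. For the direction from PDI to VI I would argue directly: given a PDI solution $(u,m)$ with associated selection $-\overline{a}\in\mathcal{D}H[u]$ and $m\geq 0$ a.e., the subgradient inequality defining $\partial_M H$ gives, for each $v\in V$, the pointwise bound $-\overline{a}(x):(D^2v(x)-D^2u(x))\leq H[v](x)-H[u](x)$ for a.e.\ $x\in\Omega$. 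All three functions lie in $L^1(\Omega)$ by~\eqref{eq:H_Lipschitz} and $m,D^2v,D^2u\in L^2$, so multiplying by $m(x)\geq 0$, integrating over $\Omega$, and using~\eqref{eq:KFP_inclusion} tested with $v-u\in V$ to identify the left-hand side with $\pair{G}{v-u}_{V^*\times V}$, yields exactly~\eqref{eq:KFP_VI}. Nonnegativity of $m$ is precisely what allows the pointwise inequality to survive integration, and is why this direction needs $m\geq 0$.

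\emph{The hard direction: reduction to a minimax.} Now suppose $(u,m)\in V\times L^2_+(\Omega)$ solves the VI problem. Fix $w\in V$ and put $v=u+tw$ in~\eqref{eq:KFP_VI} with $t>0$; dividing by $t$ gives $\pair{G}{w}_{V^*\times V}\leq (m,\,t^{-1}(H[u+tw]-H[u]))_\Omega$. By convexity of $H$ in its matrix variable the difference quotients $t^{-1}(H(x,D^2u(x)+tD^2w(x))-H(x,D^2u(x)))$ are nondecreasing functions of $t>0$, hence as $t\downarrow 0$ they decrease to the one-sided directional derivative $H'_M(x,D^2u(x);D^2w(x))$, and are bounded in modulus by $L_H\abs{D^2w(x)}$ by~\eqref{eq:H_Lipschitz}. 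Dominated convergence (majorant $L_H\abs{m}\abs{D^2w}\in L^1(\Omega)$) then gives $\pair{G}{w}_{V^*\times V}\leq\int_\Omega m\,H'_M(x,D^2u(x);D^2w(x))\dx$ for every $w\in V$. For the finite convex function $H(x,\cdot)$ this directional derivative is the support function of the subdifferential, $H'_M(x,M;N)=\sup\{A:N:A\in\partial_M H(x,M)\}$. Since $m\geq 0$ one may move $m(x)$ inside the supremum, and a measurable maximal selection for the measurable, nonempty compact-valued multifunction $x\mapsto\partial_M H(x,D^2u(x))$ — available from Proposition~\ref{prop:subdifferential_characterization} and the measurable-selection theory underlying~\eqref{eq:DH_uniform_boundedness}, cf.~\cite{SmearsSuli2014} — gives the interchange of the supremum with the integral, so that $\pair{G}{w}_{V^*\times V}\leq\sup_{-\overline{a}\in\mathcal{D}H[u]}(m,-\overline{a}:D^2w)_\Omega$ for all $w\in V$.

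\emph{The minimax argument.} Introduce $\Phi\colon V\times\mathcal{D}H[u]\to\R$ by $\Phi(w,b):=\pair{G}{w}_{V^*\times V}-(m,b:D^2w)_\Omega$. The inequality just obtained reads $\inf_{b\in\mathcal{D}H[u]}\Phi(w,b)\leq 0$ for all $w\in V$, while $\Phi(0,b)=0$ gives the opposite inequality at $w=0$, so $\sup_{w\in V}\inf_{b\in\mathcal{D}H[u]}\Phi(w,b)=0$. The functional $\Phi$ is affine in $w$ for fixed $b$ and affine in $b$ for fixed $w$; for fixed $b$ the map $w\mapsto\Phi(w,b)$ is norm-continuous on $V$ (using $G\in V^*$, $\eqref{eq:DH_uniform_boundedness}$ and $m\in L^2$); for fixed $w$ the map $b\mapsto(m,b:D^2w)_\Omega$ is continuous for the weak-$*$ topology of $L^\infty(\Omega;\Rddsym)$ because $mD^2w\in L^1(\Omega;\Rddsym)$; and $\mathcal{D}H[u]$ is convex, bounded in $L^\infty$ by~\eqref{eq:DH_uniform_boundedness}, and weak-$*$ closed by Lemma~\ref{lem:closure_measurable_selections}, hence weak-$*$ compact. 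Sion's minimax theorem (applied with $\mathcal{D}H[u]$ in the role of the compact variable) therefore gives $\min_{b\in\mathcal{D}H[u]}\sup_{w\in V}\Phi(w,b)=\sup_{w\in V}\inf_{b\in\mathcal{D}H[u]}\Phi(w,b)=0$, the outer minimum being attained — $b\mapsto\sup_w\Phi(w,b)$ is weak-$*$ lower semicontinuous on a compact set — at some $b^*\in\mathcal{D}H[u]$. Then $\sup_{w\in V}\Phi(w,b^*)=0$, and since $w\mapsto\Phi(w,b^*)$ is linear on the vector space $V$, a supremum equal to zero forces it to vanish identically, i.e.\ $(m,b^*:D^2w)_\Omega=\pair{G}{w}_{V^*\times V}$ for all $w\in V$. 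Setting $-\overline{a}:=b^*\in\mathcal{D}H[u]$ recovers~\eqref{eq:KFP_inclusion}, and combined with the common HJB equation this shows $(u,m)$ is a PDI solution.

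\emph{Main obstacle.} I expect the crux to be the minimax step, together with the measurable maximal selection and the interchange of supremum and integral feeding into it: one must identify the weak-$*$ topology on $L^\infty(\Omega;\Rddsym)$ in which $\mathcal{D}H[u]$ is simultaneously convex, compact, and a set on which $\Phi$ stays continuous, and the interchange genuinely exploits $m\geq 0$. By contrast, the limit $t\downarrow 0$ in the VI is routine once monotonicity of the difference quotients and the uniform Lipschitz bound~\eqref{eq:H_Lipschitz} are in hand, and the easy direction is immediate. The point of the VI formulation is exactly to turn the recovery of a subdifferential selection into a consequence of a minimax identity rather than of any continuity property of the multifunction $\mathcal{D}H$, which by Remark~\ref{rem:no_strong_continuity} is unavailable.
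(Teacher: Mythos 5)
Your proposal is correct, and it takes a route that the paper itself flags as an alternative: as noted in Remark~\ref{remark: minimax theorems}, the hard direction of Theorem~\ref{thm:PDI_VI_equivalence} is exactly a minimax identity, and you prove it by invoking Sion's minimax theorem directly. The paper instead deliberately avoids quoting a minimax theorem and gives an elementary, self-contained argument: it introduces $\mathcal{J}(A)=\max_{v\in B_V}\mathcal{L}(A,v)$, minimizes $\mathcal{J}$ over the weak-$*$ compact convex set $\mathcal{D}H[u]$, and then shows $\mathcal{J}(A_0)=0$ via a Riesz-representation and quadratic-polynomial computation. Your minimax step and the paper's $\mathcal{J}$-argument are two proofs of the same identity.

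You also differ from the paper in the intermediate step that establishes $\sup_w\inf_b\Phi(w,b)=0$. The paper's Lemma~\ref{lem:VI_directional_selection} tests the VI at $u\pm j^{-1}v$, extracts weak-$*$ limits $-\overline{a}_\pm\in\mathcal{D}H[u]$ using Lemma~\ref{lem:closure_measurable_selections} and the boundedness~\eqref{eq:DH_uniform_boundedness}, and then applies the intermediate value theorem over the segment in the convex set $\mathcal{D}H[u]$, obtaining for each $v$ an element that gives an \emph{equality}. You instead pass to the one-sided directional derivative via monotone convergence of the convex difference quotients, identify it as the support function of $\partial_M H(x,D^2u(x))$, and exchange supremum and integral using nonnegativity of $m$ plus a measurable \emph{maximal} selection. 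This yields only the one-sided bound $\pair{G}{w}\leq\sup_{b\in\mathcal{D}H[u]}(m,b:D^2w)_\Omega$, but that is all the minimax needs. Your route uses one additional measurable-selection fact (a measurable argmax of the Carath\'eodory map $(x,A)\mapsto A:D^2w(x)$ over the measurable compact-valued multifunction $x\mapsto\partial_M H(x,D^2u(x))$) which the paper avoids at this particular step, though both rely on measurable-selection theory elsewhere. One small point: you take $\sup$ over all of $V$, so $\sup_w\Phi(\cdot,b)$ takes values in $\{0,+\infty\}$; you therefore need the extended-real-valued form of Sion's theorem (or, equivalently, the Hahn--Banach separation of $0$ from the weak-$*$ compact convex image of $\mathcal{D}H[u]$ in $V^*$). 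Restricting to the closed unit ball $B_V$, as in Remark~\ref{remark: minimax theorems}, keeps the values finite and sidesteps this. Neither issue is a gap; both your proof and the paper's are valid.
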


The proof of Theorem~\ref{thm:PDI_VI_equivalence} is given in Section~\ref{section: equivalence proof} below.
Observe that, in Theorem~\ref{thm:PDI_VI_equivalence}, a PDI solution $(u,m)$ is shown to be a VI solution if $m$ is additionally assumed to be nonnegative in $\Omega$.
In practice, the nonnegativity of $m$ will be obtained by means of a comparison principle, cf.\ Section~\ref{sec:comparison} below.
In particular, under the further conditions that Assumption~\ref{assumption: Cordes} holds and that $G\in V^*$ is nonnegative in the sense of distributions, the equivalence of the notions of PDI and VI solutions is complete.

\begin{corollary}\label{cor:PDI_VI_equivalence_complete}
Let $\Omega$ be an open, bounded, convex subset of $\R^d$, $d\geq 1$. 
Let $\mathcal{A}$ be a compact metric space, let $a\in C( \overline{\Omega}\times \mathcal{A};\Rddsymp)$ satisfy Assumption~\ref{assumption: Cordes}, and let $f\in C(\overline{\Omega}\times \mathcal{A})$.
Let $G\in V^*$ be nonnegative in the sense of distributions.
Then a pair $(u,m)$ is a solution of the PDI problem~\eqref{eq:MFG_PDI} if and only if $(u,m)$ is a solution of the VI problem~\eqref{eq:MFG_VI}.
\end{corollary}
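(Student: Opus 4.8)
The plan is to obtain the corollary from Theorem~\ref{thm:PDI_VI_equivalence} by supplying the one missing ingredient in each of the two implications, the extra hypotheses of the corollary (convexity of $\Omega$, Assumption~\ref{assumption: Cordes}, and nonnegativity of $G$) being used only to guarantee nonnegativity of the density.

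First, the implication from VI solutions to PDI solutions is immediate and requires no new argument: by definition, a solution $(u,m)$ of the VI problem~\eqref{eq:MFG_VI} satisfies $m\in L^2_+(\Omega)$, and the hypotheses of Theorem~\ref{thm:PDI_VI_equivalence} ($\Omega$ bounded and open, $\mathcal{A}$ a compact metric space, $a$ and $f$ continuous) are all subsumed by those of the corollary, so the converse part of Theorem~\ref{thm:PDI_VI_equivalence} shows directly that $(u,m)$ solves the PDI problem~\eqref{eq:MFG_PDI}. Neither Assumption~\ref{assumption: Cordes} nor the sign condition on $G$ is needed for this direction.

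Second, for the implication from PDI solutions to VI solutions, in view of the first part of Theorem~\ref{thm:PDI_VI_equivalence} it suffices to verify that any PDI solution $(u,m)\in V\times L^2(\Omega)$ automatically satisfies $m\geq 0$ a.e.\ in $\Omega$. Let $-\overline{a}\in\mathcal{D}H[u]$ be a selection for which~\eqref{eq:KFP_inclusion} holds. By Lemma~\ref{lem:Cordes_subdifferential} together with Assumption~\ref{assumption: Cordes}, $\overline{a}(x)\in\Cclass$ for a.e.\ $x\in\Omega$, so $\overline{a}$ is a bounded, uniformly elliptic coefficient satisfying the Cordes condition. Equation~\eqref{eq:KFP_inclusion} states precisely that $m$ is a very weak solution of the adjoint KFP equation $D^2\colon(-\overline{a}m)=G$ in $\Omega$ with homogeneous Dirichlet boundary data, tested against all $v\in V$, and by hypothesis $G\in V^*$ is nonnegative in the sense of distributions. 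Applying the comparison principle for KFP equations with Cordes coefficients on bounded convex domains, Corollary~\ref{cor:kfp_comparison}, with the zero function as a subsolution, yields $m\geq 0$ a.e.\ in $\Omega$. The first part of Theorem~\ref{thm:PDI_VI_equivalence} then shows that $(u,m)$ is a VI solution, which completes the equivalence.

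The genuine content is entirely carried by the two results invoked: the minimax-based equivalence of Theorem~\ref{thm:PDI_VI_equivalence}, and the comparison principle of Corollary~\ref{cor:kfp_comparison}, which is where both the convexity of $\Omega$ and the Cordes structure of $\overline{a}$ play an essential role — since $\overline{a}$ is generically discontinuous, the Cordes condition is exactly what restores well-posedness and a maximum principle for the nondivergence-form adjoint operator. Given these ingredients, the corollary is pure bookkeeping, and the only point deserving a moment's care is checking that the very weak formulation~\eqref{eq:KFP_inclusion} satisfied by $m$ falls within the class of data and solutions for which Corollary~\ref{cor:kfp_comparison} is stated.
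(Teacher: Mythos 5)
Your proposal is correct and follows exactly the paper's own argument: invoke Theorem~\ref{thm:PDI_VI_equivalence} for the equivalence, then use Lemma~\ref{lem:Cordes_subdifferential} together with the comparison principle of Corollary~\ref{cor:kfp_comparison} to supply the nonnegativity of $m$ in the PDI-to-VI direction. The only minor stylistic quibble is the phrase ``with the zero function as a subsolution'' --- Corollary~\ref{cor:kfp_comparison} is stated directly as a sign-preservation result rather than in subsolution/supersolution terms, so this gloss is unnecessary, though harmless.
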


The proof of Corollary~\ref{cor:PDI_VI_equivalence_complete} is given in Section~\ref{sec:comparison} after showing a suitable comparison principle for solutions of the KFP equation.

\begin{remark}
We emphasize that Theorem~\ref{thm:PDI_VI_equivalence} does not require many of the other assumptions that will be used later in this work.
In particular, Theorem~\ref{thm:PDI_VI_equivalence} does not require the assumptions of uniform ellipticity or the Cordes condition in Assumption~\ref{assumption: Cordes}, which only play a later role in the existence and uniqueness results.
The proof of Theorem~\ref{thm:PDI_VI_equivalence} also does not require any properties of the coupling term $F$, nor does it require convexity of the domain~$\Omega$.
As will soon become apparent, the main ingredients in the proof of Theorem~\ref{thm:PDI_VI_equivalence} are merely the nonemptiness, convexity, and uniform boundedness of the sets $\mathcal{D}H[v]$ for $v\in V$, along with the closed graph property expressed in Lemma~\ref{lem:closure_measurable_selections}.
We therefore expect that the VI formulation of MFG systems may be a useful tool in many other contexts and other functional settings.
\end{remark}

\subsection{Existence of a solution}

The second main result of this work is the existence of a PDI solution and, equivalently, a VI solution for the problem, still under the hypotheses that $\Omega$ is convex, that $G$ is nonnegative, and that Assumption~\ref{assumption: Cordes} holds.
Since Corollary~\ref{cor:PDI_VI_equivalence_complete} holds under these hypotheses, the notions of PDI and VI solution coincide.
The principal additional assumption to those above for existence of a solution is that $F\colon L^2(\Omega)\to L^2(\Omega)$ is completely continuous.

\begin{theorem}[Existence of a solution]
\label{thm: MFG existence}
Let $\Omega$ be an open, bounded, convex subset of $\R^d$, $d\geq 1$. 
Let $\mathcal{A}$ be a compact metric space, let $a\in C( \overline{\Omega}\times \mathcal{A};\Rddsymp)$ satisfy Assumption~\ref{assumption: Cordes}, and let $f\in C(\overline{\Omega}\times \mathcal{A})$.
Let $F\colon L^2(\Omega)\to L^2(\Omega)$ be completely continuous.
Let $G\in V^*$ be nonnegative in the sense of distributions.
Then there exists a solution $(u,m)\in V\times L^2_+(\Omega)$ of the PDI problem~\eqref{eq:MFG_PDI} and, equivalently, of the VI problem~\eqref{eq:MFG_VI}.
Furthermore, any solution $(u,m)$ satisfies the \emph{a priori} bounds
\begin{subequations}\label{eq:solution_boundedness}
    \begin{align}
    \norm{m}_{L^2(\Omega)}& \leq C \norm{G}_{V^*}, \label{eq:solution_boundedness_m}\\
    \norm{u}_{H^2(\Omega)} &\leq C, \label{eq:solution_boundedness_u}
    \end{align}
\end{subequations}
where the above constants depend only on $d$, $\diam(\Omega)$, $\varepsilon$, $\underline{\nu}$, $\overline{\nu}$, and, in the case of~\eqref{eq:solution_boundedness_u}, additionally on $F$, $\|f\|_{C(\overline{\Omega}\times \mathcal{A})}$, and on $\norm{G}_{V^*}$.
\end{theorem}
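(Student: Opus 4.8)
The plan is to solve the coupled system by a set-valued fixed-point argument built on the VI formulation~\eqref{eq:MFG_VI}: passing through the VI rather than the PDI is precisely what makes the fixed-point map convex-valued and closed-graphed in spite of the failure of strong compactness for selections of $\mathcal{D}H$ (cf.\ Remark~\ref{rem:no_strong_continuity}). Throughout I would rely on the well-posedness results for the HJB and KFP equations with Cordes coefficients on convex domains recalled in Section~\ref{sec:HJB_KFP_cordes}: (i) for each $g\in L^2(\Omega)$ the HJB equation $H[u]=g$ has a unique strong solution $u=\mathcal{H}(g)\in V$, the map $\mathcal{H}\colon L^2(\Omega)\to V$ is Lipschitz, and $\norm{\mathcal{H}(g)}_{H^2(\Omega)}\le C(\norm{g}_{L^2(\Omega)}+\norm{f}_{C(\overline{\Omega}\times\mathcal{A})})$ with $C$ depending only on $d$, $\diam(\Omega)$, $\varepsilon$, $\underline{\nu}$, $\overline{\nu}$; (ii) for every measurable $\overline{a}\colon\Omega\to\Cclass$ and every $G\in V^*$ there is a unique $m\in L^2(\Omega)$ with $(m,-\overline{a}:D^2v)_\Omega=\pair{G}{v}_{V^*\times V}$ for all $v\in V$, and it satisfies $\norm{m}_{L^2(\Omega)}\le C_{\mathrm{KFP}}\norm{G}_{V^*}$ with $C_{\mathrm{KFP}}$ depending only on $d$, $\diam(\Omega)$, $\varepsilon$, $\underline{\nu}$, $\overline{\nu}$, in particular independently of $\overline{a}$; and (iii) the comparison principle of Corollary~\ref{cor:kfp_comparison}, whereby $G\ge 0$ in the sense of distributions forces this $m$ to satisfy $m\ge 0$ a.e.\ in $\Omega$.

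With $R\coloneqq C_{\mathrm{KFP}}\norm{G}_{V^*}$, set $K\coloneqq\{m\in L^2_+(\Omega):\norm{m}_{L^2(\Omega)}\le R\}$, a nonempty, convex, weakly compact subset of $L^2(\Omega)$ on which the weak topology is metrizable since $L^2(\Omega)$ is separable. Define $\Phi\colon K\rightrightarrows K$ by setting, for $m_0\in K$, $u_0\coloneqq\mathcal{H}(F[m_0])\in V$ and
\[
\Phi(m_0)\coloneqq\bigl\{\widetilde{m}\in K:\ \pair{G}{v-u_0}_{V^*\times V}\le\bigl(\widetilde{m},\,H[v]-H[u_0]\bigr)_\Omega\ \ \forall\,v\in V\bigr\}.
\]
I would then verify the hypotheses of the Kakutani--Fan--Glicksberg fixed-point theorem. \emph{Nonemptiness:} choose $-\overline{a}\in\mathcal{D}H[u_0]$ (nonempty); by Lemma~\ref{lem:Cordes_subdifferential}, $\overline{a}(x)\in\Cclass$ a.e., so (ii) yields $m\in L^2(\Omega)$ solving $(m,-\overline{a}:D^2v)_\Omega=\pair{G}{v}_{V^*\times V}$ for all $v\in V$ with $\norm{m}_{L^2(\Omega)}\le R$, and (iii) gives $m\ge 0$, hence $m\in K$; evaluating that identity at $v$ and at $u_0$ and combining with the pointwise subgradient inequality $-\overline{a}(x):(D^2v(x)-D^2u_0(x))\le H[v](x)-H[u_0](x)$ multiplied by $m\ge 0$ shows $m\in\Phi(m_0)$ (this is the elementary computation underlying Lemma~\ref{lem:PDI_solution_are_VI_solution}). \emph{Convex, weakly closed values:} $\Phi(m_0)$ is the intersection of the convex weakly compact set $K$ with the half-spaces $\{\widetilde{m}:(\widetilde{m},H[v]-H[u_0])_\Omega\ge\pair{G}{v-u_0}_{V^*\times V}\}$, $v\in V$, which are weakly closed because $\widetilde{m}\mapsto(\widetilde{m},H[v]-H[u_0])_\Omega$ is weakly continuous.

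For the \emph{closed graph}, suppose $m_0^j\rightharpoonup m_0$ in $K$, $\widetilde{m}^j\in\Phi(m_0^j)$, and $\widetilde{m}^j\rightharpoonup\widetilde{m}$. Complete continuity of $F$ gives $F[m_0^j]\to F[m_0]$ strongly in $L^2(\Omega)$, hence $u_0^j\coloneqq\mathcal{H}(F[m_0^j])\to u_0\coloneqq\mathcal{H}(F[m_0])$ in $V$ by Lipschitz continuity of $\mathcal{H}$, hence $H[v]-H[u_0^j]\to H[v]-H[u_0]$ strongly in $L^2(\Omega)$ for each fixed $v\in V$ by Lipschitz continuity of $H[\cdot]\colon V\to L^2(\Omega)$; using this together with the weak convergence and uniform $L^2$-bound of $\{\widetilde{m}^j\}$ and with $\pair{G}{v-u_0^j}_{V^*\times V}\to\pair{G}{v-u_0}_{V^*\times V}$, I pass to the limit in the defining inequality for each $v\in V$, which, since $K$ is weakly closed, gives $\widetilde{m}\in\Phi(m_0)$. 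Kakutani--Fan--Glicksberg then provides $m^*\in\Phi(m^*)$; with $u^*\coloneqq\mathcal{H}(F[m^*])$, the pair $(u^*,m^*)$ satisfies $H[u^*]=F[m^*]$ a.e.\ in $\Omega$ together with the variational inequality~\eqref{eq:KFP_VI}, so it solves the VI problem~\eqref{eq:MFG_VI}, and by Corollary~\ref{cor:PDI_VI_equivalence_complete} it also solves the PDI problem~\eqref{eq:MFG_PDI}. For the \emph{a priori} bounds, any solution is in particular a PDI solution (Corollary~\ref{cor:PDI_VI_equivalence_complete}), so its density solves $(m,-\overline{a}:D^2v)_\Omega=\pair{G}{v}_{V^*\times V}$ for some $-\overline{a}\in\mathcal{D}H[u]$ with $\overline{a}(x)\in\Cclass$ a.e.\ (Lemma~\ref{lem:Cordes_subdifferential}); then~\eqref{eq:solution_boundedness_m} follows from (ii), boundedness of $F$ on bounded sets controls $\norm{F[m]}_{L^2(\Omega)}$ in terms of $F$ and $\norm{G}_{V^*}$, and~\eqref{eq:solution_boundedness_u} follows from $u=\mathcal{H}(F[m])$ and the estimate in (i).

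The crux of the argument, and the reason the VI reformulation is indispensable, is the closed-graph step: the densities $\widetilde{m}^j$ enter only through the pairing $(\widetilde{m}^j,H[v]-H[u_0^j])_\Omega$, whose second argument converges \emph{strongly} in $L^2(\Omega)$, so that the weak-$*$ limits of any selections $\overline{a}_j\in\mathcal{D}H[u_0^j]$, which need not possess any strongly convergent subsequence (cf.\ Remark~\ref{rem:no_strong_continuity}), never have to be paired against the merely weakly convergent $\widetilde{m}^j$. The same structure makes the convexity of $\Phi(m_0)$ automatic, whereas a set-valued map defined directly from the PDI~\eqref{eq:MFG_PDI} would not obviously be convex-valued. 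The remaining work, namely tracking the precise dependence of the constants in~\eqref{eq:solution_boundedness} on the data, is routine once the HJB and KFP estimates of Section~\ref{sec:HJB_KFP_cordes} are in hand.
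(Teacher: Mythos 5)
Your proposal is correct and follows essentially the same route as the paper: a Kakutani fixed-point argument applied to the set-valued map defined via the VI formulation on the weakly compact ball, with nonemptiness of the images established through the PDI perspective (selecting $-\overline{a}\in\mathcal{D}H[u_0]$, solving the KFP, and invoking the comparison principle), closed graph established via complete continuity of $F$ and Lipschitz stability of the HJB solution map, and the \emph{a priori} bounds derived from the Cordes KFP and HJB estimates. The only cosmetic difference is that you define $\Phi$ directly on $K\coloneqq\mathcal{B}\cap L^2_+(\Omega)$ rather than factoring it as $S\circ T$ as the paper does, but the content is identical.
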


The proof of Theorem~\ref{thm: MFG existence} is given in Section~\ref{section: well-posed}.

\begin{remark}
We leave the form of the dependence on $F$ and $\norm{G}_{V^*}$ of the constant of the \emph{a priori} bound~\eqref{eq:solution_boundedness_u} implicit since we have not specified a growth condition on $F$.
However, the bound can be made explicit if $F$ obeys some specific growth condition, since one can then simply use the bound~\eqref{eq:HJB_boundedness} from Lemma~\ref{lemma: HJB existence} below.
For example, if one assumes a growth condition of the form $\norm{F[m]}_{L^2(\Omega)}\leq C\norm{m}_{L^2(\Omega)}^\kappa$ for some $\kappa \geq 0$, then one can show that $\norm{u}_{H^2(\Omega)}\leq C\left(1+\norm{G}_{V^*}^\kappa\right)$, for a constant that now only depends on $d$, $\diam(\Omega)$, $\varepsilon$, $\underline{\nu}$, $\overline{\nu}$, and $\|f\|_{C(\overline{\Omega}\times \mathcal{A})}$.
\end{remark}

\subsection{Uniqueness of the solution}\label{sec:uniqueness_theorem}

Similar to the results on existence of solutions, we place ourselves again in the setting of Corollary~\ref{cor:PDI_VI_equivalence_complete}, whereby the notions of PDI and VI solution coincide.
Under the well-known Lasry--Lions monotonicity condition on the coupling term $F$, cf.~\cite{LasryLions2006i,LasryLions2006ii,LasryLions2007}, we are able to show the uniqueness of a solution for the PDI problem~\eqref{eq:MFG_PDI}, and equivalently of the VI problem~\eqref{eq:MFG_VI}, even when $H$ is not required to be differentiable.

\begin{assumption}
\label{assumption: strict monotonicity}
We assume that the operator $F\colon L^2(\Omega)\to L^2(\Omega)$ is \emph{strictly monotone} on $L^2_+(\Omega)$, i.e.\ for all $m_1, m_2 \in L^2_+(\Omega)$, if $\left( F[m_1] - F[m_2], m_1 - m_2\right)_\Omega \leq 0 $ then $m_1=m_2$.
\end{assumption}

We emphasize that $F$ is only required to be strictly monotone on the subset of nonnegative functions in $L^2_+(\Omega)$.

\begin{theorem}[Uniqueness of the solution]\label{thm:MFG_uniqueness}
Let $\Omega$ be a bounded convex open subset of $\R^d$.
Let $\mathcal{A}$ be a compact metric space, let $a\in C(\overline{\Omega}\times \mathcal{A})$ satisfy Assumption~\ref{assumption: Cordes}, and let $f\in C(\overline{\Omega}\times \mathcal{A})$.
Let the Hamiltonian $H$ be defined by~\eqref{eqn:H_def}.
Let $G\in V^*$ be nonnegative in the sense of distributions.
If the operator $F\colon L^2(\Omega)\to L^2(\Omega)$ satisfies Assumption~\ref{assumption: strict monotonicity}, then any solution $(u,m) \in V \times L^2(\Omega)$ of the PDI problem~\eqref{eq:MFG_PDI} and, equivalently, of the VI problem~\eqref{eq:MFG_VI}, is unique.
\end{theorem}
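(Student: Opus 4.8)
The plan is to prove uniqueness by the classical Lasry--Lions argument adapted to the VI formulation, which is more convenient than the PDI formulation for this purpose. Suppose $(u_1,m_1)$ and $(u_2,m_2)$ are two solutions of the VI problem~\eqref{eq:MFG_VI}; by Corollary~\ref{cor:PDI_VI_equivalence_complete} we know the densities $m_1,m_2\in L^2_+(\Omega)$, so Assumption~\ref{assumption: strict monotonicity} is applicable. First I would test the variational inequality~\eqref{eq:KFP_VI} for the first solution with the choice $v=u_2$, and the variational inequality for the second solution with the choice $v=u_1$. This gives
\begin{equation*}
\pair{G}{u_2-u_1}_{V^*\times V} \leq \left(m_1, H[u_2]-H[u_1]\right)_\Omega, \qquad
\pair{G}{u_1-u_2}_{V^*\times V} \leq \left(m_2, H[u_1]-H[u_2]\right)_\Omega.
\end{equation*}
Adding these two inequalities, the $G$-terms cancel and one obtains
\begin{equation*}
0 \leq \left(m_1-m_2,\, H[u_2]-H[u_1]\right)_\Omega.
\end{equation*}

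Next I would use the HJB equations~\eqref{eq:HJB_strong_form_2}, namely $H[u_1]=F[m_1]$ and $H[u_2]=F[m_2]$ a.e.\ in $\Omega$, to substitute $H[u_2]-H[u_1]=F[m_2]-F[m_1]$ into the displayed inequality. This yields $0 \leq \left(m_1-m_2,\, F[m_2]-F[m_1]\right)_\Omega$, i.e.\ $\left(F[m_1]-F[m_2],\, m_1-m_2\right)_\Omega \leq 0$. By the strict monotonicity hypothesis (Assumption~\ref{assumption: strict monotonicity}), applicable since $m_1,m_2\in L^2_+(\Omega)$, this forces $m_1=m_2$ in $L^2(\Omega)$.

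Finally, to conclude that $u_1=u_2$ as well: since $m_1=m_2$, the HJB equations give $H[u_1]=F[m_1]=F[m_2]=H[u_2]$ a.e.\ in $\Omega$, so both $u_1$ and $u_2$ are strong solutions in $V$ of the same HJB equation $H[w]=g$ with $g:=F[m_1]\in L^2(\Omega)$ and homogeneous Dirichlet data. Under Assumption~\ref{assumption: Cordes}, the well-posedness theory for fully nonlinear HJB equations with Cordes coefficients on bounded convex domains (which I would invoke from the results summarized in Section~\ref{sec:HJB_KFP_cordes}, e.g.\ Lemma~\ref{lemma: HJB existence}) guarantees that such a strong solution in $V$ is unique; hence $u_1=u_2$. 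This completes the proof.

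The main subtlety — and the only place where something beyond bookkeeping is needed — is the very last step: one must ensure that the uniqueness of strong $H^2\cap H^1_0$ solutions of the scalar HJB equation under the Cordes condition is genuinely available as a cited result, rather than only existence. This is precisely the role of the Cordes condition established in~\cite{SmearsSuli2013,SmearsSuli2014,SmearsSuli2016}, where the operator $v\mapsto H[v]$ is shown to be, after a suitable renormalization, Lipschitz and strongly monotone from $V$ into $L^2(\Omega)$, which gives uniqueness at once. Everything else — the cancellation of the $G$-terms, the substitution via the HJB equations, and the appeal to strict monotonicity — is routine. I would also remark that the argument is symmetric in the two solutions and uses only the VI inequality tested against the other solution's value function, so no regularity beyond $u_i\in V$, $m_i\in L^2_+(\Omega)$ is required.
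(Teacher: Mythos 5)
Your proposal is correct and follows essentially the same route as the paper: the paper isolates the density-uniqueness step as Lemma~\ref{lem:VI_density_uniqueness} (with the identical Lasry--Lions cancellation and substitution via the HJB equations) and then concludes $u_1=u_2$ from the uniqueness of strong $V$-solutions of the HJB equation in Lemma~\ref{lemma: HJB existence}, exactly as you do. The one small inaccuracy is your phrasing that Corollary~\ref{cor:PDI_VI_equivalence_complete} gives $m_1,m_2\in L^2_+(\Omega)$ for VI solutions -- that nonnegativity is already built into the definition of a VI solution; the role of the corollary is rather to reduce uniqueness for the PDI problem to uniqueness for the VI problem.
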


The proof of Theorem~\ref{thm:MFG_uniqueness} is given in Section~\ref{sec:uniqueness} below.
The first step towards its proof is to show that any two pairs of solutions necessarily have the same density. 
The argument is essentially the well-known one of Lasry and Lions~\cite{LasryLions2006i,LasryLions2006ii,LasryLions2007}, which takes an especially simple and elegant form when working with VI solutions, as we immediately demonstrate below.

\begin{lemma}\label{lem:VI_density_uniqueness}
Suppose that the operator $F\colon L^2(\Omega)\to L^2(\Omega)$ satisfies Assumption~\ref{assumption: strict monotonicity}. 
If $(u_i, m_i) \in V \times L^2_+(\Omega)$, $i\in\{1,2\}$, are both solutions of the VI problem~\eqref{eq:MFG_VI}, then $m_1=m_2$.
\end{lemma}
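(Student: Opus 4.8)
The plan is to exploit the VI formulation~\eqref{eq:KFP_VI} of both solutions together with the matching HJB equations, in the spirit of the classical Lasry--Lions uniqueness argument. Since $(u_1,m_1)$ is a VI solution, taking the test function $v=u_2$ in~\eqref{eq:KFP_VI} gives
\[
\pair{G}{u_2-u_1}_{V^*\times V} \leq \left(m_1, H[u_2]-H[u_1]\right)_\Omega.
\]
Symmetrically, testing the VI for $(u_2,m_2)$ with $v=u_1$ yields
\[
\pair{G}{u_1-u_2}_{V^*\times V} \leq \left(m_2, H[u_1]-H[u_2]\right)_\Omega.
\]
Adding these two inequalities causes the $G$ terms to cancel exactly, leaving
\[
0 \leq \left(m_1-m_2, H[u_2]-H[u_1]\right)_\Omega.
\]

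Next I would invoke the HJB equations~\eqref{eq:HJB_strong_form_2}: since $H[u_i]=F[m_i]$ in $\Omega$ for $i\in\{1,2\}$, we may substitute $H[u_2]-H[u_1] = F[m_2]-F[m_1]$ into the right-hand side, obtaining
\[
0 \leq \left(m_1-m_2, F[m_2]-F[m_1]\right)_\Omega = -\left(F[m_1]-F[m_2], m_1-m_2\right)_\Omega,
\]
that is, $\left(F[m_1]-F[m_2], m_1-m_2\right)_\Omega \leq 0$. Since both $m_1,m_2\in L^2_+(\Omega)$, Assumption~\ref{assumption: strict monotonicity} applies directly and forces $m_1=m_2$, which completes the proof.

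I do not expect any substantive obstacle here: the argument is purely algebraic once the VI inequalities are written down, and the key structural features---the exact cancellation of the duality pairing with $G$ upon symmetrizing, and the identification of $H[u_i]$ with $F[m_i]$ through the strong-form HJB equation---are immediate from the definition of a VI solution. The only point requiring minor care is that the test functions $u_1,u_2$ are admissible in the respective variational inequalities, which holds because a VI solution has value function in $V$ and~\eqref{eq:KFP_VI} is required to hold for all $v\in V$. The real content of the lemma, and the reason it is stated separately, is that it reduces the uniqueness of the full pair to the uniqueness of the value function given a fixed density, which is handled subsequently via the HJB equation and the comparison/uniqueness theory for the fully nonlinear operator with Cordes coefficients.
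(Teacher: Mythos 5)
Your proof is correct and follows essentially the same approach as the paper: test each VI with the other solution's value function, add the inequalities so the $G$ terms cancel, substitute the HJB identities $H[u_i]=F[m_i]$, and invoke strict monotonicity. The paper writes this as a single chain of (in)equalities rather than adding two inequalities, but the argument is identical.
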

\begin{proof}
Under the hypotheses above, we have
\begin{equation}
\begin{split}
0 & \hspace{8pt}=\hspace{8pt}\angled{G, u_2 - u_1}_{V^*\times V} + \angled{G, u_1 - u_2}_{V^*\times V} 
\\ &\underset{\eqref{eq:KFP_VI}}{\leq} (m_1,H[u_2]-H[u_1])_\Omega+(m_2,H[u_1]-H[u_2])_\Omega
\\ & \underset{\eqref{eq:HJB_strong_form_2}}{=} - (F[m_1]-F[m_2],m_1-m_2)_\Omega.
\end{split}
\end{equation}
Thus $(F[m_1]-F[m_2],m_1-m_2)_\Omega\leq 0$ which implies that $m_1=m_2$ by Assumption~\ref{assumption: strict monotonicity}.
\end{proof}

The remainder of the proof of Theorem~\ref{thm:MFG_uniqueness}, which is detailed in Section~\ref{sec:uniqueness} below, then consists of showing that any pair of VI solutions must also have the same value function, which will be obtained from the uniqueness of strong solutions in $V$ of HJB equations with Cordes coefficients.

\begin{remark}
The uniqueness result above does not suppose, or imply, the uniqueness of elements from $\mathcal{D}H[u]$ that appear in the KFP inclusion.
Indeed, it is known from the analysis of MFG PDI systems in the quasilinear setting that the uniqueness of these measurable selections is not generally guaranteed. 
See~\cite{OsborneSmears2024i} for further discussion and examples.
As a further point, it is also known from examples in the quasilinear setting~\cite{OsborneSmears2025iii} that uniqueness of the solution is not expected if the coupling $F$ is only assumed to be monotone, rather than strictly monotone.
A brief explanation is that the case $F\equiv 0$, which is monotone but not strictly monotone, decouples the problem, resulting in a game of uncoupled players with possibly nonunique optimal controls, for which we cannot generally expect uniqueness of the density.
\end{remark}

Thus, in light of the results above, we deduce the existence and uniqueness of a solution of the PDI problem~\eqref{eq:MFG_PDI} and, equivalently, of the VI problem~\eqref{eq:MFG_VI}, whenever the hypotheses of Theorems~\ref{thm: MFG existence} and~\ref{thm:MFG_uniqueness} are satisfied.

\begin{remark}
It is possible to drop the hypothesis that $G$ should be nonnegative in Theorem~\ref{thm:MFG_uniqueness} if one is only interested in the uniqueness of solutions of the VI problem~\eqref{eq:MFG_VI}.
This is because the nonnegativity of the density is required as part of the definition of a VI solution.
\end{remark}

\section{Equivalence of the PDI and VI problems}\label{section: equivalence proof}

In this section, we prove the equivalence of the PDI and VI problems, as stated in Theorem~\ref{thm:PDI_VI_equivalence}. 
Throughout this section, the existence of a solution is assumed, given that the existence of solutions is the subject of Theorem~\ref{thm: MFG existence}.
The proof that PDI solutions with nonnegative densities are also VI solutions is quite straightforward, as we now show.

\begin{lemma}\label{lem:PDI_solution_are_VI_solution}
Suppose that $(u,m)\in V\times L^2(\Omega)$ is a solution of the PDI problem~\eqref{eq:MFG_PDI} and that $m\geq 0$ a.e.\ in $\Omega$.
Then $(u,m)$ is a solution of the VI problem~\eqref{eq:MFG_VI}.
\end{lemma}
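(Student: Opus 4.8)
The plan is to convert the pointwise inclusion $-\overline{a}(x)\in\partial_M H(x,D^2 u(x))$ into the variational inequality by testing the definition of the subdifferential against the Hessian of an arbitrary competitor, then integrating against the nonnegative density, and finally using the KFP inclusion twice to recognise the resulting left-hand side as $\pair{G}{v-u}_{V^*\times V}$. Since the HJB equation~\eqref{eq:HJB_strong_form} is literally the same in both formulations, and since the hypothesis $m\geq 0$ a.e.\ gives $m\in L^2_+(\Omega)$ as required by the VI problem, it only remains to derive~\eqref{eq:KFP_VI} from~\eqref{eq:KFP_inclusion}.

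First I would fix an arbitrary $v\in V$. By definition of the partial subdifferential, and since $-\overline{a}(x)\in\partial_M H(x,D^2u(x))$ for a.e.\ $x\in\Omega$, choosing the test matrix $\widetilde{M}=D^2 v(x)$ yields
\begin{equation*}
H(x,D^2 u(x)) - \overline{a}(x):\bigl(D^2 v(x)-D^2 u(x)\bigr)\leq H(x,D^2 v(x))\qquad\text{for a.e.\ }x\in\Omega,
\end{equation*}
i.e.\ $-\overline{a}(x):\bigl(D^2 v(x)-D^2 u(x)\bigr)\leq H[v](x)-H[u](x)$ a.e.\ in $\Omega$. All terms here are integrable: $\overline{a}\in L^\infty(\Omega;\Rddsym)$ by~\eqref{eq:DH_uniform_boundedness}, $D^2 u,D^2 v\in L^2(\Omega)$ since $u,v\in V$, and $H[u],H[v]\in L^2(\Omega)$ since $H[\cdot]\colon V\to L^2(\Omega)$ is well-defined. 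I would then multiply by $m(x)\geq 0$ — which preserves the inequality pointwise — and integrate over $\Omega$, obtaining
\begin{equation*}
\bigl(m,-\overline{a}:D^2(v-u)\bigr)_\Omega\leq \bigl(m,H[v]-H[u]\bigr)_\Omega.
\end{equation*}

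Finally, I would identify the left-hand side. Testing the KFP inclusion~\eqref{eq:KFP_inclusion} with the admissible choices $v$ and $u\in V$ in turn gives $\bigl(m,-\overline{a}:D^2 v\bigr)_\Omega=\pair{G}{v}_{V^*\times V}$ and $\bigl(m,-\overline{a}:D^2 u\bigr)_\Omega=\pair{G}{u}_{V^*\times V}$, so by linearity the left-hand side above equals $\pair{G}{v-u}_{V^*\times V}$. Since $v\in V$ was arbitrary, this is precisely~\eqref{eq:KFP_VI}, and together with~\eqref{eq:HJB_strong_form_2} (which coincides with~\eqref{eq:HJB_strong_form}) and $m\in L^2_+(\Omega)$ we conclude that $(u,m)$ solves the VI problem~\eqref{eq:MFG_VI}. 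There is no genuine obstacle in this direction; the only point needing a moment's care is checking integrability of the pointwise subdifferential inequality before multiplying by $m$ and integrating, which is immediate from the membership $-\overline{a}\in\mathcal{D}H[u]\subset L^\infty(\Omega;\Rddsym)$ and $m\in L^2(\Omega)$.
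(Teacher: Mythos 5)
Your proof is correct and follows essentially the same route as the paper's: test the pointwise subdifferential inequality with the competitor's Hessian, multiply by the nonnegative density and integrate, then use the KFP identity to recognise the left-hand side as the duality pairing with $G$. The only cosmetic difference is that the paper tests~\eqref{eq:KFP_inclusion} once with $w-u$ directly, whereas you test it with $v$ and $u$ separately and invoke linearity; this is the same computation.
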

\begin{proof}
By the hypothesis that $(u,m)$ solves~\eqref{eq:MFG_PDI}, there exists an $\overline{a}\in L^\infty(\Omega;\Rddsym)$ such that $-\overline{a}\in \mathcal{D}H[u]$ and $\pair{G}{v}_{V^*\times V} = \left(m, -\overline{a}:D^2 v\right)_\Omega$ for all $v\in V$. 
Let $w\in V$ be arbitrary and set $v=w-u$.
Since $-\overline{a} \in \mathcal{D}H[u]$, it follows from the definition of the subdifferential and from $-\overline{a}(x)\in \partial_M H(x,D^2 u(x))$ for a.e.\ $x\in\Omega$ that
\[
\begin{aligned}
-\overline{a}:D^2 v &= -\overline{a}:D^2(w-u) \leq H[w]-H[u] &&\text{a.e.\ in }\Omega.
\end{aligned}
\] 
The hypotheses that $m\in L^2(\Omega)$ and $m\geq 0$ a.e.\ in $\Omega$ then imply that
\begin{equation*}
\pair{G}{w-u}_{V^*\times V} = \left(m,-\overline{a}:D^2(w-u)\right)_\Omega \leq  \left( m , H[w]-H[u]\right)_\Omega.
\end{equation*} 
Since $w\in V$ is arbitrary, this shows that $(u,m)$ is a solution of the VI problem~\eqref{eq:MFG_VI}.
\end{proof}

Our next goal is to show that the converse statement is true, i.e.\ any VI solution is a PDI solution.
Thus, if we suppose that $(u,m)$ is a given VI solution, the task is to show the existence of an element $-\overline{a}\in \mathcal{D}H[u]$ such that~\eqref{eq:KFP_inclusion} holds.
The first step in this direction is the following lemma.
\begin{lemma}\label{lem:VI_directional_selection}
Suppose that $(u,m)\in V\times L^2_+(\Omega)$ is a solution of the VI problem~\eqref{eq:MFG_VI}.
Then, for each $v\in V$, there exists an $\overline{a}$ such that $-\overline{a}\in \mathcal{D}H[u]$ and
\begin{equation}\label{eq:VI_directional_selection_0}
    \left( m, -\overline{a}:D^2 v \right)_\Omega = \pair{G}{v}_{V^*\times V}.
\end{equation}
\end{lemma}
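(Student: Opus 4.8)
The plan is to fix $v \in V$ and consider, for $t > 0$, the competitor $u + tv$ in the variational inequality~\eqref{eq:KFP_VI}. This yields $\pair{G}{tv}_{V^*\times V} \leq (m, H[u+tv]-H[u])_\Omega$, and dividing by $t$ and letting $t \downarrow 0^+$ gives $\pair{G}{v}_{V^*\times V} \leq (m, H'[u;v])_\Omega$, where $H'[u;v]$ denotes the pointwise one-sided directional derivative $\lim_{t\downarrow 0^+} t^{-1}(H(x,D^2u(x)+tD^2v(x)) - H(x,D^2u(x)))$; the limit exists pointwise by convexity of $H$ in $M$, is bounded by $L_H|D^2v|$ by~\eqref{eq:H_Lipschitz}, and passes inside the integral by dominated convergence (using $m \in L^2_+(\Omega)$ and $D^2v \in L^2(\Omega;\Rddsym)$). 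Applying the same argument with $-v$ in place of $v$ and using that $H'[u;-v] = -H_-'[u;v]$ (the left directional derivative) together with convexity gives $-\pair{G}{v}_{V^*\times V} \leq (m, H'[u;-v])_\Omega \leq -(m, H'[u;v])_\Omega$ — wait, this needs care since $H'[u;-v] \geq -H'[u;v]$ pointwise, so the inequality goes the wrong way directly. Instead I would combine the two inequalities as follows.

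The cleaner route: for the fixed $v$, the two applications of the VI give
\begin{equation*}
\pair{G}{v}_{V^*\times V} \leq (m, H'[u;v])_\Omega \quad\text{and}\quad \pair{G}{-v}_{V^*\times V} \leq (m, H'[u;-v])_\Omega.
\end{equation*}
Now I use the pointwise support-function representation of the directional derivative of a convex function: since $\partial_M H(x,D^2u(x))$ is the closed bounded convex set from Proposition~\ref{prop:subdifferential_characterization}, one has $H'[u;v](x) = \max\{ A : D^2v(x) : A \in \partial_M H(x,D^2u(x))\}$ and $H'[u;-v](x) = \max\{ -A:D^2v(x) : A\in \partial_M H(x,D^2u(x))\} = -\min\{A:D^2v(x)\}$. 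The key step is then to produce a single measurable selection $-\overline{a} \in \mathcal{D}H[u]$ that realizes equality in~\eqref{eq:VI_directional_selection_0}. For this I would pass to a minimax / Sion argument over the compact convex set $\mathcal{D}H[u]$ (weak-$*$ compact and convex by the discussion after Lemma~\ref{lem:closure_measurable_selections}) and the linear functional $(-\overline{a}) \mapsto (m, -\overline{a}:D^2v)_\Omega$: the VI says this functional, maximized over $\mathcal{D}H[u]$, is $\geq \pair{G}{v}$, and minimized (i.e. using $-v$) the same functional is $\leq \pair{G}{v}$; by the intermediate value property along a line segment in the convex set $\mathcal{D}H[u]$, or more directly by noting the set of attainable values $\{(m,-\overline{a}:D^2v)_\Omega : -\overline{a}\in\mathcal{D}H[u]\}$ is a compact interval in $\R$ (continuous image of a weak-$*$ compact connected set) containing $\pair{G}{v}$ in its closure, there exists $-\overline{a} \in \mathcal{D}H[u]$ achieving exactly $\pair{G}{v}_{V^*\times V}$.

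Let me restate the middle piece carefully, as it is the crux. Define $\phi : \mathcal{D}H[u] \to \R$ by $\phi(-\overline{a}) := (m, -\overline{a}:D^2v)_\Omega$; this is affine and weak-$*$ continuous, and $\mathcal{D}H[u]$ is weak-$*$ compact and convex, so $\phi(\mathcal{D}H[u])$ is a compact interval $[p_-, p_+] \subset \R$. The VI applied with competitor $u + tv$, $t>0$, divided by $t$ and letting $t\downarrow 0$, gives $\pair{G}{v}_{V^*\times V} \leq \sup_{-\overline{a}\in\mathcal{D}H[u]} \phi(-\overline{a})$ — here one uses that $\max_{A\in\partial_M H(x,D^2u(x))} A:D^2v(x)$ is itself attained by a measurable selection (by a measurable maximum theorem, as already invoked in constructing elements of $\mathcal{D}H[u]$ via~\cite[Theorem~10]{SmearsSuli2014}), so the pointwise max integrates to $\sup_\phi = p_+$; hence $\pair{G}{v} \leq p_+$. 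Applying the VI with $u - tv$ (equivalently competitor in direction $-v$) gives symmetrically $-\pair{G}{v} \leq p_+(-v) = -p_-$, i.e. $\pair{G}{v} \geq p_-$. Therefore $\pair{G}{v}_{V^*\times V} \in [p_-,p_+] = \phi(\mathcal{D}H[u])$, so there exists $-\overline{a}\in\mathcal{D}H[u]$ with $\phi(-\overline{a}) = \pair{G}{v}_{V^*\times V}$, which is precisely~\eqref{eq:VI_directional_selection_0}.

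The main obstacle I anticipate is the measurable-selection bookkeeping: justifying that the pointwise supremum $x \mapsto \max_{A\in\partial_M H(x,D^2u(x))} A:D^2v(x)$ equals $\sup\{\phi(-\overline{a}) : -\overline{a}\in\mathcal{D}H[u]\}$, i.e. interchanging an essential-supremum over selections with a pointwise maximum. This follows from a standard measurable maximum theorem (the set-valued map $x \rightrightarrows \partial_M H(x,D^2u(x))$ has a measurable graph and compact values, so the argmax admits a measurable selection), combined with $m \geq 0$ a.e. so that maximizing the integrand pointwise maximizes the integral. Everything else — existence of pointwise directional derivatives, the dominated convergence passage to the limit in $t$, and the interval/connectedness argument — is routine given convexity of $H$ and~\eqref{eq:H_Lipschitz}.
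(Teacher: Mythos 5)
Your plan is correct and reaches the right conclusion, but it is a genuinely different route from the paper's. Both approaches fix $v$, perturb in directions $\pm v$, derive bounds sandwiching $\pair{G}{v}_{V^*\times V}$ between the extreme values of the affine weak-$*$ continuous functional $\phi\colon -\overline{a}\mapsto (m,-\overline{a}:D^2v)_\Omega$ on the convex weak-$*$ compact set $\mathcal{D}H[u]$, and then conclude by an intermediate-value argument. The difference is in how the sandwich is obtained. The paper picks, for each $j$, an arbitrary measurable selection $-\overline{a}_{\pm j}\in\mathcal{D}H[u\pm j^{-1}v]$ at the \emph{perturbed} point, uses the subdifferential inequality centred at $u\pm j^{-1}v$ to bound $H[u\pm j^{-1}v]-H[u]\leq \pm j^{-1}(-\overline{a}_{\pm j}):D^2v$, combines with the VI, and then invokes Lemma~\ref{lem:closure_measurable_selections} to pass to a weak-$*$ limit selection $-\overline{a}_\pm\in\mathcal{D}H[u]$ with $\phi(-\overline{a}_-)\leq\pair{G}{v}\leq\phi(-\overline{a}_+)$. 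You instead pass to the directional-derivative limit first, identify $H'[u;\pm v]$ with the pointwise support function of $\partial_M H(x,D^2u(x))$, and then require a measurable maximum theorem to produce a measurable argmax selection realizing $p_+=\sup_{\mathcal{D}H[u]}\phi$. Both are sound, but the paper's order of operations — select at the perturbed point, then limit — avoids exactly the measurable-selection bookkeeping you correctly flag as the main extra work: one never has to establish measurability of a nested argmax. Your route is conceptually transparent (making the convex-analytic identity between directional derivatives and support functions explicit) but technically heavier, and the citation of~\cite[Theorem~10]{SmearsSuli2014} is not quite on point — that result gives a measurable selection of $\Lambda(x,M)$, not of the further argmax of the linear functional $A\mapsto A:D^2v(x)$ over $\partial_M H(x,D^2u(x))$, so a separate measurable-maximum argument would still be needed to close that gap.
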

\begin{proof}
Let $v\in V$ be given. For each integer $j\in \N$, we define
\begin{equation}
w_{+j} \coloneqq u + \frac{1}{j} v, \quad w_{-j} \coloneqq u - \frac{1}{j}v.
\end{equation}
By hypothesis, $(u,m)$ satisfies~\eqref{eq:KFP_VI}, so $\pair{G}{w_{\pm j} - u}_{V^*\times V}\leq \left( m ,  H[w_{\pm j}]-H[u]\right)_\Omega$ for all $ j\in \N$.
Choose an arbitrary element $-\overline{a}_{\pm j}\in \mathcal{D}H[w_{\pm j} ] $ for each $j\in \N$. 
Then, we have $H[w_{\pm j}]-H[u]\leq -\overline{a}_{\pm j}:D^2(w_{\pm j}-u)$ a.e.\ in $\Omega$, for all $j\in \N$.
Since $m\geq 0$ a.e.\ in $\Omega$, it follows that
\begin{align*}
 \frac{1}{j}\pair{G}{v}_{V^*\times V} &\leq \frac{1}{j} \left( m, -\overline{a}_{+j}:D^2v \right)_\Omega,
&
\frac{1}{j}\pair{G}{-v}_{V^*\times V} &\leq \frac{1}{j}\left( m , -\overline{a}_{-j}:D^2(-v) \right)_\Omega ,
\end{align*}
for each $j\in \N$.
Thus, after simplification, we obtain
\begin{equation}\label{eq:VI_directional_selection_1}
\begin{aligned}
\left( m , -\overline{a}_{-j}:D^2 v \right)_\Omega  \leq  \pair{G}{v}_{V^*\times V} \leq \left( m, -\overline{a}_{+j}:D^2v \right)_\Omega &&& \forall j\in \N.
\end{aligned}
\end{equation}
Since $w_{\pm j} \to u$ in $V$ as $j\to \infty$, and since $-\overline{a}_{\pm j}\in \mathcal{D}H[w_{\pm j}]$ for each $j\in \N$, we deduce from Lemma~\ref{lem:closure_measurable_selections} and from the uniform boundedness of the sets $\mathcal{D}H[w_{\pm j}]$, cf.~\eqref{eq:DH_uniform_boundedness} above, that there exist elements $-\overline{a}_+\in \mathcal{D}H[u]$ and $-\overline{a}_-\in \mathcal{D}H[u]$, and a subsequence such that $\overline{a}_{+j_k}\overset{*}{\rightharpoonup} \overline{a}_+$ and $\overline{a}_{-j_k} \overset{*}{\rightharpoonup} \overline{a}_-$ in $L^\infty(\Omega;\Rddsym)$ as $k\to \infty$.
We can then pass to the limit in~\eqref{eq:VI_directional_selection_1} to find that
\begin{equation}\label{eq:VI_directional_selection_2}
\left(m, -\overline{a}_-:D^2 v \right)_\Omega \leq \pair{G}{v}_{V^*\times V} \leq  \left( m,  -\overline{a}_+:D^2 v\right)_\Omega.
\end{equation}
For each $\lambda\in[0,1]$, let $\overline{a}_{\lambda}\coloneqq \lambda \overline{a}_+ + (1-\lambda)\overline{a}_-$.
Convexity of $\mathcal{D}H[u]$ implies that $-\overline{a}_\lambda\in \mathcal{D}H[u]$ for all $\lambda\in[0,1]$.
Combining the inequalities in~\eqref{eq:VI_directional_selection_2} with the intermediate value theorem, we conclude that there exists a $\lambda \in [0,1]$ such that $\left(m, - \overline{a}_{\lambda}:D^2 v \right)_\Omega = \pair{G}{v}_{V^*\times V}$.
This completes the proof of the lemma.
\end{proof}

\begin{remark}
We emphasize that, in Lemma~\ref{lem:VI_directional_selection}, the element $\overline{a}$ appearing in~\eqref{eq:VI_directional_selection_0} is allowed to depend on the given test function $v$. 
Thus, Lemma~\ref{lem:VI_directional_selection} does not immediately verify~\eqref{eq:KFP_inclusion}.
However, if one is only interested in the case where $H$ is assumed to be differentiable, in which case $\mathcal{D}H[u]$ is a singleton set, then~\eqref{eq:KFP_inclusion} can be deduced immediately from Lemma~\ref{lem:VI_directional_selection}.
\end{remark}

We now prove Theorem~\ref{thm:PDI_VI_equivalence} on the equivalence between the notions of PDI and VI solutions.

\begin{proof}[Proof of Theorem~\ref{thm:PDI_VI_equivalence}]
Suppose that $(u,m)\in V\times L^2_+(\Omega)$ is a given solution of the VI problem~\eqref{eq:MFG_VI}.
Let the functional $\mathcal{L}\colon \mathcal{D}H[u]\times V\to\R$ be defined by
\begin{equation}\label{eq:PDI_VI_equivalence_0}
\mathcal{L}(A,v)\coloneqq \left( m, A:D^2 v\right)_\Omega  -  \pair{G}{v}_{V^*\times V} \quad \forall (A,v)\in \mathcal{D}H[u]\times V.
\end{equation}
The functional $\mathcal{L}$ is affine in its first argument and linear in its second argument.
Next, we define the functional $\mathcal{J} \colon \mathcal{D}H[u] \to \R_{\geq 0}$ by 
\begin{equation}\label{eq:PDI_VI_equivalence_1}
\mathcal{J}(A) \coloneqq \max_{v\in B_V} \mathcal{L}(A,v) \quad \forall A \in \mathcal{D}H[u],
\end{equation}
where $B_V\coloneqq \{v\in V:\norm{v}_{H^2(\Omega)}\leq 1\}$ denotes the closed unit ball in $V$.
It is clear that the maximum is achieved in~\eqref{eq:PDI_VI_equivalence_1}, and that $\mathcal{J}$ is nonnegative since $B_V$ contains the origin and $\mathcal{L}(A,0)=0$ for any $A\in \mathcal{D}H[u]$.
Note also that the boundedness of $\mathcal{D}H[u]$ in $L^\infty(\Omega;\Rddsym)$, cf.~\eqref{eq:DH_uniform_boundedness}, implies that $\mathcal{J}$ is also uniformly bounded on $\mathcal{D}H[u]$.
Recall that $\mathcal{D}H[u]$ is convex and sequentially weak-$*$ compact as a subset of $L^\infty(\Omega;\Rddsym)$ by Lemma~\ref{lem:closure_measurable_selections}.
It is also straightforward to check that $\mathcal{J}$ is sequentially weak-$*$ lower-semicontinuous on $\mathcal{D}H[u]$. 
Therefore, there exists an $A_0\in \mathcal{D}H[u]$ such that
\begin{equation}\label{eq:PDI_VI_equivalence_2}
\mathcal{J}(A_0) = \min_{A\in \mathcal{D}H[u]} \mathcal{J}(A).
\end{equation}
We will show shortly below that $\mathcal{J}(A_0)=0$, which will be sufficient to conclude that $(u,m)$ is a solution of the PDI problem~\eqref{eq:MFG_PDI}. Indeed, once we have shown that $\mathcal{J}(A_0)=0$, we can set $\overline{a} \coloneqq -A_0$ and deduce that $-\overline{a} \in \mathcal{D}H[u]$ and $(m , -\overline{a}:D^2 v)_{\Omega} \leq \pair{G}{v}_{V^*\times V}$ for all  $v \in V$, which, by linearity in $v$, is equivalent to $(m , -\overline{a}:D^2 v)_{\Omega} = \pair{G}{v}_{V^*\times V}$ for all $v\in V$. Thus it is enough to show that $\mathcal{J}(A_0)=0$ to deduce that the pair $(u,m)$ solves the PDI problem~\eqref{eq:MFG_PDI}.

We now show that $\mathcal{J}(A_0)=0$.
Since $V$ is a Hilbert space and since the map $v\mapsto \mathcal{L}(A,v)$ is a bounded linear functional on $V$, the Riesz representation theorem shows that there exists a unique $v_0\in V$ such that $(v_0,v)_V=\mathcal{L}(A_0,v)$ for all $v\in V$, where $(\cdot, \cdot)_V$ denotes the $H^2$-inner product on $V$.
It is easy to check that $\norm{v_0}_{H^2(\Omega)}^2=\mathcal{L}(A_0,v_0)=\mathcal{J}(A_0)^2$.
We now apply Lemma~\ref{lem:VI_directional_selection} to find an $A_1\in \mathcal{D}H[u]$ such that
\begin{equation}\label{eq:PDI_VI_equivalence_5}
   \mathcal{L}(A_1,v_0)=\left( m, A_1 : D^2 v_0\right)_\Omega   - \pair{G}{v_0}_{V^*\times V} =0.
\end{equation}
Let $A_\lambda \coloneqq (1-\lambda)A_0+\lambda A_1$ for all $\lambda\in[0,1]$, and note that $A_\lambda\in \mathcal{D}H[u]$ since $\mathcal{D}H[u]$ is convex.
For each $\lambda\in [0,1]$, let $v_\lambda$ denote the unique element in $V$ that satisfies 
\begin{equation}\label{eq:PDI_VI_equivalence_6}
(v_\lambda ,v)_V = \mathcal{L}(A_\lambda,v)\quad \forall v\in V.
\end{equation}
By choosing $\lambda=1$ and the test function $v=v_0$ in~\eqref{eq:PDI_VI_equivalence_6} above, we deduce from~\eqref{eq:PDI_VI_equivalence_5} that $(v_1,v_0)_V=0$, i.e.\ $v_0$ and $v_1$ are orthogonal in $V$.
Note also that~\eqref{eq:PDI_VI_equivalence_6} implies that $\norm{v_\lambda}_{H^2(\Omega)}^2=\mathcal{J}(A_\lambda)^2$ for all $\lambda\in[0,1]$.
It is furthermore clear that $v_\lambda=(1-\lambda)v_0+\lambda v_1$ for all  $\lambda\in[0,1]$  since  $\mathcal{L}(A_\lambda,v)=(1-\lambda)\mathcal{L}(A_0,v)+\lambda\mathcal{L}(A_1,v)$ for all $v\in V$.
Therefore, by expanding the square, we find that, for all $\lambda\in[0,1]$,
\begin{equation}\label{eq:PDI_VI_equivalence_8}
    \begin{split}
\mathcal{J}(A_\lambda)^2 & = \norm{v_{\lambda}}^2_{H^2(\Omega)} = \norm{(1-\lambda)v_0+\lambda v_1}_{H^2(\Omega)}^2
\\ & = (1-\lambda)^2 \norm{v_0}_{H^2(\Omega)}^2 + 2(1-\lambda)\lambda (v_1,v_0)_V + \lambda^2 \norm{v_1}^2_{H^2(\Omega)}
\\ &= (1-\lambda)^2 \mathcal{J}(A_0)^2 + \lambda^2 \mathcal{J}(A_1)^2,
    \end{split}
\end{equation}
where, in passing to the last line above, we have used the orthogonality $(v_1,v_0)_V=0$ that was shown above.
We recall in passing that $(\cdot,\cdot)_V$ denotes the $H^2$-inner-product on $V$.
Thus~\eqref{eq:PDI_VI_equivalence_8} shows that the function $\lambda\mapsto \mathcal{J}(A_\lambda)^2$ is a quadratic polynomial in $\lambda$.
Since $A_0\in \mathcal{D}H[u]$ is a minimizer of the functional~$\mathcal{J}$ by~\eqref{eq:PDI_VI_equivalence_2}, it is then clear that
\begin{equation}\label{eq:PDI_VI_equivalence_9}
0 \leq \left.\frac{\mathrm{d}}{\mathrm{d}\lambda}\right|_{\lambda =0} \mathcal{J}(A_\lambda)^2 = - 2 \mathcal{J}(A_0)^2,
\end{equation}
and thus we conclude that $\mathcal{J}(A_0)=0$.
As explained above, this shows that $(u,m)$ is a solution of the PDI problem~\eqref{eq:MFG_PDI}.
\end{proof}

\begin{remark}
\label{remark: minimax theorems}
We have aimed to keep the techniques of the proof of Theorem~\ref{thm:PDI_VI_equivalence} as elementary as possible for the sake of simplicity and accessibility.
However, at a more fundamental level, the essence of the proof of Theorem~\ref{thm:PDI_VI_equivalence} can be seen as a verification of the minimax principle
\begin{equation}\label{eq:minimax}
\min_{A\in\mathcal{D}H[u]}\max_{v\in B_V}\mathcal{L}(A,v) = \max_{v\in B_V}\min_{A\in \mathcal{D}H[u]}\mathcal{L}(A,v),
\end{equation}
for the Lagrangian functional~$\mathcal{L}$ defined in~\eqref{eq:PDI_VI_equivalence_0} above.
Observe that it is sufficient to establish~\eqref{eq:minimax} to conclude that a VI solution $(u,m)$ is a PDI solution, since the left-hand side of~\eqref{eq:minimax} is necessarily nonnegative, and the right-hand side is nonpositive by Lemma~\ref{lem:VI_directional_selection}, and thus both sides must be identically zero.
Thus, Theorem~\ref{thm:PDI_VI_equivalence} can also deduced from well-known minimax theorems, see e.g.~\cite{EkelandTemam1999,Fan1952,Sion1958,Zeidler1986}.
Given that minimax theorems are available in many functional settings under very general structural assumptions, it appears likely that the equivalence between PDI and VI formulations of MFG systems can be generalized to a wider range of problems.
\end{remark}

\section{Elliptic operators with Cordes coefficients}\label{sec:HJB_KFP_cordes}

It is well-known that for a bounded convex domain~$\Omega$, solutions of Poisson's equation on~$\Omega$ with right-hand side in~$L^2(\Omega)$, coupled with a homogeneous Dirichlet boundary condition on~$\partial \Omega$, belong to the space $V\coloneqq H^2(\Omega)\cap H^1_0(\Omega)$, see for instance~\cite[Theorem~3.2.1.2, p.~147]{Grisvard2011}.
Furthermore, in \cite[Theorem~10, p.~391]{Kadlec1964}, Kadlec gave an exact constant for the bound on the solution, in particular,
\begin{equation}\label{eq:Kadlec}
|v|_{H^2(\Omega)} \leq \|\Delta v\|_{L^2(\Omega)} \quad\forall v\in V, 
\end{equation}
where the $H^2$-seminorm is defined by $\abs{v}_{H^2(\Omega)}^2\coloneqq \int_\Omega |D^2v|^2\mathrm{d}x$, with $|D^2v|$ denoting the Frobenius norm of the Hessian $D^2v$.
Indeed, the bound~\eqref{eq:Kadlec} is deduced from~\cite[Theorem~10, p.~391]{Kadlec1964} where the constant $\alpha$ appearing there simplifies to $\alpha=1$ in the case of the Laplacian.
Using the Poincar\'e inequality, it is then straightforward to verify that
\begin{equation}\label{eq:Kadlec_2}
\norm{v}_{H^2(\Omega)} \leq C\|\Delta v\|_{L^2(\Omega)} \quad\forall v\in V,
\end{equation}
where the constant $C$ depends only on $d$ and $\diam(\Omega)$.
As a remark, in the earlier papers~\cite{SmearsSuli2013,SmearsSuli2014}, the inequality~\eqref{eq:Kadlec} was named after Miranda and Talenti, following the reference~\cite{MaugeriPalagachevSoftova2000}, as the second author was not previously aware of~\cite{Kadlec1964}. 

\subsection{Fully nonlinear HJB equations}

In this section, we recall some known results from \cite{SmearsSuli2013,SmearsSuli2014} on the well-posedness of strong solutions in $V$ of fully nonlinear HJB equations with Cordes coefficients.
Note that well-posedness results of a similar nature are also available for time-dependent HJB equations~\cite{SmearsSuli2016}, and also Isaacs equations~\cite{KaweckiSmears2021,KaweckiSmears2022}.
We assume below that $\Omega$ is a bounded, convex open subset of $\R^d$, and that Assumption~\ref{assumption: Cordes} holds.
Recall that the Hamiltonian defines an operator $V\ni v\mapsto H[v]\in L^2(\Omega)$ defined in~\eqref{eq:H_op_def} above.
In this section, we consider the problem of finding a strong solution $u\in V$ of a fully nonlinear HJB equation of the form
\begin{equation}\label{eqn: HJB}
H[u] = \widetilde{F} \quad\text{in }\Omega,
\end{equation}
where $\widetilde{F}\in L^2(\Omega)$ is a given function. Observe that the homogeneous Dirichlet boundary condition $u=0$ on $\partial \Omega$ is encoded in the choice of space $V$ defined in~\eqref{eq:V_def} above.
The following result is a modest extension of \cite[Theorem~3]{SmearsSuli2014}.
\begin{lemma}\label{lemma: HJB existence}
Let $\Omega$ be a bounded convex open set in $\R^d$, $d\geq 1$.
Let $\mathcal{A}$ be a compact metric space, let $a\in C(\overline{\Omega}\times\mathcal{A};\Rddsymp)$ satisfy Assumption~\ref{assumption: Cordes}, and let $f\in C(\overline{\Omega}\times\mathcal{A})$.
Then, for any $\widetilde{F}\in L^2(\Omega)$, there exists a unique~$u \in V$ solving~\eqref{eqn: HJB} pointwise a.e.\ in~$\Omega$.
There exists a constant $C$, depending only on $d$, $\diam(\Omega)$, $\varepsilon$, $\underline{\nu}$, $\overline{\nu}$, and on $\norm{f}_{C(\overline{\Omega}\times\mathcal{A})}$, such that
\begin{equation}\label{eq:HJB_boundedness}
\norm{u}_{H^2(\Omega)}\leq C\left(1+\|\widetilde{F}\|_{L^2(\Omega)}\right).
\end{equation}
Furthermore, there exists a constant $C$, depending only on $d$, $\diam(\Omega)$, $\varepsilon$, $\underline{\nu}$ and $\overline{\nu}$, such that
\begin{equation} \label{eqn: HJB stability}
\|u_1 - u_2 \|_{H^2(\Omega)} \leq C \|\widetilde{F}_1 - \widetilde{F}_2\|_{L^2(\Omega)} \quad \forall \widetilde{F}_1,\,\widetilde{F}_2\in L^2(\Omega),
\end{equation}
where $u_i$ denotes the unique solution in $V$ of $H[u_i]=\widetilde{F}_i$ in $\Omega$, for each $i\in\{1,2\}$.
\end{lemma}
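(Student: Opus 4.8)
The plan is to prove Lemma~\ref{lemma: HJB existence} by the now-standard Cordes-condition renormalization, following the approach of~\cite{SmearsSuli2013,SmearsSuli2014}. The key observation is that under Assumption~\ref{assumption: Cordes} one can introduce, for each $(x,\alpha)$, the renormalization scalar $\gamma(x,\alpha) \coloneqq \Trace a(x,\alpha) / \abs{a(x,\alpha)}^2$, which is well-defined and bounded above and below by positive constants depending only on $d$, $\underline{\nu}$, $\overline{\nu}$. The Cordes condition~\eqref{eq:Cordes_condition} is exactly what is needed to guarantee that there is a constant $\lambda \in (0,1)$, depending only on $d$ and $\varepsilon$, such that for every symmetric matrix $N$,
\[
\abs{\gamma(x,\alpha)\, a(x,\alpha) : N - \Trace N} \leq \sqrt{1-\lambda}\, \abs{N}.
\]
This is the pointwise "near-Laplacian" estimate that makes the Laplacian a good preconditioner. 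Define the renormalized operator $F_\gamma[v](x) \coloneqq \sup_{\alpha \in \mathcal{A}} \gamma(x,\alpha)\left( -a(x,\alpha):D^2 v(x) - f(x,\alpha) \right)$; since $\gamma > 0$, the equation $F_\gamma[v] = 0$ has the same strong solutions as $H[v]=0$, and more generally one absorbs the right-hand side $\widetilde F$ appropriately (note $\gamma$ depends on $(x,\alpha)$, so one must be a little careful: one instead works with the equation written as $\sup_\alpha \gamma(x,\alpha)(-a(x,\alpha):D^2 v - f(x,\alpha) + \widetilde F(x)) = 0$ using that adding the same quantity $\gamma(x,\alpha)\widetilde F(x)$ inside the sup... actually this changes the operator, so instead one keeps $\widetilde F$ on the right and checks the Lipschitz/monotonicity estimates directly, which is how~\cite{SmearsSuli2014} handles it).

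First I would establish the crucial coercivity-type inequality: for all $v, w \in V$,
\[
\left(F_\gamma[v] - F_\gamma[w], \Delta(v-w)\right)_\Omega \geq (1-\sqrt{1-\lambda})\,\abs{v-w}_{H^2(\Omega)}^2 - \text{(lower order, controlled by Poincaré)},
\]
using the pointwise estimate above together with the elementary inequality $\abs{\sup_\alpha g_\alpha - \sup_\alpha h_\alpha} \leq \sup_\alpha \abs{g_\alpha - h_\alpha}$ applied to the difference $F_\gamma[v]-F_\gamma[w]$, and Kadlec's inequality~\eqref{eq:Kadlec} which gives $\abs{v}_{H^2(\Omega)} \leq \norm{\Delta v}_{L^2(\Omega)}$ on the convex domain. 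This is precisely where convexity of $\Omega$ enters. Combined with the Lipschitz bound $\norm{F_\gamma[v]-F_\gamma[w]}_{L^2(\Omega)} \leq C\abs{v-w}_{H^2(\Omega)}$, a standard Banach fixed-point / Browder--Minty argument on the map $T\colon V \to V$ defined by $\Delta(Tv) = \Delta v - \theta(F_\gamma[v] - \gamma\widetilde F)$ for a suitable relaxation parameter $\theta>0$ (solving the Poisson problem with Dirichlet data in $V$, which is well-posed on convex $\Omega$) yields existence and uniqueness of the strong solution $u \in V$. The stability estimate~\eqref{eqn: HJB stability} follows by testing the difference of the two equations against $\Delta(u_1-u_2)$ and using the same coercivity inequality; the a priori bound~\eqref{eq:HJB_boundedness} follows by testing against $\Delta u$ and using $F_\gamma[0]$ being controlled by $\norm{f}_{C(\overline\Omega\times\mathcal{A})}$.

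The main obstacle — and the reason this is only a "modest extension" of~\cite[Theorem~3]{SmearsSuli2014} rather than a verbatim citation — is that the original result is typically stated either for $\widetilde F = 0$ or with $\widetilde F$ absorbed into $f$, and the dependence of the renormalization factor $\gamma$ on $\alpha$ means one cannot simply replace $f(x,\alpha)$ by $f(x,\alpha) - \widetilde F(x)$ and read off a clean $L^2$-dependence on $\widetilde F$ without tracking constants. One must verify that the constant in~\eqref{eqn: HJB stability} is genuinely independent of $f$ (hence of $\widetilde F$), which it is, because the coercivity and Lipschitz constants depend only on the bounds on $a$ and on $\lambda$, not on $f$; and that~\eqref{eq:HJB_boundedness} correctly isolates the $\norm{f}_{C(\overline\Omega\times\mathcal{A})}$ and $\norm{\widetilde F}_{L^2(\Omega)}$ contributions. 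This is routine bookkeeping but needs to be done carefully. A secondary technical point is confirming that the relevant quantities $\gamma(x,\alpha)$, $F_\gamma[v]$, etc., are measurable in $x$ and that $F_\gamma[\cdot]\colon V \to L^2(\Omega)$ is well-defined and continuous — but this follows from the continuity of $a, f$ on the compact set $\overline\Omega\times\mathcal{A}$ exactly as for $H[\cdot]$ in~\eqref{eq:H_op_def}, so I would simply cite that. I would present the bulk of the argument by reduction to~\cite[Theorem~3]{SmearsSuli2014}, spelling out only the modifications needed to handle the $L^2$ right-hand side and to extract the stated forms of the constants.
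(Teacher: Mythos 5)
Your overall plan is the right one, and it matches the paper's proof in its essentials: Cordes renormalization by the scalar $\gamma(x,\alpha)=\Trace a(x,\alpha)/\abs{a(x,\alpha)}^2$, the pointwise ``near-Laplacian'' estimate~\eqref{eq:HJB_existence_2}, Kadlec's inequality~\eqref{eq:Kadlec} on the convex domain, strong monotonicity against $-\Delta v$, and Browder--Minty (or, equivalently, a contraction fixed-point). The stability and \emph{a priori} bounds then follow by the standard tests, exactly as you describe.

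However, there is a genuine gap in how you handle the right-hand side~$\widetilde F$, and it stems from a misplaced worry. You first consider putting the source inside the supremum — that is, working with $H_{\gamma,\widetilde F}[v](x)\coloneqq\sup_{\alpha\in\mathcal A}\bigl\{\gamma(x,\alpha)\bigl(-a(x,\alpha):D^2 v(x)-f(x,\alpha)-\widetilde F(x)\bigr)\bigr\}$ — and then reject this because ``this changes the operator.'' That concern is unfounded: of course the renormalized operator depends on $\widetilde F$, but the whole point is the equivalence of zero sets, which is exactly where the strict positivity and boundedness of $\gamma$ enter. For a.e.\ $x$, $H[u](x)=\widetilde F(x)$ iff $-a(x,\alpha):D^2u(x)-f(x,\alpha)-\widetilde F(x)\leq 0$ for every $\alpha$ with equality for some $\alpha$, and multiplying each term by the strictly positive $\gamma(x,\alpha)$ preserves both the sign and the existence of an equality case; hence $H[u]=\widetilde F$ a.e.\ iff $H_{\gamma,\widetilde F}[u]=0$ a.e. One then applies Browder--Minty to the single operator $u\mapsto\mathcal H_{\gamma,\widetilde F}[u]$ defined by testing $H_{\gamma,\widetilde F}[u]$ against $-\Delta v$, solving $\mathcal H_{\gamma,\widetilde F}[u]=0$; the monotonicity and Lipschitz constants are uniform in $\widetilde F$ (and in $f$), since they depend only on $\sup_\alpha\abs{I_d-\gamma a}\leq\sqrt{1-\varepsilon}$, and that uniformity is precisely what gives~\eqref{eqn: HJB stability} with a constant independent of $f$ and $\widetilde F$.

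The alternative you propose instead — ``keep $\widetilde F$ on the right'' and iterate $\Delta(Tv)=\Delta v-\theta(F_\gamma[v]-\gamma\widetilde F)$ — does not make sense as written: $\gamma=\gamma(x,\alpha)$ depends on the control variable $\alpha$, so $\gamma\widetilde F$ is not a well-defined function of $x$ and cannot appear outside the supremum. There is no $\alpha$-free $\gamma$ to put there. (There is also a sign slip in the displayed renormalized equation, which should carry $-\widetilde F(x)$ rather than $+\widetilde F(x)$ inside the supremum, but this is minor.) To repair the argument, go back to your first instinct: absorb $-\widetilde F$ inside the supremum with the $\gamma$ factor, verify the zero-set equivalence as above, and run Browder--Minty on $\mathcal H_{\gamma,\widetilde F}$; the rest of your outline then goes through unchanged.
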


\begin{proof}
The proof essentially follows~\cite[Section~3]{SmearsSuli2014}, so we outline the main ideas of the proof for completeness.
The principal idea is to apply the Browder--Minty theorem for the well-posedness of a strongly monotone operator equation that is obtained by renormalization of the nonlinear Hamiltonian and testing against the Laplacians of test functions.
We define the function $\gamma\in C(\overline{\Omega}\times \mathcal{A})$ by $\gamma(x,\alpha)\coloneqq \frac{\Trace a(x,\alpha)}{\abs{a(x,\alpha)}^2}$ for all $(x,\alpha)\in\overline{\Omega}\times\mathcal{A}$.
Note that continuity of $\gamma$ follows from uniform ellipticity~\eqref{eq:uniform_ellipticity}, and moreover that there exists a $\gamma_0>0$, depending only on $\underline{\nu}$ and $\overline{\nu}$, such that $\gamma\geq \gamma_0$ on $\overline{\Omega}\times\mathcal{A}$.
It is furthermore easy to verify that $\norm{\gamma}_{C(\overline{\Omega}\times\mathcal{A})}$ can be bounded solely in terms of the constants $\underline{\nu}$ and $\overline{\nu}$. 
Let $\widetilde{F}\in L^2(\Omega)$ be fixed but arbitrary.
We now define the operator $V\ni v\mapsto H_{\gamma,\widetilde{F}}[v]\in L^2(\Omega)$ by
\begin{equation}
H_{\gamma,\widetilde{F}}[v](x)\coloneqq \sup_{\alpha\in\mathcal{A}}\big\{\gamma(x,\alpha)\big(-a(x,\alpha):D^2v(x) - f(x,\alpha) - \widetilde{F}(x)\big)\big\} \quad \forall x\in \Omega.
\end{equation}
Observe that the operator is well-defined, i.e.\ $H_{\gamma,\widetilde{F}}[v]$ is Lebesgue measurable and belongs to $L^2(\Omega)$ for all $v\in V$ and all $\widetilde{F}\in L^2(\Omega)$, since the map $\overline{\Omega}\times \Rddsym\times \R\ni (x,M,z)\mapsto \sup_{\alpha\in\mathcal{A}}\left\{\gamma(x,\alpha)\left(-a(x,\alpha):M - f(x,\alpha) -z\right)\right\} \in \R $ is continuous in $x$ and Lipschitz continuous in $(M,z)$, and the functions $\gamma$, $a$ and $f$ are uniformly bounded over $\overline{\Omega}\times \mathcal{A}$.
Since $\mathcal{A}$ is compact, since the functions $\gamma$, $a$ and $f$ are continuous, and since $\gamma$ is bounded from above and also from below away from zero, i.e.\ $\gamma\geq \gamma_0>0$ on $\overline{\Omega}\times \mathcal{A}$ for some constant $\gamma_0$, it is follows that a function $u\in V$ solves $H[u]=\widetilde{F}$ pointwise a.e.\ in $\Omega $ if and only if $H_{\gamma,\widetilde{F}}[u]=0$ pointwise a.e.\ in $\Omega$. 
The argument to show this is from~\cite{SmearsSuli2014}, which we briefly recall here: for a.e.\ $x\in \Omega$, it is clear that $H[u](x)=\widetilde{F}(x)$ if, and only if, $-a(x,\alpha):D^2u(x) - f(x,\alpha)-\widetilde{F}(x)\leq 0$ for all $\alpha\in \mathcal{A}$, with equality achieved by some $\alpha$. This is equivalent to $\gamma(x,\alpha)\left(-a(x,\alpha):D^2u(x) - f(x,\alpha)-\widetilde{F}(x)\right)\leq 0$ for all $\alpha$, again with equality achieved by some $\alpha$, which is equivalent to $H_{\gamma,\widetilde{F}}[u](x)=0$.

Furthermore, since the Laplacian $\Delta\colon V\to L^2(\Omega)$ is bijective for convex $\Omega$, it is clear that $H_{\gamma,\widetilde{F}}[u]=0$ pointwise a.e.\ in $\Omega$ if and only if
\begin{equation}\label{eq:HJB_existence_1}
    \pair{\mathcal{H}_{\gamma,\widetilde{F}}[u]}{v}_{V^*\times V} = 0 \quad \forall v\in V,
\end{equation}
where the operator $\mathcal{H}_{\gamma,\widetilde{F}}\colon V\to V^*$ is defined by $\pair{\mathcal{H}_{\gamma,\widetilde{F}}[u]}{v}_{V^*\times V}\coloneqq (H_{\gamma,\widetilde{F}}[u],-\Delta v)_{\Omega} $ for all $v\in V$.
Thus the problems~\eqref{eqn: HJB} and~\eqref{eq:HJB_existence_1} are equivalent.

We now show show that $\mathcal{H}_{\gamma,\widetilde{F}}$ is strongly monotone and Lipschitz continuous on $V$, so the existence and uniqueness of the solution of~\eqref{eq:HJB_existence_1} follows from the Browder--Minty theorem~\cite[Theorem~10.49, p~364]{RenardyRogers2004}.
Under the Cordes condition~\eqref{eq:Cordes_condition}, we follow the arguments in~\cite[Lemma~1]{SmearsSuli2014} to find that
\begin{equation}\label{eq:HJB_existence_2}
\begin{split}
|H_{\gamma,\widetilde{F}}[w](x)-H_{\gamma,\widetilde{F}}[v](x)+\Delta(w-v)(x)| &\leq\sup_{\alpha\in\mathcal{A}}\abs{\left(I_d-\gamma(x,\alpha)a(x,\alpha)\right):D^2(w-v)(x)} 
\\ &\leq \sqrt{1-\varepsilon}\,\abs{D^2(w-v)(x)},
\end{split}
\end{equation}
for a.e.\ $x\in \Omega$, and all $w$, $v\in V$, where $I_d\in \Rdd$ denotes the identity matrix.
Therefore, we deduce from~\eqref{eq:Kadlec} and~\eqref{eq:HJB_existence_2} that $\mathcal{H}_{\gamma,\widetilde{F}}$ is strongly monotone on $V$ with
\begin{multline}\label{eq:HJB_existence_3}
\pair{\mathcal{H}_{\gamma,\widetilde{F}}[w] - \mathcal{H}_{\gamma,\widetilde{F}}[v]}{w-v}_{V^*\times V} 
\\ =\norm{\Delta(w-v)}_{L^2(\Omega)}^2-(H_{\gamma,\widetilde{F}}[w]-H_{\gamma,\widetilde{F}}[v]+\Delta(w-v),\Delta(w-v))_\Omega
\\  \geq \left(1-\sqrt{1-\varepsilon}\right)\norm{\Delta(w-v)}^2_{L^2(\Omega)}\geq \frac{1-\sqrt{1-\varepsilon}}{C^2}\norm{w-v}_{H^2(\Omega)}^2,
\end{multline}
where $C$ is the constant appearing in~\eqref{eq:Kadlec_2}.
It is also straightforward to check that $\mathcal{H}_{\gamma,\widetilde{F}}$ is Lipschitz continuous from $V$ to $V^*$.
Therefore, we may apply the Browder--Minty theorem to find that there exists a unique $u\in V$ that solves~\eqref{eq:HJB_existence_1}, and thus, equivalently, that solves~\eqref{eqn: HJB}.
The bound~\eqref{eq:HJB_boundedness} is easily found from 
\begin{equation}\label{eq:HJB_existence_5}
    \frac{1-\sqrt{1-\varepsilon}}{C^2}\norm{u}_{H^2(\Omega)}^2\leq - \pair{\mathcal{H}_{\gamma,\widetilde{F}}[0]}{u}_{V^*\times V}\leq C\big\lVert  H_{\gamma,\widetilde{F}}[0]\big\rVert_{L^2(\Omega)}\norm{u}_{H^2(\Omega)},
\end{equation}
which is a consequence of~\eqref{eq:HJB_existence_3}, and from the bound $|H_{\gamma,\widetilde{F}}[0](x)|\leq \norm{\gamma}_{C(\overline{\Omega}\times\mathcal{A})} (\norm{f}_{C(\overline{\Omega}\times\mathcal{A})}+|\widetilde{F}(x)|) $ for a.e.\ $x\in \Omega$. 
We now prove the Lipschitz continuity of the inverse map~\eqref{eqn: HJB stability}. 
For each $i\in\{1,2\}$, let $\widetilde{F}_i \in L^2(\Omega)$ be given, and let $u_i\in V$ denote the corresponding unique solution of $H[u_i]=\widetilde{F}_i$ in $\Omega$, and, equivalently, of $\mathcal{H}_{\gamma,\widetilde{F}_i}[u_i]=0$ in $V^*$.
We use the strong monotonicity~\eqref{eq:HJB_existence_3} to find that
\begin{equation}\label{eq:HJB_existence_4}
\begin{split}
\frac{1}{C}\norm{u_1-u_2}_{H^2(\Omega)}^2
& \leq  \pair{\mathcal{H}_{\gamma,\widetilde{F}_2}[u_1]-\mathcal{H}_{\gamma,\widetilde{F}_2}[u_2]}{ u_1-u_2 }_{V^*\times V} 
\\ & = \pair{\mathcal{H}_{\gamma,\widetilde{F}_2}[u_1]-\mathcal{H}_{\gamma,\widetilde{F}_1}[u_1]}{u_1-u_2}_{V^*\times V},
\end{split}
\end{equation}
where the constant $C$ depends only on $d$, $\diam(\Omega)$ and on $\varepsilon$, and where, in passing to the second equality above, we have used the fact that $\mathcal{H}_{\gamma,\widetilde{F}_2}[u_2]=0=\mathcal{H}_{\gamma,\widetilde{F}_1}[u_1]$. The inequality $|\sup_{\alpha} x^\alpha-\sup_{\alpha}y^\alpha|\leq\sup_{\alpha}|x^\alpha-y^\alpha|$, for arbitrary bounded sets of real numbers $\{x^\alpha\}_{\alpha \in \mathcal{A}}, \{y^\alpha\}_{\alpha \in \mathcal{A}} \subset \mbb{R}$, implies that $|H_{\gamma,\widetilde{F}_2}[u_1](x)-H_{\gamma,\widetilde{F}_2}[u_1](x)|\leq \sup_{\alpha\in\mathcal{A}}|\gamma(x,\alpha)(\widetilde{F}_2(x)-\widetilde{F}_1(x))|$ for a.e.\ $x\in \Omega$. 
Therefore $|H_{\gamma,\widetilde{F}_2}[u_1]-H_{\gamma,\widetilde{F}_1}[u_1]|\leq C|\widetilde{F}_1-\widetilde{F}_2|$ a.e.\ in $\Omega$, for a constant $C$ depending only on $\underline{\nu}$ and $\overline{\nu}$.
Therefore, we obtain~\eqref{eqn: HJB stability} by applying the Cauchy--Schwarz inequality to~\eqref{eq:HJB_existence_4}, and using the above bounds to simplify.
\end{proof}

\subsection{Kolmogorov--Fokker--Planck equations}

We now turn to KFP equations with discontinuous coefficients of the general form: find $m \in L^2(\Omega)$ such that
\begin{equation}\label{eqn: weak KFP}
    \begin{aligned}
    (m,-\overline{a}:D^2v)_\Omega=\pair{G}{v}_{V^*\times V} &&& \forall v\in V,
    \end{aligned}
\end{equation}
where $G\in V^*$, and $\overline{a}\in L^\infty(\Omega;\Rddsymp)$ are given as data, satisfying the condition $\overline{a}(x)\in\Cclass$ for a.e.\ $x\in \Omega$, where it is recalled that $\Cclass$ is the subset of matrices in $\Rddsymp$ that satisfy the uniform ellipticity condition~\eqref{eq:uniform_ellipticity} and the Cordes condition~\eqref{eq:Cordes_condition}.
Recall that~\eqref{eqn: weak KFP} corresponds to the weak form of the KFP equation $-D^2{:}(\overline{a} m)=G$ in $\Omega$ with $m=0$ on $\partial\Omega$, with $D^2{:}$ denoting the double divergence operator, after integration-by-parts.

\begin{lemma}
\label{lem: KFP well posed}
Let $\Omega$ be a bounded open convex subset of $\R^d$.
Let $\overline{a}\in L^\infty(\Omega;\Rddsymp)$ satisfy $\overline{a}(x)\in\Cclass$ for a.e.\ $x\in \Omega$.
Then, for each $G\in V^*$, there exists a unique $m \in L^2(\Omega)$ that solves~\eqref{eqn: weak KFP}.
Moreover, there exists a constant $C$, depending only $d$, $\diam(\Omega)$, $\varepsilon$, $\underline{\nu}$ and $\overline{\nu}$, such that
\begin{equation} \label{eqn: KFP L2 control}
     \|m\|_{L^2(\Omega)} \leq C\|G\|_{V^*}.
\end{equation}
\end{lemma}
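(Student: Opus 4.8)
The plan is to derive the well-posedness of~\eqref{eqn: weak KFP} by a duality argument, starting from the well-posedness of the \emph{primal} linear problem: given $g\in L^2(\Omega)$, find $w\in V$ with $-\overline{a}:D^2 w=g$ pointwise a.e.\ in $\Omega$. Accordingly, the first and principal step is to show that the bounded linear operator $\Phi\colon V\to L^2(\Omega)$ given by $\Phi w\coloneqq -\overline{a}:D^2 w$ is bijective. To this end, we adapt the renormalization argument underlying the proof of Lemma~\ref{lemma: HJB existence} to the present linear setting with a merely bounded measurable coefficient. Set $\gamma(x)\coloneqq \Trace\overline{a}(x)/\abs{\overline{a}(x)}^2$; by the uniform ellipticity~\eqref{eq:uniform_ellipticity} one has $\gamma_0\leq\gamma\leq\gamma_1$ a.e.\ in $\Omega$ for positive constants $\gamma_0,\gamma_1$ depending only on $\underline{\nu},\overline{\nu}$. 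Introduce the bilinear form $\mathcal{B}(w,v)\coloneqq(\gamma(-\overline{a}:D^2 w),-\Delta v)_\Omega$ on $V\times V$. Using the pointwise identity $\gamma(-\overline{a}:D^2 w)+\Delta w=(I_d-\gamma\overline{a}):D^2 w$, the Cordes bound $\abs{I_d-\gamma\overline{a}}\leq\sqrt{1-\varepsilon}$ a.e.\ that follows from~\eqref{eq:Cordes_condition} exactly as in~\eqref{eq:HJB_existence_2}, and the Miranda--Talenti/Kadlec inequalities~\eqref{eq:Kadlec}--\eqref{eq:Kadlec_2}, one checks that $\mathcal{B}$ is bounded on $V$ and coercive, with $\mathcal{B}(w,w)\geq(1-\sqrt{1-\varepsilon})\norm{\Delta w}_{L^2(\Omega)}^2\geq\tfrac{1-\sqrt{1-\varepsilon}}{C^2}\norm{w}_{H^2(\Omega)}^2$, where $C$ is the constant in~\eqref{eq:Kadlec_2}.

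By the Lax--Milgram theorem, for each $g\in L^2(\Omega)$ there is a unique $w\in V$ with $\mathcal{B}(w,v)=(\gamma g,-\Delta v)_\Omega$ for all $v\in V$. Since $\Delta\colon V\to L^2(\Omega)$ is surjective for convex $\Omega$, this is equivalent to $\gamma(-\overline{a}:D^2 w)=\gamma g$ a.e.\ in $\Omega$, hence, dividing by $\gamma\geq\gamma_0>0$, to $\Phi w=g$ a.e.\ in $\Omega$; this proves surjectivity of $\Phi$. Injectivity is immediate from coercivity, since $\Phi w=0$ forces $0=\mathcal{B}(w,w)\geq c\norm{w}_{H^2(\Omega)}^2$. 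Testing with $v=w$ and using $\norm{\Delta w}_{L^2(\Omega)}\leq C\norm{w}_{H^2(\Omega)}$ further yields $\norm{\Phi^{-1}g}_{H^2(\Omega)}\leq C_1\norm{g}_{L^2(\Omega)}$ with $C_1$ depending only on $d,\diam(\Omega),\varepsilon,\underline{\nu},\overline{\nu}$ (consistently with the bounded inverse theorem, which already guarantees that $\Phi^{-1}$ is bounded).

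With $\Phi\colon V\to L^2(\Omega)$ a bounded linear bijection in hand, we conclude by duality. Its adjoint $\Phi^*\colon L^2(\Omega)\to V^*$, characterized by $\pair{\Phi^* m}{v}_{V^*\times V}=(m,-\overline{a}:D^2 v)_\Omega$ for all $m\in L^2(\Omega)$ and $v\in V$, is then also a bounded bijection, with $(\Phi^*)^{-1}=(\Phi^{-1})^*$ and $\norm{(\Phi^*)^{-1}}=\norm{\Phi^{-1}}\leq C_1$. Since~\eqref{eqn: weak KFP} is precisely the equation $\Phi^* m=G$ in $V^*$, it has the unique solution $m=(\Phi^*)^{-1}G\in L^2(\Omega)$, which satisfies $\norm{m}_{L^2(\Omega)}\leq C_1\norm{G}_{V^*}$, i.e.~\eqref{eqn: KFP L2 control}. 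Equivalently, and avoiding adjoints, the functional $g\mapsto\pair{G}{\Phi^{-1}g}_{V^*\times V}$ is bounded on $L^2(\Omega)$ with norm at most $C_1\norm{G}_{V^*}$, so the Riesz representation theorem furnishes a unique $m\in L^2(\Omega)$ with $(m,\Phi v)_\Omega=\pair{G}{v}_{V^*\times V}$ for all $v\in V$, and surjectivity of $\Phi$ yields uniqueness.

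The only nontrivial point, and the main obstacle, is the bijectivity of the primal operator $\Phi$, which hinges on the coercivity of $\mathcal{B}$; this is exactly where the Cordes condition is indispensable and cannot be weakened to plain uniform ellipticity when $d\geq 3$, consistent with the known ill-posedness of general nondivergence-form operators with discontinuous coefficients. Everything else is soft functional analysis.
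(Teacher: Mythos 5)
Your proof is correct and takes essentially the same route as the paper: reduce to well-posedness of the primal nondivergence problem $-\overline{a}:D^2 w = g$ in $V$, which the paper cites from~\cite{SmearsSuli2013} and which you instead re-derive inline via the same Cordes/Kadlec renormalisation and Lax--Milgram argument that underlies that reference, and then conclude by a duality argument. The only difference is cosmetic: you phrase the duality step via the adjoint of a bounded bijection (equivalently, the Riesz representation), whereas the paper records the equivalent inf-sup conditions and invokes the Ne\v{c}as theorem.
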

\begin{proof}
We consider the pair of Hilbert spaces $L^2(\Omega)$ and $V$ and the bilinear form $B\colon L^2(\Omega)\times V\to \R$ defined by
\[B(m,v)\coloneqq (m,-\overline{a}:D^2v)_\Omega \quad \forall (m,v)\in L^2(\Omega)\times V.\]
It is clear that $B$ is bounded on $L^2(\Omega)\times V$ since $\overline{a}$ is essentially bounded on $\Omega$. 
By~\cite[Theorem~3]{SmearsSuli2013}, cf.\ also~\cite[Theorem~1.2.1]{MaugeriPalagachevSoftova2000}, for each $g\in L^2(\Omega)$ there exists a unique $v_g\in V$ such that $-\overline{a}:D^2v_g=g$ a.e.\ in $\Omega$, and furthermore $\norm{v_g}_{H^2(\Omega)}\leq C\norm{g}_{L^2(\Omega)}$ for a constant $C>0$ depending only on $d$, $\diam(\Omega)$, $\varepsilon$, $\underline{\nu}$ and $\overline{\nu}$.
This immediately implies the inf-sup conditions 
\begin{subequations}\label{eqn: infsup}
\begin{alignat}{2}
 \frac{1}{C}\norm{m}_{L^2(\Omega)} &\leq \sup_{v\in V\setminus\{0\}} \frac{B(m,v) }{\norm{v}_{H^2(\Omega)}} &\quad& \forall m\in L^2(\Omega), \label{eq:infsup_1}
 \\
 \frac{1}{C}\norm{v}_{H^2(\Omega)} &\leq \sup_{m\in L^2(\Omega)\setminus\{0\}}\frac{B(m,v)}{\norm{m}_{L^2(\Omega)}} &\quad& \forall v\in V,\label{eq:infsup_2}
\end{alignat}
\end{subequations}
with the same constant $C$ above.
Indeed,~\eqref{eq:infsup_1} follows from the choice $g=m$, and~$\eqref{eq:infsup_2}$ follows from the choice $g=-\overline{a}:D^2v$.
We may therefore apply the inf-sup theorem of Ne\v{c}as~\cite[Theorem~3.1]{Necas1962} to conclude that, for any $G\in V^*$, there exists a unique $m\in L^2(\Omega)$ that solves $B(m,v)=\pair{G}{v}_{V^*\times V}$ for all $v\in V$, i.e.\ $m$ solves~\eqref{eqn: weak KFP}. The bound~\eqref{eqn: KFP L2 control} follows immediately from~\eqref{eqn: infsup}.
\end{proof}

\subsection{Comparison principles for operators with Cordes coefficients}\label{sec:comparison}

The following result, which appears to be original, gives a comparison principle for nondivergence form elliptic equations with discontinuous coefficients that satisfy the Cordes condition, on general bounded convex domains with Dirichlet boundary conditions.

\begin{theorem}\label{thm:comparison_nondivergence}
Let $\Omega\subset \R^d$ be an open, bounded, convex set.
Let $\overline{a} \in L^\infty(\Omega;\Rddsymp)$ satisfy $\overline{a}(x)\in \Cclass$ for a.e.\ $x\in\Omega$. 
Let $ g \in L^2(\Omega) $ and let $ v_g \in V $ be the unique strong solution of $-\overline{a}:D^2 v_g=g$ in $\Omega$, $v_g=0$ on $\partial \Omega$.
If $g$ is nonnegative a.e.\ in $\Omega$, then $v$ is nonnegative a.e.\ in~$\Omega$.
\end{theorem}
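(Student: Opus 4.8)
The plan is to prove the result by an Aleksandrov--Bakelman--Pucci--type argument, whose key point is an observation, special to the nonnegativity of $g$, that makes the determinant term in the ABP estimate vanish and thereby removes the usual $L^d$ integrability requirement on second derivatives. Assume, for contradiction, that $\operatorname{ess\,inf}_\Omega v_g=-M<0$, let $\Gamma$ be the lower convex envelope of $v_g$ on $\Omega$, and let $\mathcal{C}:=\{x\in\Omega:v_g(x)=\Gamma(x)\}$ be the associated contact set. On $\mathcal{C}$ the graph of $v_g$ lies above that of the convex function $\Gamma$ and touches it, so $D^2 v_g\geq 0$ a.e.\ on $\mathcal{C}$; since $\overline{a}(x)$ is symmetric positive semidefinite, it follows that $\overline{a}:D^2 v_g\geq 0$ a.e.\ on $\mathcal{C}$. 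But the equation $-\overline{a}:D^2 v_g=g$ gives $\overline{a}:D^2 v_g=-g\leq 0$ a.e.\ in $\Omega$, so $g=0$ and $\overline{a}:D^2 v_g=0$ a.e.\ on $\mathcal{C}$. Invoking uniform ellipticity, $0=\overline{a}:D^2 v_g\geq\underline{\nu}\,\Trace D^2 v_g\geq 0$ a.e.\ on $\mathcal{C}$, whence $\Trace D^2 v_g=0$ there; being positive semidefinite with zero trace, $D^2 v_g=0$ a.e.\ on $\mathcal{C}$, and in particular $\det D^2 v_g=0$ a.e.\ on $\mathcal{C}$.

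The ABP machinery then closes the argument: the subgradient map of the convex envelope $\Gamma$ sends $\mathcal{C}$ onto a ball of radius comparable to $M/\diam(\Omega)$, while the Lebesgue measure of this image is controlled, through the area formula for the convex function $\Gamma$ together with $0\leq D^2\Gamma\leq D^2 v_g$ on $\mathcal{C}$, by $\int_{\mathcal{C}}\det D^2 v_g=0$. Hence $M=0$, i.e.\ $v_g\geq 0$ a.e.\ in $\Omega$. The crucial feature is that no integrability of $D^2 v_g$ beyond $L^1$ is needed here, because the determinant term is identically zero; this is precisely where the estimate improves on the classical ABP maximum principle for $d\geq 3$, and is the point at which the approach extends the ideas of~\cite{SprekelerSuliZhang2025} from the periodic torus to the Dirichlet problem on a convex domain.

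The main obstacle is that this convex-envelope reasoning must be carried out when $v_g$ is only of class $H^2(\Omega)\cap H^1_0(\Omega)$, rather than $C(\overline{\Omega})\cap W^{2,d}_{\mathrm{loc}}(\Omega)$ as in the textbook ABP theorem --- note that $H^2(\Omega)$ need not embed into $C(\overline{\Omega})$ when $d\geq 4$ --- while $\overline{a}$ is merely bounded and measurable and $\partial\Omega$ is only Lipschitz. I would handle this by first proving the statement for regularized data. Since $\Cclass$ is closed and convex, mollification produces smooth coefficients $\overline{a}_\delta$ still taking values in $\Cclass$, and one can take $g_\delta\in C^\infty(\overline{\Omega})$ nonnegative with $g_\delta\to g$ in $L^2(\Omega)$; on the (Lipschitz, hence exterior-cone-regular) convex domain $\Omega$, the corresponding solution $v_\delta\in V$ is continuous up to $\partial\Omega$ and lies in $W^{2,p}_{\mathrm{loc}}(\Omega)$ for every $p<\infty$, so the classical ABP minimum principle applies directly and yields $v_\delta\geq 0$. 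To pass to the limit, observe that $v_g-v_\delta\in V$ solves $-\overline{a}_\delta:D^2(v_g-v_\delta)=(g-g_\delta)-(\overline{a}_\delta-\overline{a}):D^2 v_g$, so the well-posedness estimate for elliptic operators with Cordes coefficients (with a constant depending only on $d$, $\diam(\Omega)$, $\varepsilon$, $\underline{\nu}$, $\overline{\nu}$, hence independent of $\delta$) gives $\|v_g-v_\delta\|_{H^2(\Omega)}\leq C\big(\|g-g_\delta\|_{L^2(\Omega)}+\|(\overline{a}_\delta-\overline{a}):D^2 v_g\|_{L^2(\Omega)}\big)$, and the last term tends to $0$ by dominated convergence, using $\overline{a}_\delta\to\overline{a}$ a.e.\ and the uniform bound $\|\overline{a}_\delta\|_{L^\infty}\leq\|\overline{a}\|_{L^\infty}$. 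Thus $v_\delta\to v_g$ in $L^2(\Omega)$ and nonnegativity survives in the limit. Alternatively, one may run the envelope argument directly on $v_g$, using that a $W^{2,2}$ function is twice approximately differentiable a.e., so that $D^2 v_g\geq 0$ a.e.\ on $\mathcal{C}$ and the area-formula bound remain valid; either way, the real work lies in reconciling the $H^2$ regularity with the discontinuity of $\overline{a}$ and the nonsmoothness of $\partial\Omega$.
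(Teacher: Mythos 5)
Your overall strategy---regularize, apply the classical ABP minimum principle to the regularized problem, then pass to the limit---is the same as the paper's, and your first paragraph correctly identifies the key reason the result can hold at low regularity: when $g\geq 0$ the determinant term in the ABP estimate vanishes on the contact set, so $L^d$ integrability of $D^2 v$ is not really being used. Your proposed passage to the limit, via the $H^2$ stability estimate for Cordes operators applied to $v_g-v_\delta$, is also correct and in fact simpler than what the paper does.

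The gap is in the middle step, where you assert that for smooth $\overline{a}_\delta$ and $g_\delta$ on the original (merely Lipschitz, convex) domain $\Omega$, the unique $V$-solution $v_\delta\in H^2(\Omega)\cap H^1_0(\Omega)$ is continuous up to $\partial\Omega$ and lies in $W^{2,d}_{\mathrm{loc}}(\Omega)$, so that the classical ABP minimum principle applies. The interior $W^{2,p}_{\mathrm{loc}}$ regularity is standard, but for $d\geq 4$ the inclusion $H^2(\Omega)\hookrightarrow C(\overline{\Omega})$ fails and you cannot read continuity up to the boundary off the solution space. One could try to produce a Perron or viscosity solution in $C(\overline{\Omega})\cap W^{2,d}_{\mathrm{loc}}(\Omega)$ via barrier arguments based on the exterior cone condition, but you then face a nontrivial task of showing that this solution coincides with the $V$-solution whose nonnegativity you actually need to transfer. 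Neither step is immediate and you give no argument for either. This is precisely the point where the paper takes a different route: it also regularizes the domain, approximating $\Omega$ from inside by convex $C^{1,1}$ domains $\Omega_\delta$, on which \cite[Theorem~9.15]{GilbargTrudinger2001} gives a genuine $W^{2,d}(\Omega_\delta)\cap W^{1,d}_0(\Omega_\delta)$ solution (continuous up to $\partial\Omega_\delta$) and ABP applies directly. The price is a more delicate limit argument: since the solutions live on changing domains $\Omega_\delta$, one must extend them by zero, exploit the Cordes $H^2$ bound uniform in $\delta$, extract weak limits in $H^1_0(\Omega)$ and in $L^2$ for the zero-extended Hessians, and identify the limit with the strong solution on $\Omega$ by testing against compactly supported test functions. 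Your $H^2$-stability limit, by contrast, would close the argument in one line---but only after the domain-regularization issue has been resolved. A second, minor, gap: to mollify $\overline{a}$ while keeping $\overline{a}_\delta(x)\in\Cclass$ for all $x\in\Omega$ (not just on compact subsets), you need to say how $\overline{a}$ is extended beyond $\Omega$ before mollification; the paper extends by the constant matrix $\overline{\nu}I_d\in\Cclass$. Your final alternative---running the convex-envelope argument directly on $v_g$ using approximate differentiability---suffers from the same boundary-continuity problem for $d\geq 4$, and also from the fact that the standard area-formula/ABP machinery is set up for $W^{2,d}_{\mathrm{loc}}$ functions, so it too would need more than a remark.
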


Recall that the existence and uniqueness of the solution $v_g\in V$ in the statement of Theorem~\ref{thm:comparison_nondivergence} is well-known, cf.\ \cite{MaugeriPalagachevSoftova2000,SmearsSuli2013}.
The original aspect of Theorem~\ref{thm:comparison_nondivergence} is the conclusion of nonnegativity of the solution for nonnegative data.
Since the proof of Theorem~\ref{thm:comparison_nondivergence} is rather technical, we postpone it to Appendix~\ref{appendix: comparison principle}.

\begin{remark}
It is known that the conclusion of Theorem~\ref{thm:comparison_nondivergence} generally fails if one drops the hypothesis that the coefficient~$\overline{a}$ satisfies the Cordes condition~\eqref{eq:Cordes_condition}, whenever $d\geq 3$. 
Indeed, a family of examples is given in~\cite[p.~18]{MaugeriPalagachevSoftova2000} of linear uniformly elliptic equations on the unit ball $B_1$ in $\R^d$, $d\geq 3$ arbitrary, with discontinuous coefficients that do not satisfy the Cordes condition, such that, in each case, there exist two different strong solutions in $H^2(B_1)\cap H^1_0(B_1)$. By linearity, these give counterexamples for any comparison principle.
Moreover, it is shown in~\cite[p.~20]{MaugeriPalagachevSoftova2000} that the Cordes condition is essentially sharp for these examples, since the coefficients are discontinuous at a single point, and come arbitrarily close to satisfying the Cordes condition.
This explains the need for the additional assumption of the Cordes condition in Theorem~\ref{thm:comparison_nondivergence}. 
Note that this stands in contrast to the Aleksandrov--Bakelman--Pucci maximum principle, which only requires uniform ellipticity, but only concerns functions in $W^{2,d}_{\mathrm{loc}}(\Omega)$, i.e.\ functions with higher local integrability of the second-order derivatives when $d\geq 3$.
\end{remark}

The principal use of Theorem~\ref{thm:comparison_nondivergence} for our purposes is that it entails the following comparison principle for very weak solutions of KFP equations related to the MFG system.

\begin{corollary}\label{cor:kfp_comparison}
Let $\Omega\subset \R^d$ be a bounded convex open set.
Let $\overline{a} \in L^\infty(\Omega;\Rddsymp)$ satisfy $\overline{a}(x)\in \Cclass$ for almost every $x\in\Omega$. 
Let $G\in V^*$ and let $m\in L^2(\Omega)$ denote the unique solution of~\eqref{eqn: weak KFP}.
If $G$ is nonnegative in the sense of distributions, then $m$ is nonnegative a.e.\ in $\Omega$.
\end{corollary}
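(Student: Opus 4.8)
The plan is to prove Corollary~\ref{cor:kfp_comparison} by a short duality argument, using the comparison principle of Theorem~\ref{thm:comparison_nondivergence} as the essential input, so that the only real work is deferred to the appendix.

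First I would fix an arbitrary $g\in L^2_+(\Omega)$ and invoke the well-posedness of the nondivergence-form Dirichlet problem with Cordes coefficients (as recalled in the statement of Theorem~\ref{thm:comparison_nondivergence}, cf.\ also Lemma~\ref{lem: KFP well posed} and \cite{SmearsSuli2013}) to obtain the unique strong solution $v_g\in V$ of $-\overline{a}:D^2 v_g=g$ in $\Omega$, $v_g=0$ on $\partial\Omega$. Since $g\geq 0$ a.e.\ in $\Omega$ and $\overline{a}(x)\in\Cclass$ for a.e.\ $x\in\Omega$, Theorem~\ref{thm:comparison_nondivergence} gives that $v_g\geq 0$ a.e.\ in $\Omega$.

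Next I would test the weak formulation~\eqref{eqn: weak KFP} of the KFP equation with the particular choice $v=v_g\in V$, which yields the identity $(m,g)_\Omega = (m,-\overline{a}:D^2 v_g)_\Omega = \pair{G}{v_g}_{V^*\times V}$. Because $G$ is nonnegative in the sense of distributions and $v_g\in V$ satisfies $v_g\geq 0$ a.e.\ in $\Omega$, the right-hand side is nonnegative, so $(m,g)_\Omega\geq 0$. Since $g\in L^2_+(\Omega)$ was arbitrary, I would conclude in the standard way that $m\geq 0$ a.e.\ in $\Omega$: taking $g=m^-\coloneqq\max(-m,0)\in L^2_+(\Omega)$ gives $0\leq (m,m^-)_\Omega = -\norm{m^-}_{L^2(\Omega)}^2$, hence $m^-=0$ a.e.\ in $\Omega$.

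As for the main obstacle, there is essentially none at the level of the corollary itself: the argument is purely a consequence of the linear comparison principle combined with the definition of nonnegativity in the sense of distributions, and the one point to be careful about is simply that this definition is exactly the requirement that $\pair{G}{v}_{V^*\times V}\geq 0$ for all $v\in V$ with $v\geq 0$ a.e., which applies directly since $v_g\in V$. The genuinely difficult part—namely establishing Theorem~\ref{thm:comparison_nondivergence}, i.e.\ the comparison principle for operators with discontinuous Cordes coefficients on nonsmooth bounded convex domains—is what is postponed to Appendix~\ref{appendix: comparison principle}.
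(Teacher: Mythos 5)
Your proof is correct and takes essentially the same approach as the paper: both test the weak KFP formulation~\eqref{eqn: weak KFP} against a nonnegative dual solution $v_g\in V$ supplied by Theorem~\ref{thm:comparison_nondivergence} to show $(m,g)_\Omega\geq 0$ and conclude $m^-=0$. The only (cosmetic) difference is your choice $g=m^-$, giving $-\norm{m^-}_{L^2(\Omega)}^2$, versus the paper's choice $g=\chi_{\{m<0\}}$, giving $-\norm{m^-}_{L^1(\Omega)}$.
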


\begin{proof}
The existence and uniqueness of the solution $m$ is shown in Lemma~\ref{lem: KFP well posed}.
Let $\chi_{\{m<0\}}\in L^\infty(\Omega)$ denote the indicator function of the set $\{x\in \Omega: m(x)<0\}$, and let $v_*\in V$ denote the unique strong solution of $-\overline{a}:D^2v_* = \chi_{\{m<0\}}$ in $\Omega$ with $v_*=0$ on $\partial \Omega$.
Theorem~\ref{thm:comparison_nondivergence} implies that $v_*\geq 0$ a.e.\ in $\Omega$. 
Then, using the test function $v_*$ in~\eqref{eqn: weak KFP} leads to
\begin{equation*}
0\leq \pair{G}{v_*}_{V^* \times V} = \left( m, -\overline{a}:D^2 v_*\right)_\Omega = \int_{\{m<0\}} m \, \dx = - \norm{m^-}_{L^1(\Omega)},
\end{equation*}
where $m^-$ denotes the negative part of $m$, i.e.\ $m^- \coloneqq\max\{-m,0\}$.
We conclude that $m^{-}=0$ a.e.\ in $\Omega$ and thus $m$ is nonnegative a.e.\ in $\Omega$.
\end{proof}

We now prove Corollary~\ref{cor:PDI_VI_equivalence_complete}.

\begin{proof}[Proof of Corollary~\ref{cor:PDI_VI_equivalence_complete}]
By Theorem~\ref{thm:PDI_VI_equivalence}, it is enough to prove that if $(u,m)\in V\times L^2(\Omega)$ is a solution of the PDI problem~\eqref{eq:MFG_PDI}, then $m\geq 0$ a.e.\ in $\Omega$.
By definition, there exists an $-\overline{a}\in\mathcal{D}H[u]$ such that $(m,-\overline{a}:D^2v)_\Omega=\pair{G}{v}_{V^*\times V}$ for all $v\in V$.
By Lemma~\ref{lem:Cordes_subdifferential}, we have $\overline{a}(x)\in\Cclass$ for a.e.\ $x\in \Omega$. Therefore, using the hypothesis that $G\in V^*$ is nonnegative, we may apply Corollary~\ref{cor:kfp_comparison} to conclude that $m\geq 0$ a.e.\ in $\Omega$.
\end{proof}

\section{Existence and uniqueness of solutions of fully nonlinear MFG PDI with Cordes coefficients}
\label{section: well-posed}

\subsection{Proof of the existence of a solution}

In this section, we place ourselves in the setting of the hypotheses of Theorem~\ref{thm: MFG existence}.
We will show the existence of solutions of the VI problem, and equivalently of the PDI problem, by applying the Kakutani fixed point theorem to a suitably defined set-valued map, which we now construct in several steps.
Recall that Lemma~\ref{lemma: HJB existence} shows that, for each $m\in L^2(\Omega)$, there exists a unique strong solution $u\in V$ of the HJB equation $H[u]=F[m]$ in $\Omega$.
This defines the solution map $T\colon L^2(\Omega)\to V$, i.e.\ $T\colon m\mapsto T(m)$ for each $m\in L^2(\Omega)$ with
\begin{equation}\label{eq:Tmap_def}
\begin{aligned}
T(m) &\coloneqq u, &H[u] &= F[m] \quad\text{in }\Omega.
\end{aligned}
\end{equation}
We also know from Lemma~\ref{lemma: HJB existence} that 
\begin{equation}\label{eq:T_continuity}
\begin{aligned}
\norm{T(m_1)-T(m_2)}_{H^2(\Omega)}\leq C \norm{F[m_1]-F[m_2]}_{L^2(\Omega)} &&& \forall m_1,\,m_2 \in L^2(\Omega),
\end{aligned}
\end{equation}
for a constant $C$ independent of $m_1$ and $m_2$.

We now define a set-valued map $S\colon V\rightrightarrows L^2(\Omega)$ as follows.
Recalling the bound~\eqref{eqn: KFP L2 control}, let $\mathcal{B}\coloneqq \{m\in L^2(\Omega): \norm{m}_{L^2(\Omega)}\leq C\norm{G}_{V^*}\}$ denote the closed ball of radius $R=C\norm{G}_{V^*}$ centred at the origin in $L^2(\Omega)$, where the constant $C$ is the same as the one in~\eqref{eqn: KFP L2 control}.
For each $v\in V$, let $S(v)$ denote the set of all $\widetilde{m}\in L^2(\Omega)$ that satisfy
\begin{equation}\label{eq:Sv_set_def}
\widetilde{m}\in \mathcal{B}, \quad \widetilde{m}\geq 0 \text{ a.e.\ in }\Omega, \quad 
\pair{G}{w-v}_{V^* \times V} \leq \left( \widetilde{m},  H[w]-H[v]\right)_\Omega \quad \forall w\in V.
\end{equation}
We will show that $S(v)$ is nonempty for all $v\in V$ as part of Lemma~\ref{lem:Smap_properties} below.
Note that, by definition, $S(v)\subset L^2_+(\Omega)\cap \mathcal{B}$ for all $v\in V$, hence the sets $S(v)$, $v\in V$, are uniformly bounded in $L^2(\Omega)$ by definition.
The following lemma shows several essential properties of the set-valued map $S$.

\begin{lemma}\label{lem:Smap_properties}
Let $\Omega$ be an open, bounded, convex subset of $\R^d$, $d\geq 1$. 
Let $\mathcal{A}$ be a compact metric space, let $a\in C( \overline{\Omega}\times \mathcal{A};\Rddsymp)$ satisfy Assumption~\ref{assumption: Cordes}, and let $f\in C(\overline{\Omega}\times \mathcal{A})$.
Let $G\in V^*$ be nonnegative in the sense of distributions.
Then, for each $v\in V$, the set $S(v)$ is a nonempty, bounded, closed, convex subset of $L^2(\Omega)$.
\end{lemma}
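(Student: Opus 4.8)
The plan is to verify each of the four claimed properties of $S(v)$ in turn, for a fixed but arbitrary $v\in V$. The boundedness is immediate, since $S(v)\subset \mathcal B$ by definition. For convexity, I would observe that the defining conditions in~\eqref{eq:Sv_set_def} are all stable under convex combinations: the set $\mathcal B$ is convex, the cone $L^2_+(\Omega)$ is convex, and the variational inequality $\pair{G}{w-v}_{V^*\times V}\leq (\widetilde m, H[w]-H[v])_\Omega$ is linear in $\widetilde m$, so a convex combination of two solutions is again a solution. Closedness is slightly more delicate because we must pass to a limit in the inequality: given a sequence $\widetilde m_j\in S(v)$ converging strongly in $L^2(\Omega)$ to some $\widetilde m$, the membership $\widetilde m\in\mathcal B$ and $\widetilde m\geq 0$ pass to the limit since $\mathcal B$ and $L^2_+(\Omega)$ are (strongly, hence weakly) closed, and for each fixed $w\in V$ the right-hand side $(\widetilde m_j, H[w]-H[v])_\Omega$ converges to $(\widetilde m, H[w]-H[v])_\Omega$ because $H[w]-H[v]\in L^2(\Omega)$ is a fixed test function; thus the inequality survives and $\widetilde m\in S(v)$.

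The substantive part, and the main obstacle, is showing that $S(v)$ is nonempty. The natural approach here is to realize a candidate element of $S(v)$ as the solution of a linearized KFP equation: pick any measurable selection $-\overline a\in\mathcal DH[v]$ (which is nonempty, as recalled in Section~\ref{sec:notation}), so that by Lemma~\ref{lem:Cordes_subdifferential} we have $\overline a(x)\in\Cclass$ for a.e.\ $x\in\Omega$. Then Lemma~\ref{lem: KFP well posed} furnishes a unique $\widetilde m\in L^2(\Omega)$ solving $(\widetilde m, -\overline a:D^2 w)_\Omega=\pair{G}{w}_{V^*\times V}$ for all $w\in V$, together with the bound $\norm{\widetilde m}_{L^2(\Omega)}\leq C\norm{G}_{V^*}$, where $C$ is exactly the constant used to define $\mathcal B$; hence $\widetilde m\in\mathcal B$. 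Since $G$ is nonnegative in the sense of distributions and $\overline a$ is Cordes, Corollary~\ref{cor:kfp_comparison} gives $\widetilde m\geq 0$ a.e.\ in $\Omega$. It remains only to check the variational inequality: for any $w\in V$, the subdifferential inequality $-\overline a(x):D^2(w-v)(x)\leq H[w](x)-H[v](x)$ holds a.e.\ in $\Omega$ (because $-\overline a(x)\in\partial_M H(x,D^2 v(x))$), and multiplying by $\widetilde m\geq 0$ and integrating, then using the KFP identity with test function $w-v$, yields $\pair{G}{w-v}_{V^*\times V}=(\widetilde m,-\overline a:D^2(w-v))_\Omega\leq (\widetilde m, H[w]-H[v])_\Omega$. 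Thus $\widetilde m\in S(v)$, so $S(v)\neq\emptyset$.

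This argument is essentially the same as the one in the proof of Lemma~\ref{lem:PDI_solution_are_VI_solution}, specialized to the linearization around $v$, and it is where the Cordes hypothesis (via Lemma~\ref{lem:Cordes_subdifferential}, Lemma~\ref{lem: KFP well posed}, and Corollary~\ref{cor:kfp_comparison}) and the nonnegativity of $G$ are genuinely used; the remaining three properties are soft and follow from linearity, convexity, and elementary closedness arguments. One small point to be careful about: the constant $C$ in the definition of $\mathcal B$ must be taken to be the constant from~\eqref{eqn: KFP L2 control} in Lemma~\ref{lem: KFP well posed}, which is consistent with how $\mathcal B$ was introduced above, so the bound on $\widetilde m$ indeed places it in $\mathcal B$ with no room to spare but also with no deficit.
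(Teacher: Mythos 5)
Your proof is correct and follows essentially the same route as the paper: nonemptiness via a measurable selection $-\overline{a}\in\mathcal{D}H[v]$, Lemma~\ref{lem:Cordes_subdifferential}, Lemma~\ref{lem: KFP well posed}, and Corollary~\ref{cor:kfp_comparison}, followed by the subdifferential inequality; boundedness by inclusion in $\mathcal{B}$; and convexity and closedness from the affine-in-$\widetilde m$ structure of the constraints. The only difference is that you spell out the closedness argument, which the paper simply labels as clear.
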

\begin{proof}
Let $v\in V$ be fixed but arbitrary.
The set $S(v)$ is bounded since $S(v)\subset \mathcal{B}$ by definition.
To show that $S(v)$ is nonempty for any $v\in V$, we use the fact that $\mathcal{D}H[v]$ is nonempty and choose an $-\overline{a}\in \mathcal{D}H[v]$.
Lemma~\ref{lem:Cordes_subdifferential} shows that $\overline{a}(x)\in \Cclass$ for a.e.\ $x\in \Omega$. 
Therefore, we may apply Lemma~\ref{lem: KFP well posed} to deduce that there exists a unique $\widetilde{m}\in L^2(\Omega)$ that solves $\left( \widetilde{m}, -\overline{a}:D^2 w \right)_\Omega =\pair{G}{w}_{V^*\times V}$ for all $w\in V$.
Furthermore, the bound~\eqref{eqn: KFP L2 control} implies that $\norm{\widetilde{m}}_{L^2(\Omega)}\leq C\norm{G}_{V^*}$ so $\widetilde{m}\in \mathcal{B}$.
It is additionally clear from Corollary~\ref{cor:kfp_comparison} that $\widetilde{m}\geq 0$ a.e.\ in $\Omega$, since $G$ is nonnegative in the sense of distributions.
Moreover we find that $\pair{G}{w-v}_{V^*\times V}\leq(\widetilde{m},H[w]-H[v])_\Omega$ by following the same argument as in the proof of Lemma~\ref{lem:PDI_solution_are_VI_solution}.
This shows that $\widetilde{m}\in S(v)$ and thus $S(v)$ is nonempty.
The set $S(v)$ is also convex, since, for any $m_0$ and $m_1$ in $S(v)$, and any $\lambda\in[0,1]$, then $(1-\lambda)\widetilde{m}_0+\lambda \widetilde{m}_1\in \mathcal{B}\cap L^2_+(\Omega)$, and also, for any test function~$w\in V$,
\begin{multline}
((1-\lambda) \widetilde{m}_0+\lambda \widetilde{m}_1,H[w]-H[v])_\Omega 
 = (1-\lambda)(\widetilde{m}_0,H[w]-H[v])_\Omega + \lambda (\widetilde{m}_1,H[w]-H[v])_\Omega
\\  \geq (1-\lambda)\pair{G}{w-v}_{V^* \times V}+\lambda \pair{G}{w-v}_{V^* \times V} 
= \pair{G}{w-v}_{V^* \times V}.
\end{multline}
Thus $(1-\lambda) \widetilde{m}_0+\lambda \widetilde{m}_1\in S(v)$ for all $\widetilde{m}_0$ and $\widetilde{m}_1$ in $S(v)$ and all $\lambda\in[0,1]$, so $S(v)$ is convex.
It is furthermore clear that $S(v)$ is closed in $L^2(\Omega)$.
\end{proof}

We now define the set valued map $\Phi\colon  L^2(\Omega)\rightrightarrows L^2(\Omega)$ as
\begin{equation}\label{eq:Phi_fp_map}
\Phi(m) \coloneqq S(T(m)) \quad \forall m\in L^2(\Omega),
\end{equation}
where it is recalled that $T$, respectively $S$, is defined in~\eqref{eq:Tmap_def}, respectively~\eqref{eq:Sv_set_def}, above.
It is clear that a pair $(u,m)\in V\times L^2_+(\Omega)$ is a solution of the VI problem~\eqref{eq:MFG_VI} if and only if $m$ is a fixed point of $\Phi$, i.e.\ $m\in \Phi(m)$, and $u=T(m)$ is the solution of the corresponding HJB equation.
We will thus prove Theorem~\ref{thm: MFG existence} on the existence of solutions of the PDI and VI problems by applying Kakutani's fixed point theorem to the set-valued map $\Phi$ defined above.
For completeness, we recall Kakutani's theorem below, and refer the reader to~\cite[Theorem~9.B, p.~452]{Zeidler1986} for its proof.

\begin{theorem}[Kakutani's fixed point theorem]\label{thm-kakutani}
Let $\mathcal{B}$ is a nonempty, compact, convex set in a locally convex space $\mathcal{X}$.
Let $\Phi\colon\mathcal{B}\rightrightarrows\mathcal{B}$ be a set-valued map such that $\Phi(m)$ is nonempty, closed and convex for all $m\in\mathcal{B}$. Suppose that $\Phi$ is upper semicontinuous.
Then $\Phi$ has a fixed point in~$\mathcal{B}$, i.e.\ there exists an $m\in\mathcal{B}$ such that $m\in \Phi(m)$. 
\end{theorem}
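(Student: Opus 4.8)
The plan is to prove Theorem~\ref{thm-kakutani} by approximation, reducing everything ultimately to Brouwer's fixed point theorem. I would proceed in two stages: first the finite-dimensional case, where $\mathcal{X}=\R^n$ and $\mathcal{B}\subset\R^n$ is compact and convex (this is Kakutani's original theorem), and then the general locally convex case (the Fan--Glicksberg extension) by means of a finite-dimensional Schauder-type reduction together with a compactness argument over a directed family of approximations. A basic tool used throughout is the standard fact that a set-valued map with closed values from a topological space into a \emph{compact} Hausdorff space is upper semicontinuous if and only if its graph is closed; under the hypotheses of the theorem this shows, in particular, that the graph of $\Phi$ is a closed subset of $\mathcal{B}\times\mathcal{B}$. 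If $\mathcal{X}$ is not assumed Hausdorff one first passes to the associated Hausdorff locally convex quotient, which changes nothing essential.

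For the finite-dimensional case, I would approximate the set-valued map $\Phi$ by genuine continuous single-valued maps. Fix $\varepsilon>0$, cover the compact set $\mathcal{B}$ by finitely many balls $B(x_i,\varepsilon)$ with centres $x_i\in\mathcal{B}$, $i=1,\dots,N$, take a continuous partition of unity $\{\psi_i\}_{i=1}^N$ subordinate to this cover, and choose $y_i\in\Phi(x_i)$. Then $f_\varepsilon(x)\coloneqq\sum_{i=1}^N\psi_i(x)\,y_i$ is continuous and, by convexity of $\mathcal{B}$, maps $\mathcal{B}$ into itself, so Brouwer's theorem yields an $x_\varepsilon\in\mathcal{B}$ with $f_\varepsilon(x_\varepsilon)=x_\varepsilon$. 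Taking $\varepsilon=\varepsilon_k\to 0$ and using compactness of $\mathcal{B}$, a subsequence of $\{x_{\varepsilon_k}\}$ converges to some $x^*\in\mathcal{B}$. To see that $x^*\in\Phi(x^*)$, note that only indices $i$ with $\psi_i(x_{\varepsilon_k})>0$ contribute to the defining sum, and for those indices $\abs{x_i-x_{\varepsilon_k}}<\varepsilon_k$, so the contributing centres all converge to $x^*$. Hence, given any convex open neighbourhood $U$ of $\Phi(x^*)$, upper semicontinuity forces $y_i\in\Phi(x_i)\subset U$ for all contributing $i$ once $k$ is large, and then the convex combination $x_{\varepsilon_k}=\sum_i\psi_i(x_{\varepsilon_k})\,y_i$ lies in $U$ as well. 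Since the $\delta$-neighbourhoods of the compact convex set $\Phi(x^*)$ are themselves convex and shrink to $\Phi(x^*)$ as $\delta\to 0$, and $\Phi(x^*)$ is closed, we conclude $x^*\in\Phi(x^*)$.

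For the general locally convex case, I would index the approximations by the directed set of convex symmetric open neighbourhoods $U$ of the origin in $\mathcal{X}$, ordered by reverse inclusion. Given such a $U$, compactness of $\mathcal{B}$ gives a finite cover by translates $x_i+U$, $x_i\in\mathcal{B}$, $i=1,\dots,N_U$; with a subordinate partition of unity $\{\psi_i\}$ one forms the Schauder projection $r_U(x)\coloneqq\sum_i\psi_i(x)\,x_i$, which is continuous, maps $\mathcal{B}$ into the finite-dimensional compact convex set $E_U\coloneqq\Conv\{x_1,\dots,x_{N_U}\}\subset\mathcal{B}$, and satisfies $r_U(x)-x\in U$ for every $x\in\mathcal{B}$. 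Define $\Phi_U\colon E_U\rightrightarrows E_U$ by $\Phi_U(y)\coloneqq\Conv\big(r_U(\Phi(y))\big)$; using continuity of $r_U$, compactness of the values of $\Phi$, and the closed-graph characterisation, one checks that $\Phi_U$ has nonempty compact convex values and closed graph, hence is upper semicontinuous, so the finite-dimensional case provides a $y_U\in E_U$ with $y_U\in\Phi_U(y_U)$. By Carath\'eodory one may write $y_U=\sum_j\lambda_j\,r_U(z_j)$ as a finite convex combination with $z_j\in\Phi(y_U)$; setting $w_U\coloneqq\sum_j\lambda_j z_j$, convexity of $\Phi(y_U)$ gives $w_U\in\Phi(y_U)$, while $y_U-w_U=\sum_j\lambda_j\big(r_U(z_j)-z_j\big)\in U$. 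Thus $(y_U,w_U)$ lies in the (closed) graph of $\Phi$ and $y_U-w_U\to 0$ along the net. Passing to a subnet with $y_U\to x^*\in\mathcal{B}$, which is possible by compactness, we get $w_U\to x^*$ as well, and closedness of the graph of $\Phi$ gives $x^*\in\Phi(x^*)$.

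The routine parts --- existence of the partitions of unity, the fact that the approximating maps land in the intended compact convex sets, and the elementary limit manipulations --- are not where the difficulty lies. The main obstacle is the infinite-dimensional reduction: constructing the finite-dimensional Schauder projections compatibly with the topology of $\mathcal{X}$, verifying carefully that the auxiliary maps $\Phi_U$ inherit the hypotheses of the finite-dimensional Kakutani theorem, and organising the limit as a net indexed by the neighbourhood filter of $0$ rather than as a sequence, exploiting the closed-graph property of $\Phi$ to pass to the limit. In the application to the MFG system this last point is mild: there $\mathcal{X}=L^2(\Omega)$ carries its weak topology and $\mathcal{B}$ is a closed ball, which is weakly compact, convex and metrizable, so nets may be replaced by sequences and the argument simplifies accordingly.
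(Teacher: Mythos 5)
The paper does not prove Theorem~\ref{thm-kakutani} at all: it recalls the statement for completeness and refers to Zeidler~\cite[Theorem~9.B]{Zeidler1986} for the proof. Your proposal therefore does genuinely more work, supplying a full proof along the classical two-stage route (finite-dimensional Kakutani via Brouwer and partitions of unity, then the Fan--Glicksberg lift via Schauder projections and a net argument), and the argument is correct. Two minor points are compressed. In the finite-dimensional step, after showing that the Brouwer fixed points $x_{\varepsilon_k}$ eventually lie in a given convex open neighbourhood $U$ of $\Phi(x^*)$, the right conclusion is $x^*\in\overline{U}$, not $x^*\in U$; letting $U$ run over the open $\delta$-neighbourhoods of $\Phi(x^*)$ and intersecting the closed $\delta$-neighbourhoods over $\delta>0$ then yields $x^*\in\Phi(x^*)$ using closedness of $\Phi(x^*)$. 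Also, all the auxiliary data ($N$, $x_i$, $\psi_i$, $y_i$) depend on $k$, which your notation suppresses, although the argument tolerates this. In the infinite-dimensional step the key verifications --- that $r_U(x)-x\in U$ by convexity and symmetry of $U$, that $\Phi_U$ inherits the finite-dimensional hypotheses (with closed graph checked via Carath\'eodory and the closed graph of $\Phi$), and that $y_U-w_U\to 0$ along the net --- are all accurately sketched. Your closing remark is also apt: in the paper's application $\mathcal{X}$ is $L^2(\Omega)$ with the weak topology and $\mathcal{B}$ is a weakly-compact, weakly-metrizable ball, so nets may indeed be replaced by sequences and the general net machinery is not strictly needed there.
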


We are now ready to prove Theorem~\ref{thm: MFG existence} on the existence of solutions of the PDI and VI problems.
 
\begin{proof}[Proof of Theorem~\ref{thm: MFG existence}]
We show that the set-valued map $\Phi $ defined in~\eqref{eq:Phi_fp_map} above satisfies all the hypotheses of Theorem~\ref{thm-kakutani}.
Consider the space $L^2(\Omega)$ equipped with the weak topology, which is a locally convex topological vector space.
We take here $\mathcal{B}=\{m\in L^2(\Omega)\colon \norm{m}_{L^2(\Omega)}\leq C\norm{G}_{V^*}\}$, where the constant $C$ is from~\eqref{eqn: KFP L2 control}.
Hence $\mathcal{B}$ is nonempty, convex and compact in the weak topology of $L^2(\Omega)$.
Note that the separability of $L^2(\Omega)$ implies that the weak topology on $\mathcal{B}$ is metrizable.
Recall that the set-valued map $S$ maps elements of $V$ to subsets of $\mathcal{B}$ by definition, cf.~\eqref{eq:Sv_set_def} above.
Hence $\Phi\colon \mathcal{B}\rightrightarrows \mathcal{B}$.
Furthermore, by Lemma~\ref{lem:Smap_properties} above, for any $m\in \mathcal{B}$, the set $\Phi(m)=S(T(m))\subset \mathcal{B}$ is nonempty, (strongly) closed, and convex.
Thus, by Mazur's theorem, the set~$\Phi(m)$ is also closed in the weak topology of $L^2(\Omega)$, for any $m\in L^2(\Omega)$.

It remains only to show that $\Phi$ is upper-semicontinuous. 
Since $\Phi \colon \mathcal{B}\rightrightarrows \mathcal{B} $ with nonempty, closed, convex images, with $\mathcal{B}$ compact in the weak topology, the upper-semicontinuity of $\Phi$ is equivalent to proving that $\Phi$ has closed graph, see~\cite[Corollary~1, p.~42]{AubinCellina1984}.
In other words, for any given sequence $\{(m_j,\widetilde{m}_j)\}_{j\in\N}\subset \mathcal{B}\times\mathcal{B}$, such that $\widetilde{m}_j\in \Phi(m_j)$ for all $j\in\N$, and such that $m_j\rightharpoonup m$ and $\widetilde{m}_j\rightharpoonup \widetilde{m}$ in $L^2(\Omega)$ as $j\to \infty$, it is enough to show that $\widetilde{m}\in\Phi(m)$.
Suppose that we are given such a sequence $\{(m_j,\widetilde{m}_j)\}_{j\in\N}$.
Note that $\widetilde{m}_j\in \Phi(m_j)$ implies that $\widetilde{m}_j\geq 0$ a.e.\ in $\Omega$ for all $j\in \N$, and thus Mazur's theorem implies that $\widetilde{m}\geq 0$ a.e.\ in $\Omega$.
It is also clear that $\widetilde{m}\in\mathcal{B}$.
Let us denote $u_j\coloneqq T(m_j)$ for each $j\in \N$ and $u\coloneqq T(m)$.
The complete continuity of $F$ implies that $F[m_j]\to F[m]$ in $L^2(\Omega)$, and hence also that $H[u_j] \rightarrow H[u]$, in $L^2(\Omega)$ as $j\to \infty$.
It also follows from~\eqref{eq:T_continuity} that $u_j\to u$ strongly in~$V$ as $j\to \infty$. 
Since, by hypothesis, we have $\widetilde{m}_j\in \Phi(m_j)$ for each $j\in\N$, we get
\begin{equation}\label{eq: MFG existence_1}
\pair{G}{v-u_j}_{V^* \times V}\leq \left(\widetilde{m}_j,H[v]-H[u_j]\right)_{\Omega} \quad \forall j\in \N, \quad \forall v\in V.
\end{equation}
It then follows from the weak convergence $\widetilde{m}_j\rightharpoonup \widetilde{m}$ in $L^2(\Omega)$, the strong convergence $H[u_j]\to H[u]$ in $L^2(\Omega)$, and the strong convergence $u_j\to u$ in $V$ as $j\to \infty$, that we can pass to the limit in~\eqref{eq: MFG existence_1} to find that
\begin{equation}\label{thm: MFG existence_2}
\pair{G}{v-u}_{V^* \times V}\leq \left(\widetilde{m},H[v]-H[u]\right)_\Omega \quad\forall v\in V.
\end{equation}
Recalling that $\widetilde{m}\geq 0$ a.e.\ in $\Omega$ was already shown above, we conclude from~\eqref{thm: MFG existence_2} that $\widetilde{m}\in S(u)=\Phi(m)$, and thus $\Phi$ is upper semicontinuous. 
Therefore, Theorem~\ref{thm-kakutani} shows that there exists an $m\in \mathcal{B}$ such that $m\in \Phi(m)$. 
It is then clear that $(u,m)\coloneqq(T(m),m)$ is a solution of the VI problem~\eqref{eq:MFG_VI}.
By Theorem~\ref{thm:PDI_VI_equivalence}, the VI solution $(u,m)$ is equivalently a solution of the PDI problem~\eqref{eq:MFG_PDI}.

We now turn towards the proof of the \emph{a priori} bounds~\eqref{eq:solution_boundedness_u} and~\eqref{eq:solution_boundedness_m} for generic solutions. 
Let $(u,m)\in V\times L^2_+(\Omega)$ be any solution of the VI problem~\eqref{eq:MFG_VI}, and, equivalently, of the PDI problem~\eqref{eq:MFG_PDI}.
By definition, there exists an $\overline{a}\in L^\infty(\Omega;\Rddsymp)$ such that $-\overline{a}\in\mathcal{D}H[u]$ and $(m,-\overline{a}:D^2 v)_\Omega=\pair{G}{v}_{V^*\times V}$ for all $v\in V$.
Since $\overline{a}(x)\in\Cclass$ for a.e.\ $x\in\Omega$ by Lemma~\ref{lem:Cordes_subdifferential}, we may apply Lemma~\ref{lem: KFP well posed} to deduce that $\norm{m}_{L^2(\Omega)}\leq C\norm{G}_{V^*}$, which is~\eqref{eq:solution_boundedness_m}.
Then, the bound~\eqref{eq:HJB_boundedness} of Lemma~\ref{lemma: HJB existence} shows that $\norm{u}_{H^2(\Omega)}\leq C\Bigl(1+\norm{F[m]}_{L^2(\Omega)}\Bigr)$.
Since the operator $F$ is completely continuous, and thus maps bounded sets to bounded sets, we have $\norm{F[m]}_{L^2(\Omega)}\leq \sup_{\widetilde{m}\in \mathcal{B}}\norm{F[\widetilde{m}]}_{L^2(\Omega)}<\infty$.
This implies~\eqref{eq:solution_boundedness_u}.
\end{proof}

\begin{remark}
Although the analysis above for showing the existence of solutions makes greater use of the VI formulation of the problem, the attentive reader may notice that the PDI point of view plays a key role in showing the nonemptiness of the sets~$S(v)$ in the proof of Lemma~\ref{lem:Smap_properties}. 
This is, of course, an essential step in the analysis towards applying the Kakutani fixed point theorem.
It is thus seen that both points of view are complementary.
\end{remark}

\subsection{Proof of the uniqueness of the solution}\label{sec:uniqueness}

Recall that for the uniqueness of the solution, the coupling term $F$ is assumed to satisfy the strict monotonicity condition of Assumption~\ref{assumption: strict monotonicity}.
The proof of Theorem~\ref{thm:MFG_uniqueness} is a straightforward consequence of the results shown above.

\begin{proof}[Proof of Theorem~\ref{thm:MFG_uniqueness}]
Recall that, by Corollary~\ref{cor:PDI_VI_equivalence_complete} any PDI solution is a VI solution and thus it is enough to consider only the uniqueness of the solution of the VI problem.
Recall that Lemma~\ref{lem:VI_density_uniqueness} shows that, if $(u_i,m_i)\in V\times L^2_+(\Omega)$ are both solutions of the VI problem~\eqref{eq:MFG_VI} for $i\in\{1,2\}$, then $m_1=m_2$. 
Then, the uniqueness of the solution of the corresponding HJB equation, cf.\ Lemma~\ref{lemma: HJB existence}, implies that $u_1=u_2$.
\end{proof}

\section{Continuous dependence under regularization and perturbation of the data}
\label{section: regularization}

We now demonstrate the usefulness and flexibility of the equivalence of PDI and VI formulations of the problem in analysing further properties of the MFG system.
In particular, the VI formulation allows for a very effective and simple treatment of limiting arguments in the absence of compactness on the density.
More specifically, we consider continuous dependence results with regards to perturbation of the problem data.

\subsection{Regularization of the Hamiltonian}
We show below how the the PDI/VI problems for MFG with nondifferentiable Hamiltonians arise naturally as the limit of MFG PDE with differentiable Hamiltonians.
Consider a family of regularized problems of the form: find $(u_\lambda,m_\lambda)\in V\times L^2_+(\Omega)$ that solve
\begin{subequations}
\begin{alignat}{3}
H_\lambda[u_\lambda] &= F[m_\lambda] & \quad & \text{a.e. in } \Omega, \label{eqn: regularized MFG1}
\\ \left(m_\lambda , \frac{\partial H_\lambda}{\partial M}(x,D^2 u_\lambda) \colon D^2 v  \right)_{\Omega} &= \angled{G, v}_{V^* \times V} & \quad & \forall v \in V. \label{eqn: regularized MFG2}
\end{alignat}
\label{eqn: regularized MFG}
\end{subequations}
where $\lambda\in (0,1]$ is a regularization parameter, and where $H_\lambda \colon \overline{\Omega} \times \Rddsym \rightarrow \R$ is a regularized approximation of $H$ with $H_\lambda \to H$ as $\lambda \to 0$, see~\eqref{eq:H_reg_conv} below.
More precisely, we will make the following assumptions on the family of regularized Hamiltonians $\{H_\lambda\}_{\lambda \in (0,1]}$.
\begin{assumption} \label{assumption: regularised hamiltonian}
The family of functions $\{H_\lambda\}_{\lambda \in (0,1]}$ is such that, for each $\lambda \in (0,1]$, the following hold:
\begin{itemize}
    \item the function $H_\lambda$ is continuous on~$\overline{\Omega} \times \Rddsym$, and such that
    \begin{equation}\label{eq:H_reg_Lipschitz} 
    |H_\lambda(x,M) - H_\lambda(x,N)| \leq C|M-N| \quad \forall (x, M, N) \in \overline{\Omega} \times \Rddsym \times \Rddsymp,
    \end{equation}
    for some constant $C$ independent of $\lambda$.
    \item for all~$x \in \overline{\Omega}$, the map $M \mapsto H_\lambda(x,M)$ is convex and continuously differentiable with partial derivative~$\frac{\partial H_\lambda}{\partial M} \colon \overline{\Omega} \times \Rddsym \rightarrow \Rddsym$;
    \item there exists a continuous function~$\omega \colon [0,1] \rightarrow \R_{\geq 0}$ such that~$\omega(0) = 0$ and
    \begin{equation}\label{eq:H_reg_conv}
    |H_\lambda(x,M) - H(x,M)| \leq \omega(\lambda) \quad \forall (x,M) \in \overline{\Omega} \times \Rddsym;
    \end{equation}
    \item and the partial derivative $\frac{\partial H_\lambda}{\partial M}$ satisfies
    \begin{equation}\label{eq:regularized_cordes}
    \begin{aligned}
    -\frac{\partial H_\lambda}{\partial M}(x,M) \in \mathcal{C}(\underline{\nu}, \overline{\nu}, \varepsilon) \quad \forall (x,M)\in \overline{\Omega}\times \mathcal{A}.
    \end{aligned}
    \end{equation}
\end{itemize}
\end{assumption}

Note that~\eqref{eq:H_reg_conv} signifies uniform convergence of the Hamiltonians.
Moreover, the condition $\lambda \in (0,1]$ is not essential, as it could be replaced more generally by any bounded subset of $\R$ with $0$ being a limit point.

\begin{example}\label{example: moreau yosida}
A suitable example of a family of regular Hamiltonians satisfying Assumption~\ref{assumption: regularised hamiltonian} is provided by Moreau--Yosida regularization, also known as Moreau envelope, where, for each $\lambda\in (0,1]$, the function $H_\lambda$ is defined by
\begin{equation}
H_\lambda(x,M) \coloneqq \min_{N \in \Rddsym} \left\{ H(x,N) + \frac{1}{2\lambda}|N-M|^2 \right\} \quad \forall (x,M)\in \overline{\Omega}\times\Rddsym. \label{eqn: moreau yosida defn}
\end{equation}
In this case, Assumption~\ref{assumption: regularised hamiltonian} holds, cf.\ \cite[Theorem~6.5.7]{AubinFrankowska2009}, and it is known that one may take~$\omega(\lambda) = \frac{L_H^2 \lambda}{2}$,  which is proved similarly to~\cite[Lemma 3.4]{OsborneSmears2025iii}, where it is recalled that $L_H$ is defined in~\eqref{eq:L_H_def}.
For the final point in Assumption~\ref{assumption: regularised hamiltonian}, it is well-known that 
\begin{equation}
    \frac{\partial H_\lambda}{\partial M}(x,M) \in \partial_M H(x, M_\lambda), \label{eqn: moreau yosida subdifferential}
\end{equation}
where~$M_\lambda \in \Rddsym$ is the unique minimizer of~\eqref{eqn: moreau yosida defn}. Furthermore, $M_\lambda \rightarrow M$ as $\lambda \rightarrow 0$.
Therefore~\eqref{eq:regularized_cordes} follows from \eqref{eqn: moreau yosida subdifferential} and Proposition~\ref{prop:subdifferential_characterization}.
\end{example}

From Theorem~\ref{thm: MFG existence}, for each~$\lambda \in (0,1]$, there exists a solution pair $(u_\lambda, m_\lambda) \in V \times L_+^2(\Omega)$ of~\eqref{eqn: regularized MFG}, where differentiability of $H_\lambda$ w.r.t.\ $M$ implies that the KFP equation~\eqref{eqn: regularized MFG2} is a differential equation rather than merely a differential inclusion.
Our main result regarding regularization is the following theorem.

\begin{theorem}\label{thm: regularization convergence}
Let~$\Omega \subset \R^d$ be an open, bounded, convex domain.
Let $\mathcal{A}$ be a compact metric space, let ~$a \in C(\overline{\Omega} \times \mathcal{A};\Rddsymp)$ satisfy Assumption~\ref{assumption: Cordes}, and let $f \in C(\overline{\Omega}\times\mathcal{A})$.
Let $F \colon L^2(\Omega) \rightarrow L^2(\Omega)$ be completely continuous.
Let $G \in V^*$ be nonnegative in the sense of distributions.
Let~$\{H_\lambda\}_{\lambda \in (0,1]}$ be a family of regularizations of \eqref{eqn:H_def} satisfying Assumption~\ref{assumption: regularised hamiltonian}.
Then, there exists a subsequence $\{(u_{\lambda_j}, m_{\lambda_j})\}_{j \in \mbb{N}}$ of solutions of~\eqref{eqn: regularized MFG} for $\lambda=\lambda_j$, and a solution $(u,m)\in V\times L^2_+(\Omega)$ of~\eqref{eq:MFG_PDI}, equivalently of~\eqref{eq:MFG_VI}, such that $u_{\lambda_j} \rightarrow u$ strongly in~$V$, and $m_{\lambda_j} \rightharpoonup m$ weakly in~$L^2(\Omega)$, as $j\to \infty$.
\end{theorem}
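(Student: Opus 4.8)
The plan is to first extract uniform bounds on the regularized solutions, then pass to the limit using compactness for the value functions and the VI formulation for the densities.

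\textit{Step 1: Uniform a priori bounds.} Since each $-\frac{\partial H_\lambda}{\partial M}(x,M)\in\Cclass$ by~\eqref{eq:regularized_cordes}, the KFP equation~\eqref{eqn: regularized MFG2} is of the form~\eqref{eqn: weak KFP} with coefficient $\overline{a}_\lambda(x)\coloneqq -\frac{\partial H_\lambda}{\partial M}(x,D^2u_\lambda(x))\in\Cclass$ a.e.; hence Lemma~\ref{lem: KFP well posed} gives $\norm{m_\lambda}_{L^2(\Omega)}\leq C\norm{G}_{V^*}$ with $C$ independent of $\lambda$. Consequently $\norm{F[m_\lambda]}_{L^2(\Omega)}$ is bounded uniformly in $\lambda$ because $F$ is completely continuous, hence bounded. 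The HJB equation~\eqref{eqn: regularized MFG1} together with the uniform Lipschitz bound~\eqref{eq:H_reg_Lipschitz} and the Cordes structure allows a uniform $H^2$ bound: indeed, writing $H_\lambda[u_\lambda]=F[m_\lambda]$, we may repeat the renormalization argument from the proof of Lemma~\ref{lemma: HJB existence} for the operator $H_\lambda$, noting that~\eqref{eq:regularized_cordes} yields exactly the Cordes inequality~\eqref{eq:HJB_existence_2} with the same $\varepsilon$, so that $\norm{u_\lambda}_{H^2(\Omega)}\leq C(1+\norm{F[m_\lambda]}_{L^2(\Omega)})$ with $C$ independent of $\lambda$. (Alternatively, one may use $|H_\lambda(x,M)|\le|H(x,M)|+\omega(1)$ from~\eqref{eq:H_reg_conv} to transfer bounds to the original Hamiltonian.)

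\textit{Step 2: Extraction of limits.} From the uniform bounds, choose a sequence $\lambda_j\to 0$ such that $m_{\lambda_j}\rightharpoonup m$ weakly in $L^2(\Omega)$, with $m\in L^2_+(\Omega)$ by Mazur's theorem since each $m_{\lambda_j}\ge 0$. Complete continuity of $F$ gives $F[m_{\lambda_j}]\to F[m]$ strongly in $L^2(\Omega)$. By the uniform $H^2$ bound and reflexivity, extract a further subsequence with $u_{\lambda_j}\rightharpoonup u$ in $V$; I then claim $u_{\lambda_j}\to u$ strongly in $V$ and $H[u]=F[m]$. To see this, let $\widetilde u\in V$ be the unique strong solution of $H[\widetilde u]=F[m]$ from Lemma~\ref{lemma: HJB existence}. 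Using~\eqref{eq:H_reg_conv}, $\norm{H[u_{\lambda_j}]-H_{\lambda_j}[u_{\lambda_j}]}_{L^\infty(\Omega)}\le\omega(\lambda_j)\to 0$, so $H[u_{\lambda_j}]=H_{\lambda_j}[u_{\lambda_j}]+(\text{error})=F[m_{\lambda_j}]+(\text{error})\to F[m]=H[\widetilde u]$ strongly in $L^2(\Omega)$; the Lipschitz stability estimate~\eqref{eqn: HJB stability} then forces $u_{\lambda_j}\to\widetilde u$ strongly in $V$, so $u=\widetilde u$ and~\eqref{eq:HJB_strong_form_2} holds.

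\textit{Step 3: Passing to the limit in the KFP equation via the VI.} This is where the VI formulation is essential. Since $H_{\lambda_j}$ is differentiable and convex in $M$, subgradient monotonicity gives, for every $v\in V$,
\begin{equation*}
\frac{\partial H_{\lambda_j}}{\partial M}(x,D^2u_{\lambda_j}(x)):D^2(v-u_{\lambda_j})(x)\le H_{\lambda_j}[v](x)-H_{\lambda_j}[u_{\lambda_j}](x)\quad\text{a.e.}
\end{equation*}
Multiplying by $m_{\lambda_j}\ge 0$, integrating, and using~\eqref{eqn: regularized MFG2} yields the regularized VI
\begin{equation*}
\pair{G}{v-u_{\lambda_j}}_{V^*\times V}\le\bigl(m_{\lambda_j},\,H_{\lambda_j}[v]-H_{\lambda_j}[u_{\lambda_j}]\bigr)_\Omega\quad\forall v\in V.
\end{equation*}
Now pass $j\to\infty$: the left side converges since $u_{\lambda_j}\to u$ in $V$; on the right, $H_{\lambda_j}[v]\to H[v]$ uniformly (hence in $L^2$) by~\eqref{eq:H_reg_conv}, $H_{\lambda_j}[u_{\lambda_j}]\to H[u]$ strongly in $L^2$ (from $\norm{H_{\lambda_j}[u_{\lambda_j}]-H[u_{\lambda_j}]}_{L^\infty}\le\omega(\lambda_j)$ and Lipschitz continuity of $H[\cdot]$ on $V$ together with $u_{\lambda_j}\to u$), and $m_{\lambda_j}\rightharpoonup m$ weakly, so the product of a weakly convergent and a strongly convergent sequence converges. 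This gives~\eqref{eq:KFP_VI}, so $(u,m)$ solves the VI problem~\eqref{eq:MFG_VI}. By Corollary~\ref{cor:PDI_VI_equivalence_complete}, $(u,m)$ is equivalently a PDI solution.

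\textit{Main obstacle.} The delicate point is the convergence in the KFP term: naively one would want to pass to the limit in $m_{\lambda_j}\frac{\partial H_{\lambda_j}}{\partial M}(x,D^2u_{\lambda_j})$, but $\frac{\partial H_{\lambda_j}}{\partial M}(x,D^2u_{\lambda_j})$ need only converge weak-$*$ in $L^\infty$, and $m_{\lambda_j}$ only converges weakly, so the product has no obvious limit (cf.\ Remark~\ref{rem:no_strong_continuity}). The VI reformulation circumvents this entirely by replacing the problematic product $m_{\lambda_j}\overline{a}_{\lambda_j}:D^2v$ with the scalar pairing $(m_{\lambda_j},H_{\lambda_j}[v]-H_{\lambda_j}[u_{\lambda_j}])_\Omega$, in which only one factor is weakly convergent and the other is strongly convergent; this is precisely the flexibility advertised in the introduction. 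A secondary technical care is ensuring the various convergences of $H_{\lambda_j}[u_{\lambda_j}]$ and of the a priori bounds are genuinely uniform in $\lambda$, which relies on the Cordes condition~\eqref{eq:regularized_cordes} being imposed with the same $\varepsilon$ for the whole family.
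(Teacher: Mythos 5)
Your proof is correct and follows essentially the same route as the paper: extract a weakly convergent subsequence of densities via the uniform Cordes-based $L^2$ bound, pin down the limit value function through the HJB stability estimate~\eqref{eqn: HJB stability} combined with the uniform error~\eqref{eq:H_reg_conv}, and then pass to the limit in the regularized VI, using that $(m_{\lambda_j}, H_{\lambda_j}[v]-H_{\lambda_j}[u_{\lambda_j}])_\Omega$ is a weak--strong pairing. The only cosmetic differences are that you first extract a redundant weak limit of $u_{\lambda_j}$ before identifying it, and you spell out (via convexity of $H_{\lambda_j}$ and nonnegativity of $m_{\lambda_j}$) how the regularized PDE implies the regularized VI, which the paper instead absorbs into its appeal to the existence theorem for the regularized system.
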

\begin{proof}
\begin{subequations}
By Theorem~\ref{thm: MFG existence}, for each~$\lambda\in(0,1]$, there exists a pair $(u_\lambda, m_\lambda) \in V \times L^2_+(\Omega)$  that solves 
\begin{alignat}{2}
H_{\lambda}[u_{\lambda}] &= F[m_{\lambda}] &\quad&\text{a.e. in } \Omega, \label{eqn: regularization pf1}
\\\angled{G, v-u_{\lambda}}_{V^* \times V} &\leq \left( m_{\lambda},  H_{\lambda}[v] - H_{\lambda}[u_{\lambda}]\right)_{\Omega} &\quad& \forall v \in V. \label{eqn: regularization pf2}
\end{alignat}
\end{subequations}
Moreover, the uniform bounds in~\eqref{eq:solution_boundedness} imply that $\{m_\lambda\}_{\lambda \in (0,1]}$ is uniformly bounded in $L^2(\Omega)$.
Hence, there exists a~${m} \in L^2(\Omega)$ and a sequence~$\{\lambda_j\}_{j \in \mbb{N}}$, with~$\lambda_j \rightarrow 0$ as $j \rightarrow \infty$, such that~$m_{\lambda_j} \rightharpoonup {m}$ in $L^2(\Omega)$.
Mazur's theorem implies that~${m} \geq 0$ a.e.\ in~$\Omega$.
Let $u \in V$ denote the unique solution of $H[u] = F[m]$ a.e.\ in $\Omega$, which exists by Lemma~\ref{lemma: HJB existence}.
To show that the $u_{\lambda_j}$ converge strongly to $u$ in~$V$, we may rewrite~\eqref{eqn: regularization pf1} for $\lambda=\lambda_j$ as
\[ H[u_{\lambda_j}] = F[m_{\lambda_j}] + H[u_{\lambda_j}] - H_{\lambda_j}[u_{\lambda_j}], \]
and hence we may use Lemma~\ref{lemma: HJB existence} to find that
\begin{equation}
\|{u} - u_{\lambda_j}\|_{H^2(\Omega)} \leq C \left( \|F[{m}] - F[m_{\lambda_j}]\|_{L^2(\Omega)} + \omega({\lambda_j}) \right),
\end{equation}
where we have also used~\eqref{eq:H_reg_conv}.
Thus, by complete continuity of~$F$, we find that $u_{\lambda_j} \rightarrow {u}$ strongly in $V$ as~$j \rightarrow \infty$.
We now show how to pass to the limit in~\eqref{eqn: regularization pf2} to deduce that the pair~$({u},{m}) \in V \times L_+^2(\Omega)$ solves~\eqref{eq:MFG_VI}.
Indeed, for any test function $v \in V$, we may write
\begin{multline*}
\left( m_{\lambda_j}, H_{\lambda_j}[v] - H_{\lambda_j}[u_{\lambda_j}] \right)_\Omega = \left( m_{\lambda_j}, H[v] - H[{u}] \right)_\Omega 
\\ + \left( m_{\lambda_j}, H_{\lambda_j}[v] - H[v] \right)_\Omega + \left(m_{\lambda_j}, H[{u}] - H_{\lambda_j}[u_{\lambda_j}] \right)_\Omega,
\end{multline*}
and we note that the last two terms on the right-hand side in the equation above vanish in the limit as $j \rightarrow \infty$, since the $\|m_{\lambda_j}\|_{L^2(\Omega)}$ are uniformly bounded, and since $\|H_{\lambda_j}[v] - H[v] \|_{L^2(\Omega)} $ and $\|H[{u}] - H_{\lambda_j}[u_{\lambda_j}]\|_{L^2(\Omega)}$ vanish in the limit by~\eqref{eq:H_reg_conv} and by $H_{\lambda_j}[u_{\lambda_j}]\to H[u]$ in $L^2(\Omega)$.
Therefore, one may pass to the limit in~\eqref{eqn: regularization pf2} to find that $ \angled{G, v - {u}}_{V^* \times V} \leq \left({m}, H[v] - H[{u}] \right)_\Omega$ for all $v \in V$. This shows that the pair~$(u, m) \in V \times L^2_+(\Omega)$ is a solution of the VI problem~\eqref{eq:MFG_VI}, and, equivalently, of the PDI problem~\eqref{eq:MFG_PDI} by Theorem~\ref{thm:PDI_VI_equivalence}.
\end{proof}

\begin{remark}
\label{remark: regularization whole sequence}
In Theorem~\ref{thm: regularization convergence}, there is no assumption of strict monotonicity on $F$, hence solutions of both the original and regularized problems may be nonunique in general.
This is why convergence is only shown up to subsequences.
However, if one adds the assumption of strict monotonicity of~$F$, then, by Theorem~\ref{thm:MFG_uniqueness}, uniqueness of solutions holds for both the original and regularized problems, and convergence of the whole sequence is recovered.
\end{remark}

\begin{remark}
It is interesting to contemplate if one can also prove Theorem~\ref{thm: regularization convergence} without using the VI formulation of the problem.
The difficulty is that there are examples showing that, generally, the sequence of functions $\frac{\partial H_{\lambda_j}}{\partial M}(\cdot,D^2u_{\lambda_j})$ need not contain any strongly converging subsequence, in any $L^p$ norm with $p\geq 1$, to some element of $\mathcal{D}H[u]$, even when $u_{\lambda_j}\to v$ in $V$.
This difficulty is similar to the one mentioned in Remark~\ref{rem:no_strong_continuity} above.
It is then hard to pass to limits directly in the KFP equation~\eqref{eqn: regularized MFG2}.
This further highlights the benefit of working with the VI formulation of the problem with regards to passages to the limit.
\end{remark}

\subsection{Perturbations of~$G$}

In the previous subsection, we considered the effect of regularization of the Hamiltonian. One can also consider perturbations of the other problem data.
To illustrate this, we end this section with a short proof of continuous dependence of solutions, up to subsequences, on the datum~$G$.

\begin{proposition}\label{proposition: cts dependence G}
Let~$\Omega \subset \R^d$ be an open, bounded, convex domain.
Let $\mathcal{A}$ be a compact metric space, let ~$a \in C(\overline{\Omega} \times \mathcal{A})$ satisfy Assumption~\ref{assumption: Cordes}, and let $f \in C(\overline{\Omega}\times\mathcal{A})$.
Let $F \colon L^2(\Omega) \rightarrow L^2(\Omega)$ be completely continuous.
Let $G \in V^*$ be nonnegative in the sense of distributions.
Let $\{G_n\}_{n \in \N}$ be a sequence in $V^*$ that is weak-$*$ converging to $G$ in $V^*$.
For each $n\in \N$, let $(u_n, m_n) \in V \times L^2_+(\Omega)$ be a solutions of the VI problem
\begin{subequations}\label{eq:G_perturb_MFG}
\begin{alignat}{2}
H[u_n]&= F[m_n] &\quad& \text{in } \Omega, \label{eq:G_perturb_HJB}
\\ \pair{G_n}{v-u_n}_{V^*\times V} &\leq \left(m_n, H[v]-H[u_n]\right)_\Omega &\quad& \forall v \in V. \label{eq:G_perturb_KFP} 
\end{alignat}
\end{subequations}
Then there exists a solution $(u,m)\in V \times L^2_+(\Omega)$ of the VI problem~\eqref{eq:MFG_VI}, and a subsequence $\{(u_{n_j},m_{n_j})\}_{j\in\N}$ such that $u_{n_j} \rightarrow u$ strongly in $V$, and $m_{n_j} \rightharpoonup m$ weakly in $L^2(\Omega)$, as $j \rightarrow \infty$.
\end{proposition}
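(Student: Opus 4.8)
The plan is to follow the same strategy as the proof of Theorem~\ref{thm: regularization convergence}, taking advantage of the fact that the VI formulation allows one to pass to the limit using only weak convergence of the densities.

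First I would establish uniform bounds on the perturbed solutions. Since $G_n \overset{*}{\rightharpoonup} G$ in $V^*$, the Banach--Steinhaus theorem gives $\sup_{n\in\N}\norm{G_n}_{V^*}<\infty$. Each $(u_n,m_n)$ is a VI solution, hence, by Theorem~\ref{thm:PDI_VI_equivalence}, also a PDI solution: there is an $-\overline{a}_n\in\mathcal{D}H[u_n]$ with $(m_n,-\overline{a}_n:D^2v)_\Omega=\pair{G_n}{v}_{V^*\times V}$ for all $v\in V$. Lemma~\ref{lem:Cordes_subdifferential} shows $\overline{a}_n(x)\in\Cclass$ for a.e.\ $x\in\Omega$, and then Lemma~\ref{lem: KFP well posed} yields $\norm{m_n}_{L^2(\Omega)}\leq C\norm{G_n}_{V^*}$ with $C$ independent of $n$. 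Thus $\{m_n\}$ is bounded in $L^2(\Omega)$, so along a subsequence $m_{n_j}\rightharpoonup m$ weakly in $L^2(\Omega)$ for some $m\in L^2(\Omega)$, and Mazur's theorem gives $m\geq 0$ a.e.\ in $\Omega$. I would then let $u\in V$ be the unique strong solution of $H[u]=F[m]$ from Lemma~\ref{lemma: HJB existence}. Strong convergence $u_{n_j}\to u$ in $V$ follows exactly as in Theorem~\ref{thm: regularization convergence}: from $H[u_{n_j}]=F[m_{n_j}]$ and the stability bound~\eqref{eqn: HJB stability} one gets $\norm{u-u_{n_j}}_{H^2(\Omega)}\leq C\norm{F[m]-F[m_{n_j}]}_{L^2(\Omega)}\to 0$ by complete continuity of $F$; Lipschitz continuity of $H[\cdot]\colon V\to L^2(\Omega)$ then also gives $H[u_{n_j}]\to H[u]$ in $L^2(\Omega)$.

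Next I would pass to the limit in~\eqref{eq:G_perturb_KFP} for $n=n_j$. On the left-hand side, $\pair{G_{n_j}}{v}_{V^*\times V}\to\pair{G}{v}_{V^*\times V}$ by weak-$*$ convergence, while
\[
\abs{\pair{G_{n_j}}{u_{n_j}}_{V^*\times V}-\pair{G}{u}_{V^*\times V}}\leq \norm{G_{n_j}}_{V^*}\,\norm{u_{n_j}-u}_{V}+\abs{\pair{G_{n_j}-G}{u}_{V^*\times V}}\to 0,
\]
using the uniform bound on $\norm{G_{n_j}}_{V^*}$, the strong convergence of $u_{n_j}$, and the weak-$*$ convergence $G_{n_j}\to G$ tested against the fixed element $u$. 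On the right-hand side, splitting $(m_{n_j},H[v]-H[u_{n_j}])_\Omega=(m_{n_j},H[v]-H[u])_\Omega+(m_{n_j},H[u]-H[u_{n_j}])_\Omega$, the first term converges to $(m,H[v]-H[u])_\Omega$ by weak convergence of $m_{n_j}$ against the fixed $L^2$ function $H[v]-H[u]$, and the second term vanishes since $\{m_{n_j}\}$ is bounded in $L^2(\Omega)$ and $H[u]-H[u_{n_j}]\to 0$ in $L^2(\Omega)$. This yields $\pair{G}{v-u}_{V^*\times V}\leq(m,H[v]-H[u])_\Omega$ for all $v\in V$, which, together with $H[u]=F[m]$ and $m\in L^2_+(\Omega)$, shows that $(u,m)$ solves the VI problem~\eqref{eq:MFG_VI} (and, by Theorem~\ref{thm:PDI_VI_equivalence}, equivalently the PDI problem).

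Everything here is routine except the treatment of the duality pairing $\pair{G_{n_j}}{u_{n_j}}_{V^*\times V}$, where neither factor is fixed: handling it requires combining the weak-$*$ convergence of the $G_{n_j}$ with the strong convergence of the $u_{n_j}$ and the Banach--Steinhaus uniform bound on $\norm{G_{n_j}}_{V^*}$. This is exactly the point where the VI formulation pays off, since it is what makes weak convergence of the densities alone sufficient to pass to the limit.
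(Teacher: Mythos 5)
Your proof is correct and follows essentially the same strategy as the paper's: Banach--Steinhaus for the uniform bound on $\{G_n\}$, the KFP \emph{a priori} estimate for uniform $L^2$-bounds on $\{m_n\}$, Mazur's theorem for nonnegativity of the weak limit, the HJB stability bound~\eqref{eqn: HJB stability} together with complete continuity of $F$ for the strong convergence of $u_{n_j}$, and then passage to the limit in the VI. The paper's proof is terser about the limit passage in the pairing $\pair{G_{n_j}}{u_{n_j}}_{V^*\times V}$; your explicit splitting into a uniform-bound-times-strong-convergence term and a weak-$*$-against-fixed-element term makes precise exactly what the paper gestures at.
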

\begin{proof}
Firstly, since we have that~$G_n \overset{*}{\rightharpoonup} G$ in~$V^*$, one finds that~$\{G_n\}_{n \in \mbb{N}}$ is uniformly bounded in $V^*$.
Hence by using~\eqref{eqn: KFP L2 control} we find that the sequence~$\{m_n\}_{n \in \mbb{N}}$ is uniformly bounded in $L^2(\Omega)$, and thus there exists a weakly converging subsequence~$\{m_{n_j}\}_{j \in \mbb{N}}$ such that~$m_{n_j} \rightharpoonup {m}$ in~$L^2(\Omega)$ as $j \rightarrow \infty$.
By Mazur's theorem we again see that $m\geq 0$ almost everywhere in $\Omega$.
If we now define~$u \in V$ to be the unique solution to $H[u] = F[m]$, then, from~\eqref{eqn: HJB stability} and the complete continuity of~$F$, one finds that $u_{n_j} \rightarrow u$ strongly in $V$ as $j \rightarrow \infty$.
By using the weak-$*$ convergence~$G_{n_j} \overset{*}{\rightharpoonup} G$ in $V^*$, the strong convergence~$u_{n_j} \rightarrow u$ in $V$, and the weak convergence $m_{n_j} \rightharpoonup m$ in $L^2(\Omega)$, we may pass to the limit $j \rightarrow \infty$ in~\eqref{eq:G_perturb_KFP}.
Recalling the definition of~$u$, we then conclude that $(u, m) \in V \times L^2_+(\Omega)$ is a solution to the VI problem~\eqref{eq:MFG_VI}.
\end{proof}

Similar to before, if the operator $F$ is additionally assumed to be strictly monotone, then the conclusion of Proposition~\ref{proposition: cts dependence G} above can be strengthened to convergence of the whole sequence.

\section*{Conclusion}
We have shown the existence of solutions of fully nonlinear second-order MFG, on general bounded convex domains, with uniformly elliptic coefficients satisfying the Cordes condition.
Solutions are additionally unique if the coupling is additionally strictly monotone.
The key challenges addressed include the possible nondifferentiability of the Hamiltonian, the lack of smoothness of the domain boundary, the dependence of the Hamiltonian on the whole Hessian of the value function, and also the absence of any potential structure for the system.
We have also obtained results on the continuous dependence of the solution, where we have shown that the MFG system for the problem with nondifferentiable Hamiltonian arise as limits of problems with differentiable Hamiltonians. 
In so doing, we have obtained several results of independent interest play a role in the analysis, such as the original comparison principle for nondivergence-form PDE with discontinuous coefficients, and their adjoint KFP equations.

A key original ingredient of our approach is the nonstandard variational inequality formulation of the problem, which was shown to be equivalent to the partial differential inclusion formulation.
We expect that the approach based on the equivalence of PDI and VI formulations will continue to provide useful tools in the analysis of a wide range of problems in mean field games, since it enables passages to the limits under very broad circumstances.
Possible further directions for investigation include problems with lower order terms, time-dependent problems, the case of nonseparable Hamiltonians, first-order problems, as well as the development of numerical methods.

\section*{Acknowledgements}
Thomas~Sales was supported by the Engineering and Physical Sciences Research Council [grant number EP/Z535138/1].
Iain~Smears was supported by the Engineering and Physical Sciences Research Council [grant number EP/Y008758/1].
We thank T.~Sprekeler and E.~S\"uli for bringing to our attention the reference~\cite{Kadlec1964}, along with its translation.

\appendix
\section{Comparison principle for nondivergence form elliptic operators with Cordes coefficients}
\label{appendix: comparison principle}

We now prove the comparison principle for nondivergence form elliptic equations with Cordes coefficients, as detailed in~Theorem~\ref{thm:comparison_nondivergence}.
\begin{proof}[Proof of Theorem~\ref{thm:comparison_nondivergence}]
We prove the result by a regularization argument.
We first extend $\overline{a}$ from $\Omega$ to the whole of~$\R^d$ by setting $\overline{a}|_{\R^d\setminus \Omega} \coloneqq \overline{\nu}I_d$ where $I_d$ denotes the $d\times d$ identity matrix. It is clear that $\overline{\nu}I_d\in \Cclass$, so $\overline{a}(x)\in \Cclass$ for almost every $x\in \R^d$.
For each $\rho>0$, let $\phi_\rho \in C^\infty_0(\R^d)$ denote the standard mollifier supported in the closed ball of radius $\rho$. 
Since the standard mollifier $\phi_\rho$ is nonnegative with $\int_{\R^d}\phi_\rho(y)\,\mathrm{d}y=1$, and since $\Cclass$ is closed and convex, it follows that the mollification $\overline{a}_{\rho}$, which is defined by $\overline{a}_{\rho}(x) \coloneqq \int_{\R^d} \overline{a}(y) \phi_\rho(x-y) \,\mathrm{d}y$ for each $x\in\R^d$, also satisfies
\begin{equation}\label{eq:comparison_nondivergence_5}
\overline{a}_{\rho}(x) \in \Cclass \quad \forall x\in \R^d,
\end{equation}
since mollification preserves convexity of sets.
We will also regularize the domain $\Omega$ by using the following result from \cite[Theorem~A.1]{SmearsThesis}: for each $\delta >0$, there exists an open convex set $\Omega_{\delta}$ with $C^{1,1}$ boundary such that $\Omega_\delta \subset \Omega$ and $\mathrm{dist}(\Omega,\Omega_\delta)<\delta$, where
\begin{equation}
\mathrm{dist}(\Omega,\Omega_\delta) \coloneqq \sup_{x\in\Omega}\inf_{y\in\Omega_{\delta}}|x-y|+\sup_{y\in\Omega_{\delta}}\inf_{x\in\Omega}|x-y|.
\end{equation}
Also, for each $\delta>0$, we can find a $\rho_\delta>0$ so that $\norm{\overline{a}-\overline{a}_{\rho_\delta}}_{L^2(\Omega;\R^{d\times d})}<\delta$.
We also choose $g_\delta \in L^d(\Omega)$ to be a nonnegative approximation of~$g$ that satisfies $\norm{g-g_\delta}_{L^2(\Omega)}<\delta$.
Consider now, for each $\delta>0$, the regularized problem
\begin{equation}\label{eq:comparison_nondivergence_1}
\begin{aligned}
 - \overline{a}_{\rho_\delta}:D^2v_\delta &= g_\delta && \text{in } \Omega_\delta, 
 \\ v_\delta &=0 &&\text{on }\partial\Omega_\delta,
\end{aligned}
\end{equation}
This is a case of a nondivergence form elliptic equation on a bounded domain with $C^{1,1}$ boundary with $\overline{a}_{\rho_\delta}\in C(\overline{\Omega_\delta})$ and $g_\delta \in L^d(\Omega)$.
Therefore, by the Cald\'eron--Zygmund theory of nondivergence elliptic equations, see \cite[Theorem~9.15]{GilbargTrudinger2001}, we deduce that, for each $\delta>0$, there exists a unique $v_\delta\in W^{2,d}(\Omega_\delta)\cap W^{1,d}_0(\Omega_\delta)$ that solves~\eqref{eq:comparison_nondivergence_1}.
Recall that $W^{2,d}(\Omega_\delta)$ is continuously embedded in~$C(\overline{\Omega_\delta})$.
Since $g_\delta$ is nonnegative a.e.\ in $\Omega$, and hence also in $\Omega_\delta$, it follows from the Alexandrov--Bakelman--Pucci maximum principle~\cite[Theorem~9.1]{GilbargTrudinger2001} that $v_{\delta}$ is nonnegative a.e.\ in $\Omega_\delta$.

Since $\overline{a}_{\rho_\delta}(x) \in \Cclass$ for all $x\in\Omega_\delta$ and since the functions $g_\delta$ form a uniformly bounded subset of $L^2(\Omega)$, and since $\Omega_\delta$ is convex, \cite[Theorem~3]{SmearsSuli2013} shows that there exists a constant $C$, independent of $\delta$, such that
\begin{equation}\label{eq:comparison_nondivergence_2}
\norm{v_\delta}_{H^2(\Omega_\delta)} \leq C \quad\forall\delta>0.
\end{equation}
Observe that in the statement of \cite[Theorem~3]{SmearsSuli2013}, the constant of the a priori bound is allowed to depend on the diameter of the domain, namely~$\diam(\Omega_\delta)$ in the current context, yet by tracing its explicit form in the proof, it is found that it can be made independent of $\delta$ since $\diam(\Omega_\delta)\leq \diam(\Omega)$ for all $\delta>0$.
Let $\overline{v}_{\delta}$ denote the zero-extension of $v_\delta$ to the whole of $\Omega$. It follows from $v_\delta \in H^1_0(\Omega_\delta)$ that $\overline{v}_\delta \in H^1_0(\Omega)$ for each $\delta>0$ (see for instance~\cite[Chapter 5]{AdamsFournier03}), and also from~\eqref{eq:comparison_nondivergence_2} that the $\overline{v}_\delta$ are uniformly bounded in $H^1_0(\Omega)$.
Furthermore, the $\overline{v}_\delta$ are nonnegative almost everywhere in $\Omega$ for all $\delta>0$.
Let $M_{\delta} \in L^2(\Omega;\Rddsym)$ denote the zero-extension of $D^2 v_{\delta}$ to $\Omega$. It follows that the $M_\delta$ are also bounded in $L^2(\Omega;\Rddsym)$ uniformly w.r.t.\ $\delta$.
Then, after possibly passing to a subsequence, we may assume, without loss of generality, that there exists a $\overline v \in H^1_0(\Omega)$ and a $M\in L^2(\Omega;\Rddsym)$ such that $\overline{v}_\delta \rightharpoonup \overline{v}$ in $H^1_0(\Omega)$ and $M_\delta \rightharpoonup M$ in $L^2(\Omega;\Rddsym)$ as $\delta\to 0$. 
We claim that $\overline{v}=v$ is the unique strong solution in $V$ of
\begin{equation}\label{eq:comparison_nondivergence_0}
\begin{aligned}
 - \overline{a}:D^2v &= g && \text{in } \Omega, 
 \\ v &=0 &&\text{on }\partial\Omega.
\end{aligned}
\end{equation}
To verify this, let $\psi \in C^\infty_0(\Omega)$ be arbitrary, and let $K\coloneqq \supp \psi$ denote its support. Since $K$ is a compact subset of $\Omega$ and since $\mathrm{dist}(\Omega,\Omega_\delta)<\delta$, with $\Omega_\delta$ and $\Omega$ convex, it follows that, for all~$\delta$ sufficiently small, we have $K\subset \Omega_\delta \subset \Omega$, cf.~\cite[Lemma A.4]{SmearsThesis}. This implies that, for any $i,j\in \{1,\ldots,d\}$, we have
\begin{multline}\label{eq:comparison_nondivergence_3}
\int_\Omega M_{ij}  \psi \,\dx 
= \lim_{\delta\to 0} \int_\Omega \left(M_\delta\right)_{ij}  \psi \, \dx  = \lim_{\delta\to 0} \int_{\Omega_{\delta}} \partial_{x_{i} x_{j}} v_{\delta} \psi \,\dx
\\ - \lim_{\delta \rightarrow 0} \int_{\Omega_{\delta}} \partial_{x_i} v_\delta \partial_{x_j} \psi \, \dx = - \lim_{\delta\to 0} \int_\Omega \partial_{x_i} \overline{v}_\delta \,\partial_{x_j}\,\psi \,\dx = - \int_\Omega \partial_{x_i} \overline{v} \,\partial_{x_j} \psi \,\dx,
\end{multline}
where, in the identities above, we have first used the weak convergence $M_\delta\rightharpoonup M$, followed by the fact that $M_\delta=D^2 v_{\delta}$ on $K=\supp \psi$ for all $\delta$ sufficiently small, and finally we have used the weak convergence $\overline{v}_{\delta}\rightharpoonup \overline{v}$ in $H^1_0(\Omega)$ as $\delta\to 0$.
Since $\psi$ was arbitrary, the equation~\eqref{eq:comparison_nondivergence_2} implies that $\overline{v}\in H^2(\Omega)$ and that $D^2 \overline{v} = M$ in $\Omega$.
This shows that $\overline{v}\in V=H^2(\Omega)\cap H^1_0(\Omega)$.
Then, it follows from~\eqref{eq:comparison_nondivergence_1}, from the strong convergence $\overline{a}_{\rho_\delta}\to \overline{a}$ in $L^2(\Omega)$, and $g_\delta\to g$ in $L^2(\Omega)$ as $\delta\to 0$, that
\[
\left( - \overline{a}:D^2 \overline{v},\psi\right)_\Omega = \lim_{\delta\to 0} (-\overline{a}_{\rho_\delta}:M_\delta, \psi)_\Omega = 
\lim_{\delta\to 0}( -\overline{a}_{\rho_\delta}:D^2v_{\delta}, \psi)_\Omega = \lim_{\delta\to 0} (g_\delta,\psi)_\Omega = (g,\psi)_\Omega,
\]
where we use again the fact that $K\subset \Omega_\delta$ for all $\delta$ sufficiently small.
Notice that, in the first equality above, we use the weak convergence $M_\delta\rightharpoonup M=D^2\overline{v}$ in $L^2(\Omega; \Rddsym)$ along with the strong convergence $\overline{a}_{\rho_\delta}\to \overline{a}$ in $L^2(\Omega;\R^{d\times d})$ as $\delta\to 0$, and in the last equality we use the convergence $g_\delta\to g$ in $L^2(\Omega)$ as $\delta \to 0$.
Since $\psi$ above was arbitrary, we conclude that $\overline{v}\in H^2(\Omega)\cap H^1_0(\Omega)$ solves $-\overline{a}:D^2 \overline{v} = g$ pointwise a.e.\ in $\Omega$. Therefore $\overline{v}=v$ is the unique solution of~\eqref{eq:comparison_nondivergence_0}.
To conclude the proof, recall that $\overline{v}_\delta\geq 0$ a.e.\ in $\Omega$ for all $\delta>0$, so the convergence $\overline{v}_\delta\rightharpoonup v$ as $\delta\to 0$ implies that $v\geq 0$ a.e.\ in $\Omega$ by Mazur's theorem.
\end{proof}

\bibliographystyle{siamplain_NoURL}

\bibliography{references}

\end{document}